
\documentclass{amsart}

\usepackage{graphicx,amssymb,amsmath,xcolor,enumerate}
\usepackage{hyperref}
\hypersetup{
pdffitwindow=false,     
pdfstartview={FitH},    
colorlinks=true,      
citecolor=red,
}

\usepackage{enumitem}

\usepackage{float}

\setcounter{tocdepth}{3}
\makeatletter
\def\l@subsection{\@tocline{2}{0pt}{2.5pc}{5pc}{}}
\makeatother

\usepackage{underscore}

\usepackage{geometry}
\geometry{left=3.5cm,right=3.5cm,top=3cm,bottom=3cm}

\DeclareMathOperator{\p}{p.v.}

\DeclareSymbolFont{largesymbol}{OMX}{yhex}{m}{n}
\DeclareMathAccent{\Widehat}{\mathord}{largesymbol}{"62}
\newcommand*\di{\mathop{}\!\mathrm{d}}
\newcommand{\occ}{\overline{\mathbb{C}_+}}

\numberwithin{equation}{section}              
\newtheorem{theorem}{Theorem}[section]

\newtheorem{lemma}{Lemma}[section]

\newtheorem*{proposition*}{Proposition}

\newtheorem*{corollary*}{Corollary}
\newtheorem{definition}{Definition}[section]
\newtheorem*{definitions*}{Definitions}

\newtheorem*{acknowledgements*}{Acknowledgements}

\newtheorem*{conjecture*}{\bf Conjecture}

\newtheorem*{example*}{\bf Example}
\theoremstyle{remark}
\newtheorem{remark}{\bf Remark}[section]

\begin{document}
\date{}                                     

\author{Yu Gao}
\address[Y. Gao]{Department of Applied Mathematics, The Hong Kong Polytechnic University, Hung Hom, Kowloon, Hong Kong}
\email{mathyu.gao@polyu.edu.hk}
\author{Cong Wang}
\address[C. Wang]{Department of Mathematics, Harbin Institute of Technology,  Harbin,	150001, P.R. China.}
\email{math_congwang@163.com}
\author{Xiaoping Xue}
\address[X. Xue]{Department of Mathematics, Harbin Institute of Technology, Harbin,	150001, P.R. China.}
\email{xiaopingxue@hit.edu.cn}

\title[Weak and spatial analytic solutions]{Global existence and spatial analyticity for a nonlocal flux with fractional diffusion}

\begin{abstract}
In this paper, we study a one dimensional nonlinear equation with diffusion  $-\nu(-\partial_{xx})^{\frac{\alpha}{2}}$ for $0\leq \alpha\leq 2$ and $\nu>0$. We use a viscous-splitting algorithm to obtain global nonnegative weak solutions in space $L^1(\mathbb{R})\cap H^{1/2}(\mathbb{R})$ when $0\leq\alpha\leq 2$. For subcritical $1<\alpha\leq 2$ and critical case $\alpha=1$,  we obtain global existence and uniqueness of nonnegative spatial analytic solutions. We use a fractional bootstrap method to improve the regularity of mild solutions in Bessel potential spaces for subcritical case $1<\alpha\leq 2$. Then, we show that the solutions are spatial analytic and can be extended globally. For the critical case $\alpha=1$, if the initial data $\rho_0$ satisfies $-\nu<\inf\rho_0<0$, we use the characteristics methods for complex Burgers equation to obtain a unique spatial analytic solution to our target equation in some bounded time interval. If $\rho_0\geq0$, the  solution exists globally and converges to steady state. 
\end{abstract}

\maketitle

{
\hypersetup{linkcolor=blue}
\tableofcontents
}

\section{Introduction}
In this paper, we are going to study the following nonlinear partial differential equation on the real line $\mathbb{R}$:
\begin{equation}\label{eq:Hilbert}
\left\{\begin{aligned}
&\partial_t\rho+\partial_x[\rho (u-\gamma x)]=- \nu\Lambda^{\alpha}  \rho,\quad u=H\rho,\quad t>0,~~x\in\mathbb{R},\\
&\rho(x,0)=\rho_0(x),~~x\in\mathbb{R}
\end{aligned}
\right.
\end{equation}
with $\gamma\geq0$, $\nu>0$ and $0\leq \alpha\leq 2$.  The velocity field $H\rho$ stands for the Hilbert transform of $\rho$:
\[
(H\rho)(x,t):=\frac{1}{\pi}\p\int_{\mathbb{R}}\frac{\rho(y,t)}{x-y}\di y.
\]
Here $\nu$ is a positive number called the viscosity coefficient, and it controls the strength of the dissipation term. For $0\leq \alpha\leq 2$, the fractional Laplacian $\Lambda^\alpha  \rho=(-\partial_{xx})^{\frac{\alpha}{2}}\rho$  is defined by its Fourier transform:
\[
[\mathcal{F}( \Lambda^\alpha  \rho)](\xi,t) = |\xi|^\alpha[\mathcal{F}(\rho)](\xi,t).
\]
The parameter $\alpha$ also controls the magnitude of the dissipation term.  For $\nu>0$, let the kernel $G_\alpha$ be the fundamental solution of the linear operator $\partial_t+\nu\Lambda^\alpha$, and it is defined by:
\[
\mathcal{F}(G_\alpha)(\xi,t):=e^{-\nu t|\xi|^\alpha}.
\]
We call the case $\alpha>1$, $\alpha=1$, and $\alpha<1$ of  \eqref{eq:Hilbert}  as subcritical, critical, and supercritical cases respectively. 

When $\nu=0$, equation \eqref{eq:Hilbert} becomes
\begin{equation}\label{eq:meanfield}
\partial_t\rho+ \partial_x[\rho (u-\gamma x)]=0,~~u=H\rho,~~x\in\mathbb{R},~~t>0.
\end{equation}
Equation \eqref{eq:meanfield} is the mean field equation of the following Dyson Brownian motion  \cite{rogers1993interacting,cepa1997diffusing,berman2019propagation}:
\begin{align}\label{eq:DysonBrownian}
\di \lambda_j(t)=\frac{1}{\sqrt{N}}\di B_j(t)+\frac{1}{\pi N}\sum_{k\neq j}\frac{\di t}{\lambda_j(t)-\lambda_k(t)}-\gamma\lambda_j(t)\di t,\quad 1\leq j\leq N,
\end{align}
which describes the evolution of eigenvalues $\{\lambda_j\}_{j=1}^N$ of a $N\times N$ Hermitian matrix given by matrix valued  Ornstein-Uhlenbeck (OU) process \cite{dyson1962brownian,erdos2017dynamical,Tao2012Topics}. Next, we list three important aspects of equation \eqref{eq:meanfield}.

\emph{Space-time rescaling:} For equation \eqref{eq:meanfield}, an important fact is that the the linear term $-\gamma \partial_x(x\rho)$ with $\gamma>0$ can be reformulated into the case $\gamma=0$ by the following space-time rescaling:
\begin{align}\label{eq:transform}
\tilde{\rho}(y,\tau)\sqrt{1+2\gamma\tau}=\rho(x,t),\quad x=\frac{y}{\sqrt{1+2\gamma\tau}},\quad t=\frac{1}{2\gamma}\log(1+2\gamma\tau).
\end{align}
Then, if $\rho$ be a solution to \eqref{eq:meanfield}, $\tilde{\rho}$ is a solution to equation \eqref{eq:meanfield} with $\gamma=0$:
\begin{align}\label{eq:meanfieldB}
\partial_\tau\tilde{\rho}+\partial_y(\tilde{\rho}\tilde{u})=0,\quad \tilde{u}=H\tilde{\rho}.
\end{align}
The above transformation has the same effect for  equation \eqref{eq:Hilbert} with $\alpha=2$, but not for $0<\alpha<2$. 

\emph{Gradient flow structure:} Equation \eqref{eq:Hilbert} with $\alpha=2$ or $\nu=0$ has a gradient flow structure in the probability measure space with Wasserstein distance with respect to a free energy functional given by 
\begin{align}\label{eq:interactionEnergy}
E(\rho(\cdot,t))&=\frac{\gamma}{2}\int_{\mathbb{R}} x^2\rho(x,t) \di x-\frac{1}{2}\int_{\mathbb{R}}\int_{\mathbb{R}}\log|x-y|\rho(x,t)\rho(y,t)\di x\di y+\nu\int_{\mathbb{R}}\rho(x,t)\log\rho(x,t)\di x\nonumber\\
&=:E_{\textnormal{h}}(\rho(\cdot,t))+E_{\textnormal{i}}(\rho(\cdot,t))+E_{\textnormal{e}}(\rho(\cdot,t)).
\end{align}
Here $E_{\textnormal{h}}$ is a harmonic  trap  energy,  $E_{\textnormal{i}}$ is  an interaction energy, and   $E_{\textnormal{e}}$ is the entropy. 
Then, equation \eqref{eq:Hilbert} with $\alpha=2$ or $\nu=0$ is recast to
\begin{align}\label{eq:gradientflow}
\partial_t\rho-\partial_x\left[\rho\partial_x\left(\frac{\delta E}{\delta\rho}\right)\right]=0,\quad \frac{\delta E}{\delta\rho}=  \frac{\gamma}{2} x^2-\int_{\mathbb{R}}\log|x-y|\rho(y,t)\di y.
\end{align}
By the properties of this gradient flow structure, Carrillo et. al. \cite{carrillo2012mass} obtained the existence and uniqueness of global probability measure solutions. They also proved $\gamma$-convexity along Wasserstein geodesics of the energy and hence obtained exponential convergence to the steady state given by Wigner's semicircle law
\begin{align}\label{eq:semicirclelaw}
\mu(\di x)= \rho(x) \di x := \frac{\sqrt{(4-x^2)_+}}{2\pi}\di x.
\end{align}

\emph{Complex Burgers equation:} From equation \eqref{eq:meanfield}, if we define $g(x,t)=H\rho(x,t)-i\rho(x,t)-\gamma x$, then the analytical extention of $g$ on the upper half complex plane $\mathbb{C}_+:=\{z:\Im(z)=\mathrm{Im}(z)>0\}$ satisfies the following complex Burgers equation with a force term $\gamma^2z$ \cite{castro2008global,gao2020large}:
\begin{align}\label{eq:complexBurgers0}
\partial_tg+g\partial_zg=\gamma^2z,\qquad z\in\mathbb{C_+},~~t>0.
\end{align}
When $\gamma=0$, Castro and C\'ordoba \cite{castro2008global} proved global (in time) existence and uniqueness of spatial analytic solutions ($t>0$) to \eqref{eq:meanfield} with strictly positive initial data $0<\rho_0\in L^2(\mathbb{R})\cap C^{0,\delta}(\mathbb{R})$  via characteristics method for \eqref{eq:complexBurgers0}. However, if there is $x_0\in\mathbb{R}$ such that  $\rho(x_0)=0$, then the solution $ \rho$ will blow up in $H^s(\mathbb{R})$ for  $s>\frac{3}{2}$ in finite time \cite{castro2008global}. These two results hold also for $\gamma>0$ due to the rescaling \eqref{eq:transform}, and the  global solutions with $\rho_0>0$ converge to the steady state pointwisely \cite{gao2020large}. Global nonnegative weak solutions in $L^{\infty}(0,T;  L^1(\mathbb{R})\cap H^{\frac12}(\mathbb{R}))$ to \eqref{eq:meanfield} were also obtained \cite{gao2020large}. 

In this paper, we are going to study  equation \eqref{eq:Hilbert} with $0\leq \alpha\leq 2$. We first use a viscous-splitting algorithm (see, e.g. \cite[Chapter 3]{majda2002vorticity}) to obtain global weak solutions (see Definition \ref{def:weak1}) for the whole range  $0\leq \alpha\leq 2$. The following theorem is obtained:
\begin{theorem}\label{thm:weaktheorem}
Assume $0\leq \rho_0\in L^1(\mathbb{R})\cap H^{1/2}(\mathbb{R})$. Then, there exists a global nonnegative weak solution to \eqref{eq:Hilbert} satisfying
\[
\rho\in L^\infty(0,\infty; L^1(\mathbb{R})\cap H^{1/2}(\mathbb{R}))\cap W^{1,\infty}(0,\infty;H^{-3}(\mathbb{R})).
\]
Moreover, we have:
\begin{align}\label{eq:solutionestimate}
\|\rho(t)\|_{L^1}\leq \|\rho_0\|_{L^1},\quad \|\rho(t)\|_{H^{1/2}}\leq  \|\rho_0\|_{H^{1/2}},~~t>0.
\end{align}
\end{theorem}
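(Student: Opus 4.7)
The plan is to construct global nonnegative weak solutions via a Lie--Trotter-type viscous-splitting scheme, as in \cite[Chapter 3]{majda2002vorticity}, that alternates between the inviscid flow \eqref{eq:meanfield} and the pure fractional heat flow $\partial_t\rho=-\nu\Lambda^\alpha\rho$. Fix a time step $\Delta t>0$, set $\rho^{\Delta t}(0)=\rho_0$, and inductively on each interval $[n\Delta t,(n+1)\Delta t]$ first evolve $\rho^{\Delta t}(n\Delta t)$ for time $\Delta t$ under the transport equation, using the global nonnegative weak solutions from \cite{gao2020large}, and then convolve with the heat kernel $G_\alpha(\cdot,\Delta t)$ to complete the time step. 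Both substeps are mass-preserving and positivity-preserving (characteristics for the transport part, together with nonnegativity of $G_\alpha$ for $0\leq\alpha\leq 2$), so that uniformly in $\Delta t$ one has $\rho^{\Delta t}\geq 0$ and $\|\rho^{\Delta t}(t)\|_{L^1}\leq\|\rho_0\|_{L^1}$.

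The first key estimate is the $H^{1/2}$ bound. The transport step does not increase the $H^{1/2}$ norm for nonnegative data (by the energy identities used in \cite{gao2020large}), while the heat step is a contraction on every $H^s$ since its Fourier symbol $e^{-\nu t|\xi|^\alpha}$ is bounded by one. Combining the two substeps yields $\|\rho^{\Delta t}(t)\|_{H^{1/2}}\leq\|\rho_0\|_{H^{1/2}}$ uniformly in $\Delta t$ and $t$. Using the piecewise equations satisfied by $\rho^{\Delta t}$, I then deduce uniform bounds for $\partial_t\rho^{\Delta t}$ in $L^\infty(0,T;H^{-3}(\mathbb{R}))$: the nonlinear flux $\rho^{\Delta t}H\rho^{\Delta t}$ is bounded in $L^1\subset H^{-1}$ by the $L^2$-boundedness of $H$; the linear drift $\gamma x\rho^{\Delta t}$ is bounded in $H^{-3}$ by duality against $H^3$ test functions since $\rho^{\Delta t}\in L^1$; and $\Lambda^\alpha\rho^{\Delta t}\in H^{1/2-\alpha}\subset H^{-3}$.

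With uniform control in $L^\infty(0,T;H^{1/2})\cap W^{1,\infty}(0,T;H^{-3})$, the Aubin--Lions lemma combined with the compact embedding $H^{1/2}(\mathbb{R})\hookrightarrow L^2_{\mathrm{loc}}(\mathbb{R})$ delivers a subsequence $\rho^{\Delta t_k}$ that converges strongly in $C([0,T];L^2_{\mathrm{loc}})$ and weakly-$*$ in $L^\infty(0,T;H^{1/2})$ to some limit $\rho$. Testing the piecewise weak formulation against compactly supported smooth functions, the linear terms pass to the limit by weak-$*$ convergence; the nonlinear flux $\rho^{\Delta t_k}H\rho^{\Delta t_k}$ is handled by combining the strong $L^2_{\mathrm{loc}}$ convergence of $\rho^{\Delta t_k}$ with the weak $L^2$ convergence of $H\rho^{\Delta t_k}$ (itself a consequence of the uniform $L^2$ bound). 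A standard Lie--Trotter error estimate shows that the splitting error between the alternating scheme and the coupled equation vanishes as $\Delta t\to 0$. Lower semicontinuity of norms under these convergences preserves the estimates \eqref{eq:solutionestimate}, and the construction is carried out for arbitrary $T<\infty$, giving a global weak solution by a diagonal extraction.

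The main obstacle will be the passage to the limit in the nonlinear flux $\rho H\rho$. Since the Hilbert transform fails to be compact on $L^2(\mathbb{R})$, one really needs both strong local convergence of $\rho^{\Delta t_k}$ and tight uniform control of $H\rho^{\Delta t_k}$ in $L^2$. A secondary technical difficulty is quantifying the splitting error per subinterval so that it vanishes uniformly over $[0,T]$ rather than accumulating, which requires a careful stability estimate at the junctions $t=n\Delta t$, where $\rho^{\Delta t}$ has discontinuous time derivatives.
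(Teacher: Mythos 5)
Your overall strategy is the same as the paper's: Trotter-type viscous splitting between the inviscid Dyson flow and the fractional heat semigroup, uniform bounds in $L^1\cap H^{1/2}$ and on $\partial_t$ in $H^{-3}$, Aubin--Lions compactness, and passage to the limit in the weak formulation. However, there are two concrete gaps in the execution.

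First, your transport substep is not well defined as stated. You propose to evolve each iterate under \eqref{eq:meanfield} ``using the global nonnegative weak solutions from \cite{gao2020large}.'' For merely nonnegative data the inviscid equation has no known uniqueness of weak solutions (and strong solutions can blow up when the density touches zero), so ``the'' transport step is not a map and the iteration $[G_\alpha(h)D(h)]^n$ does not make sense. The paper resolves this by mollifying the initial datum, $\rho_{0,h}=\rho_0\ast\varphi_h>0$, so that every substep applies the \emph{unique global smooth positive} solution of the Dyson equation (which preserves strict positivity, conserves $L^1$, and does not increase $H^{1/2}$); the mollification parameter is then tied to the time step and removed in the limit. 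You need this (or an equivalent regularization) for your scheme to be a well-defined sequence of approximations.

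Second, your appeal to ``a standard Lie--Trotter error estimate'' to control the splitting error is the weak point you yourself flag, and it does not hold at this regularity: quantitative Trotter bounds require control of commutators of the two generators on the solution, which is unavailable for data only in $L^1\cap H^{1/2}$. The paper avoids any such estimate. It observes that the splitting approximation $\rho_h$ satisfies an \emph{exact} perturbed weak identity, \eqref{eq:approximateweak}, in which the test function $\partial_x\phi$ in the nonlinear term is replaced by $\partial_x[G_\alpha(s_t)\phi]$ with $s_t\to 0$ as $h\to 0$; since $\partial_x[G_\alpha(s_t)\phi]\to\partial_x\phi$ pointwise, the perturbation disappears in the limit and no splitting-error rate is ever needed. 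Relatedly, the paper's Definition \ref{def:weak1} uses the symmetrized kernel $\frac{\partial_x\phi(x)-\partial_x\phi(y)}{x-y}$, which is bounded by $\|\partial_{xx}\phi\|_{L^\infty}$, so the nonlinear term is continuous under strong $L^1_{\mathrm{loc}}$ convergence alone; your strong-$L^2_{\mathrm{loc}}$-times-weak-$L^2$ product argument for $\rho\,H\rho$ is also workable, but the symmetrized form is what makes both the $H^{-3}$ bound on $\partial_t\rho_h$ and the limit passage essentially effortless. I would restructure your limit argument along these lines rather than trying to prove a junction-by-junction stability estimate for the splitting error.
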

The reason to choose viscous-splitting algorithm is simple, because both fractional heat equation and equation \eqref{eq:meanfield} yield global analytic solutions preserving positivity and norms of $L^1(\mathbb{R})$ and $H^{1/2}(\mathbb{R})$ for positive initial data. Hence, we only need to use some compactness argument to derive global weak solutions (see Section \ref{sec:weak}). 
In papers \cite{bae2018global}, global weak solutions to the following general models were studied:
\begin{align}\label{eq:general}
\partial_t\rho+\partial_x\rho H\rho+\delta \rho\partial_xH\rho=- \nu\Lambda^{\alpha}  \rho.
\end{align}
When $\delta=1$, the above equation becomes equation \eqref{eq:Hilbert}. For different range of $\alpha$ and $\delta$, they obtained several results about global weak solutions to equation \eqref{eq:general}. Among these results, \cite[Theorem 1.1]{bae2018global} shares some similarities with Theorem \ref{thm:weaktheorem} in this paper.
For strictly positive initial data $\rho_0\in L^1(\mathbb{R})\cap H^{1/2}(\mathbb{R})$, global weak solutions were obtained in \cite[Theorem 1.1]{bae2018global} for supercritical case $0<\alpha<1$ and $\delta\geq \frac{1}{2}.$  In comparison, we do not need the strictly positive assumption for initial data and weak solutions are obtained for all $0\leq \alpha\leq 2$. 

We will obtain spatial analytic solutions for subcritical and critical cases $1\leq \alpha\leq 2$ by different methods.
For the subcritical case $1<\alpha\leq 2$, we have the following theorem:
\begin{theorem}\label{thm:subcritical}
Let $0\leq \rho_0\in  L^{\frac{1}{\alpha-1}}(\mathbb{R})$. Then, there is a unique nonnegative solution $\rho\in C([0,\infty);L^{\frac{1}{\alpha-1}}(\mathbb{R}))\cap C^\infty((0,\infty);H^{\theta,q}(\mathbb{R}))$ for any $\theta\geq 0$ and $\frac{1}{\alpha-1}\leq q\leq \infty$. Moreover, we have
\begin{align*}
\|\rho(t)\|_{\dot{H}^{\theta,q}(\mathbb{R})}\leq Ct^{-\frac{\theta}{\alpha}-1+\frac{1}{\alpha}(1+\frac{1}{q})},\quad  \frac{1}{\alpha-1}\leq q\leq \infty,~~t>0,
\end{align*}
and 
\begin{align*}
\|\partial_x^n\rho(t)\|_{L^q}\leq K^{n}n^{n}t^{-\frac{n}{\alpha}-1+\frac{1}{\alpha}(1+\frac{1}{q})},\quad\forall n\in\mathbb{N},~~t>0,
\end{align*}
for some constant $K$ independent of $n$. Consequently, $\rho(\cdot,t)$ is spatially analytic for $t>0$
\end{theorem}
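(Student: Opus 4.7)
The plan is to combine a Duhamel/mild solution setup with two successive improvements: first a regularity bootstrap in Bessel potential spaces, and then a factorial bound on pointwise derivatives that upgrades smoothness to spatial analyticity. I would first construct a unique mild solution of
\begin{equation*}
\rho(t)=G_\alpha(t)\ast\rho_0-\int_0^t\partial_xG_\alpha(t-s)\ast\bigl(\rho(s)(H\rho(s)-\gamma x)\bigr)\di s
\end{equation*}
in $C([0,T];L^{1/(\alpha-1)})$ by a standard contraction argument in a time-weighted norm, using the smoothing estimate
\begin{equation*}
\|\partial_x^k G_\alpha(t)\ast f\|_{L^q}\leq Ct^{-k/\alpha-\frac{1}{\alpha}(1/p-1/q)}\|f\|_{L^p},\qquad 1\leq p\leq q\leq\infty,
\end{equation*}
together with the $L^p$-boundedness of the Hilbert transform for $1<p<\infty$, to handle the bilinear term $\rho H\rho$; the linear drift $-\gamma x\rho$ can be split off by incorporating the flow $\dot{x}=-\gamma x$ into a modified semigroup from the outset. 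The choice of $L^{1/(\alpha-1)}$ is dictated by the scaling of the equation at $\gamma=0$: this is the critical Lebesgue space. Nonnegativity is inherited from the viscous-splitting construction of Theorem \ref{thm:weaktheorem}, and uniqueness in the critical space identifies the mild solution with the weak one, so the global $L^1\cap H^{1/2}$ bound \eqref{eq:solutionestimate} rules out finite-time breakdown.

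\textbf{Regularity bootstrap.} Once $\rho\in L^\infty_{\rm loc}((0,\infty);L^{1/(\alpha-1)})$, I would iterate the Duhamel formula with a fractional gain: given $H^{\theta,q}$ control, apply $\Lambda^{\theta+\beta}$ for some $\beta<\alpha$ inside the time integral and use $\|\Lambda^\beta G_\alpha(s)\ast f\|_{L^q}\lesssim s^{-\beta/\alpha}\|f\|_{L^q}$, together with the Kato--Ponce inequality to control $\|\Lambda^{\theta+\beta}(\rho H\rho)\|_{L^q}$ in terms of $\|\rho\|_{H^{\theta+\beta,q}}$ and $\|\rho\|_{L^\infty}$. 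Integrability near $s=t$ forces $\beta<\alpha$, and $\lfloor\theta/\beta\rfloor+1$ iterations reach any prescribed regularity; the exact time exponent $-\theta/\alpha-1+(1+1/q)/\alpha$ is then fixed by the scaling of the equation, and the $L^q$ interpolation estimates for $q\geq 1/(\alpha-1)$ follow by combining Bessel embeddings with the smoothing inequality.

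\textbf{Analyticity via factorial derivative bounds.} Setting
\begin{equation*}
M_n(t):=t^{n/\alpha+1-(1+1/q)/\alpha}\|\partial_x^n\rho(t)\|_{L^q},
\end{equation*}
I would prove $M_n(t)\leq K^n n^n$ by induction on $n$. Differentiating the Duhamel formula and expanding via Leibniz gives
\begin{equation*}
\partial_x^n(\rho H\rho)=\sum_{k=0}^n\binom{n}{k}\partial_x^k\rho\cdot H\partial_x^{n-k}\rho,
\end{equation*}
and the smoothing estimate converts this sum into a Cauchy-type convolution on the $M_k$. Exploiting the combinatorial inequality $\binom{n}{k}k^k(n-k)^{n-k}\leq n^n$ together with a Beta-function time integral, the induction closes once $K$ is chosen large relative to the constants from the Hilbert transform, the smoothing kernel, and the drift. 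Stirling then turns $K^n n^n/n!\lesssim(Ke)^n$ into a radius of convergence $\gtrsim t^{1/\alpha}/(Ke)$ for the Taylor series of $\rho(\cdot,t)$, yielding spatial analyticity for every $t>0$.

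The main obstacle, in my view, is the combinatorial bookkeeping in the last step: the bound must scale like $n^n$ rather than $(n!)^2$ or $(2n)!$, which requires an exact matching between the time-weight exponent $n/\alpha$, the $s^{-1/\alpha}$ singularity of $\partial_x G_\alpha$, and the binomial coefficients produced by Leibniz. A secondary nuisance is the unbounded drift $-\gamma x\rho$, which does not lie in $L^q$ without moment control; decomposing $\partial_x(x\rho)=\rho+x\partial_x\rho$ and absorbing the first term as a bounded perturbation, or working directly in the moving frame generated by $\dot{x}=-\gamma x$, should handle it without affecting the factorial bound.
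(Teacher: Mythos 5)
Your architecture (mild solution by contraction in the critical space, fractional bootstrap in $H^{\theta,q}$, then factorial derivative bounds) matches the paper's, but two of your steps do not close as described.

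\textbf{The analyticity induction.} Your scheme puts all $n$ derivatives on the nonlinearity via Leibniz and then integrates $\int_0^t(t-s)^{-a}\,\|\partial_x^n(\rho H\rho)(s)\|\,\di s$. By the induction hypothesis the integrand behaves like $s^{-n/\alpha-b}$, which is not integrable at $s=0$ once $n\geq 1$; conversely, putting all $n$ derivatives on the kernel produces $(t-s)^{-(n+1)/\alpha}$, not integrable at $s=t$. So the ``Beta-function time integral'' you invoke diverges, and this is exactly the obstruction that makes the naive Giga--Sawada argument yield only $n^{2n/\alpha}$ (useless for $\alpha<2$), as the paper notes in the introduction. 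The paper's fix is to split the time integral into the pieces $[\tfrac{m-1}{n}t,\tfrac{m}{n}t]$ and pair the $m$-th piece with the $m$-th order Leibniz terms, so that $n-m$ derivatives sit on the kernel and $m$ on the nonlinearity precisely where both singularities are mild (see the decomposition into $A_1,\dots,A_5$ in the proof of Theorem \ref{thm:spatialanaly}, with the top-order terms $A_5$ absorbed into $B_n(t)$). Your proposal contains no such localization. A second, independent problem: your combinatorial inequality $\binom{n}{k}k^k(n-k)^{n-k}\leq n^n$ holds term by term, but summing over the $n+1$ values of $k$ costs a factor of $n$ at every induction step, which destroys the $K^nn^n$ ansatz. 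The paper avoids this by inducting on the weaker quantity $K^{m-\delta}m^{m-\delta}$ with $\delta>\tfrac12$ and invoking Kahane's lemma $\sum_j\binom{k}{j}j^{j-\delta}(k-j)^{k-j-\delta}\leq\lambda k^{k-\delta}$ (Lemma \ref{lmm:sequesmulti}), for which the sum itself is controlled.

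\textbf{Nonnegativity and global extension.} You propose to inherit nonnegativity from the viscous-splitting weak solution of Theorem \ref{thm:weaktheorem} and to use the bound \eqref{eq:solutionestimate} to rule out breakdown. This fails on the hypotheses: Theorem \ref{thm:subcritical} assumes only $\rho_0\in L^{\frac{1}{\alpha-1}}(\mathbb{R})$, which does not give $\rho_0\in L^1\cap H^{1/2}$, so the weak solution is not even available; moreover the paper explicitly does not have uniqueness of weak solutions, so there is no weak--strong identification to transfer positivity or a priori bounds to the mild solution. The paper instead proves nonnegativity by approximating $\rho_0$ with strictly positive smooth data, using the stability of the contraction in $X_T$ (Step 6 of Theorem \ref{thm:regularity}), and then extends globally via the $L^p$ maximum principle for nonnegative strong solutions (Lemma \ref{lmm:MaxP}) applied with $p=\tfrac{2}{\alpha-1}$, which gives a uniform lower bound on the local existence time from any later initial instant. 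You need some replacement for that monotone $L^{\frac{2}{\alpha-1}}$ bound; the $H^{1/2}$ estimate from the weak theory is not it. (The remaining issues --- the per-step gain in the bootstrap being $\ell<\alpha-1$ rather than $\beta<\alpha$ because of the extra $\partial_x$ on the kernel, and the drift term, which the paper simply sets to $\gamma=0$ in this section --- are minor.)
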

Here $H^{\theta,q}$ denotes Bessel potential space (or fractional Sobolev spaces, see Section \ref{sec:bessel}). Our strategies to prove Theorem \ref{thm:subcritical} is as follows. We consider the mild solutions to \eqref{eq:Hilbert} ($1<\alpha\leq 2$) of the form:
\begin{align}\label{eq:mildsolution}
\rho(x,t)=G_\alpha(\cdot,t)\ast\rho_0-\int_0^t \partial_xG_\alpha(\cdot,t-s)\ast(\rho(s) H\rho(s)) \di s.
\end{align}
Notice that if $\rho(x,t)$ is a solution to \eqref{eq:Hilbert}  with initial data $\rho_0$, then $\rho_\lambda(x,t)=\lambda^{\alpha-1}\rho(\lambda x,\lambda^\alpha t)$ is also a solution with initial data $\rho_{\lambda,0}(x)=\lambda^{\alpha-1}\rho_0(\lambda x)$. This scaling preserves the $L^{\frac{1}{\alpha-1}}(\mathbb{R})$ norm. It is nature to study mild solutions to \eqref{eq:Hilbert} with initial data $\rho_0\in L^{\frac{1}{\alpha-1}}(\mathbb{R})$. Next, we describe the results for the subcritical case $1<\alpha\leq 2$ in several steps.

\emph{Local existence and uniqueness:} When $\rho_0\in L^{\frac{1}{\alpha-1}}(\mathbb{R})$, we use Banach fixed point theorem to prove local existence and uniqueness of mild solutions in the following Banach space (see Theorem \ref{thm:contraction}):
\begin{align}\label{eq:space1}
X_T:=\left\{f\in C_b((0,T];L^{\frac{1}{\alpha-1}}(\mathbb{R})),\quad \sup_{0<t\leq T}t^{\frac{\alpha-1}{2\alpha}}\|f(t)\|_{L^{\frac{2}{\alpha-1}}}<\infty  \right\},
\end{align}
with norm
\[
\|f\|_{X_T}:=\max\left\{\sup_{0<t\leq T}\|f(t)\|_{L^{\frac{1}{\alpha-1}}},\quad \sup_{0<t\leq T}t^{\frac{\alpha-1}{2\alpha}}\|f(t)\|_{L^{\frac{2}{\alpha-1}} }\right \}.
\]
The idea to choose the above space for contraction argument is well-known. One can refer to \cite{weissler1980local,kato1984strongl,biler1995cauchy} for some variations of this method for local existence of solutions to different equations. 

\emph{Fractional bootstrapping for regularity in Bessel potential spaces:} We improve the spatial regularity of solution $\rho$ by  a fractional bootstrapping and obtain time decay estimate in Bessel potential spaces (see Theorem \ref{thm:regularity}). Here, we adopt the name ``fractional bootstrapping" used in \cite{dong2009optimal} for fractional Navier Stokes equations, although the proof of high order regularity and spacial analyticity are different. We first show the hyper-contractivity and prove that $\rho(t)\in L^q(\mathbb{R})$ for any $\frac{1}{\alpha-1}\leq q\leq\infty$ and $t>0.$ In comparison with the usual method for hyper-contractivity (see, e.g., \cite{bedrossian2014existence,liu2016refined,dong2008spatial,li2011one}), the proof is more direct in the sense that we do not need any kind of a priori estimates or contraction argument. Then, we improve the spatial regularity of the mild solutions step by step. From the proof of Theorem \ref{thm:regularity}, we see that in each step, the time integral in the nonlinear term of mild solutions only allows us to increase spatial regularity by  some decimal order $0<\ell<\alpha-1\leq 1$. Hence, Bessel potential spaces are nature choices for this method and  this is also the reason for the name fractional bootstrapping. 

\emph{Nonnegativity:} For the nonnegativity of mild solutions, we follow the method in \cite[Lemma 2.7]{li2011one}, but without the condition $0\leq \rho_0\in L^{\frac{1}{\alpha-1}}(\mathbb{R})\cap L^p(\mathbb{R})$ for some $\frac{1}{\alpha-1}<p<\infty$. In other words, we only need $0\leq \rho_0\in L^{\frac{1}{\alpha-1}}(\mathbb{R}),$ which is more compatible with the results for existence and regularity.

\emph{Spatial analyticity and global extension:} To prove the spatial analyticity of mild solutions, we are going to give a simple generalization of the method in \cite{giga2002regularizing,sawada2005analyticity} for Navier-Stokes equation. As noticed in \cite[Remark 7]{dong2008spatial} (or \cite[Remark 2.4]{dong2009optimal}), if we directly use the method in \cite{giga2002regularizing,sawada2005analyticity}, we will only obtain
\[
|\partial_x^n\rho(x,t)|\leq K^{n+1}n^{2n/\alpha}t^{-\frac{n}{\alpha}-1+\frac{1}{\alpha}}
\]
for some constant $K$ independent of $n\in\mathbb{N}$. This does not imply the spatial analyticity of $\rho$ if $\alpha<2$. To overcome this difficulty, we  are going to improve the method in \cite{giga2002regularizing,sawada2005analyticity} and use it for fractional diffusion with $1<\alpha\leq 2$ (see Theorem \ref{thm:spatialanaly}). Then, by the $L^{p}(\mathbb{R})$ maximum principle for the nonnegativity solutions, we extend the solutions globally (see Lemma \ref{lmm:MaxP} and Theorem \ref{thm:global}). 

Notice that there is another smart way for  spatial analyticity given by Dong and Li \cite{dong2008spatial}, where some spaces involving the information of high order derivatives were introduced for contraction argument to obtain spacial analytic solutions to the subcritical dissipative quasi-geostrophic equations. By the same method, Li and Rodrigo \cite{li2011one} studied local existence and finite time blow-up behavior of solutions for the following equation with $1<\alpha\leq 2$: 
\begin{equation}\label{eq:Hilbert2}
\left\{\begin{aligned}
&\partial_t\rho-\partial_x(\rho H\rho)=-\nu \Lambda^\alpha \rho,~~t>0,~~x\in\mathbb{R},\\
&\rho(x,0)=\rho_0(x),~~x\in\mathbb{R},
\end{aligned}
\right.
\end{equation}
Comparing with equation \eqref{eq:Hilbert}, the sign of the nonlinear term is different. To study the nonnegative solutions of \eqref{eq:Hilbert} is equivalent to study the non-positive solutions of \eqref{eq:Hilbert2}. For nonnegative solutions of \eqref{eq:Hilbert}, we have the $L^p(\mathbb{R})$ maximum principle  to extend the mild solutions globally, which is false for nonnegative solutions of \eqref{eq:Hilbert2}. And finite time blow-up behavior of solutions to \eqref{eq:Hilbert2} with some special initial data was proved by \cite[Theorem 3.1]{li2011one}. The reason for this difference can be easily observed from the particle systems for these two equations. Formally, equation \eqref{eq:Hilbert2}  with $\nu=0$ corresponds to the mean field equation for the following particle system:
\begin{align}\label{eq:DysonBrownian1}
\di \lambda_j(t)=\frac{1}{\sqrt{N}}\di B_j(t)-\frac{1}{\pi N}\sum_{k\neq j}\frac{\di t}{\lambda_j(t)-\lambda_k(t)},\quad 1\leq j\leq N.
\end{align}
The force between particles is attractive force. Hence, they try to aggregate together to form singularities. The force between particles in \eqref{eq:DysonBrownian} is repulsive force, and global well-posedness can be obtained; see \cite{rogers1993interacting} for global well posedness of system \eqref{eq:DysonBrownian}. 

For the critical case $\alpha=1$ of \eqref{eq:Hilbert}, we will also prove global existence and uniqueness of spatial analytic solutions. Due to the following relation
\[
\Lambda\rho = \partial_xH\rho = H\partial_x\rho = \frac{1}{\pi}\p\int_{\mathbb{R}}\frac{\rho(x,t)-\rho(y,t)}{|x-y|^2}\di y = \partial_xu,
\]
equation \eqref{eq:Hilbert} is rewritten as
\begin{equation}\label{eq:Hilbert1}
\partial_t\rho+\partial_x\left[\rho (u-\gamma x)+\nu u\right]=0.
\end{equation}
When $\gamma=0$, global spatial analytic solutions to \eqref{eq:Hilbert1} were given by \cite[Theorem 4.1]{castro2008global} for initial data $\rho_0>-\nu$ using the same method as the case for $\nu=0$ (described before). If $\rho_0\geq-\nu$ and there exists $x_0\in\mathbb{R}$ such that $\rho_0(x_0)=\nu$, then $\partial_xH\rho$ will blow up in finite time \cite[Thoerem 4.8]{castro2008global}. Comparing with the cases $\nu=0$ or $\alpha=2$ of \eqref{eq:Hilbert}, the transformation \eqref{eq:transform} does not work for equation \eqref{eq:Hilbert1} with $\nu>0$. Hence, we can not derive the spatial analytic solutions to \eqref{eq:Hilbert1} with $\gamma>0$  directly from the results \cite[Theorem 4.1]{castro2008global}  by transformation \eqref{eq:transform}. Moreover, equation \eqref{eq:Hilbert1} also does not have gradient flow structure as \eqref{eq:gradientflow}. In this paper, we will use a similar idea as \cite[Theorem 4.1]{castro2008global} to obtain spatial analytic solutions to \eqref{eq:Hilbert1}  with $\gamma>0,$ and the solutions show some different and interesting properties in comparison with the case $\gamma=0$. We have the following theorem:
\begin{theorem}\label{thm:critical}
Let $0\leq \mu<\nu$ and $-\mu\leq \rho_0\in  L^1(\mathbb{R})\cap H^s(\mathbb{R})$ with $s>1/2$. Denote $T=\frac{1}{\gamma}\ln(\frac{2\nu}{\mu}-1)$. Then, there exists a unique spatial analytic solution $\rho(x,t)$ to \eqref{eq:Hilbert} with $\alpha=1$ in $(0,T)$.

When $\mu=0$, the solution $\rho(x,t)$ exists globally and converges to the steady state given by semicircle law:
\begin{align}\label{eq:steadypho}
\lim_{t\to\infty}\rho(x,t)=\rho_\infty(x):=\frac{\sqrt{\sqrt{[\gamma^2x^2-\nu^2-2\gamma]^2+4\gamma^2x^2\nu^2}-[\gamma^2x^2-\nu^2-2\gamma]}-\sqrt{2}\nu}{\sqrt{2}\pi}.
\end{align}

\end{theorem}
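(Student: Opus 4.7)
The plan is to extend the complex-characteristics technique of Castro--C\'ordoba from the case $\gamma=0$ to the forced equation \eqref{eq:Hilbert1}. Since $\rho+iH\rho$ extends analytically to $\mathbb{C}_+$ via the Cauchy transform, the function
\[
h(z,t) := H\rho(z,t) - i\bigl(\rho(z,t)+\nu\bigr) - \gamma z
\]
is analytic on $\mathbb{C}_+$, and a direct calculation based on Tricomi's identity $H(\rho H\rho)=\tfrac{1}{2}((H\rho)^2-\rho^2)$ (applied to \eqref{eq:Hilbert1} and to the equation obtained by taking $H$ of it) yields the forced complex Burgers equation
\[
\partial_t h + h\,\partial_z h = \gamma^2 z + i\gamma\nu \qquad\text{on }\mathbb{C}_+.
\]
Because $\rho_0+\mu\ge 0$, Poisson's formula gives the key one-sided bound $\operatorname{Im} h_0(z)\le -(\nu-\mu)<0$ throughout $\overline{\mathbb{C}_+}$, the analogue of Castro--C\'ordoba's hypothesis $\rho_0>-\nu$.

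The characteristics $\dot z = h(z(t),t)$ satisfy the linear ODE $\ddot z=\gamma^2 z+i\gamma\nu$, with explicit solution
\[
\Phi_t(z_0) := z(t;z_0) = -\tfrac{i\nu}{\gamma} + \bigl(z_0+\tfrac{i\nu}{\gamma}\bigr)\cosh(\gamma t) + \tfrac{h_0(z_0)}{\gamma}\sinh(\gamma t),\quad h(\Phi_t(z_0),t)=\dot z(t;z_0).
\]
Once $\Phi_t$ is shown to be a bijection of $\mathbb{C}_+$ onto a domain containing $\mathbb{C}_+$, the definition $h(z,t):=\dot z(t;\Phi_t^{-1}(z))$ gives an analytic solution of the PDE on $\mathbb{C}_+$, from which the desired spatial analytic density is recovered via $\rho(x,t):=-\operatorname{Im} h(x+i0,t)-\nu$. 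Uniqueness then follows by tracing any two analytic solutions back along characteristics.

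The hard part will be establishing invertibility of $\Phi_t$ up to the explicit time $T=\tfrac{1}{\gamma}\ln(\tfrac{2\nu}{\mu}-1)$. A coincidence $\Phi_t(z_0)=\Phi_t(w_0)$ with $z_0\neq w_0$ would force the complex slope $(h_0(z_0)-h_0(w_0))/(z_0-w_0)$ to equal the \emph{real} number $-\gamma\coth(\gamma t)$; this is ruled out by the Nevanlinna-type bound on $h_0$ through a standard divided-difference argument. The threshold $T$ itself emerges from the boundary identity
\[
\operatorname{Im}\Phi_t(x) = -\frac{\rho_0(x)\,\sinh(\gamma t)}{\gamma} - \frac{\nu(1-e^{-\gamma t})}{\gamma},\qquad x\in\mathbb{R},
\]
which stays $\le 0$ precisely when $\mu\sinh(\gamma t)\le \nu(1-e^{-\gamma t})$, that is, $t\le T$; combined with the far-field asymptotic $\Phi_t(z_0)\sim z_0 e^{-\gamma t}$ and the argument principle this delivers $\mathbb{C}_+\subset\Phi_t(\mathbb{C}_+)$ on $(0,T)$. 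Initial data with only $H^s$ regularity ($s>1/2$) is accommodated by mollifying $\rho_0$ to analytic $\rho_0^{\varepsilon}$, running the construction to obtain $\rho^{\varepsilon}$ on $[0,T)$, and passing to the limit using the $L^1\cap H^{1/2}$ bounds from Theorem \ref{thm:weaktheorem} and uniqueness of the limit.

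When $\mu=0$ one has $T=+\infty$, so the solution is global. To identify the $t\to\infty$ limit I fix $z\in\mathbb{C}_+$ and observe that the preimage $z_0=\Phi_t^{-1}(z)$ must grow like $Ce^{\gamma t}$ for $\Phi_t(z_0)$ to stay bounded; plugging in the far-field behavior $u_0(z_0)-i\rho_0(z_0)\sim M/(\pi z_0)$ (with $M=\int_{\mathbb R}\rho_0\,dx$) reduces the fixed-point relation to the quadratic $C^2-(z+i\nu/\gamma)C+M/(2\gamma\pi)=0$. Selecting the branch with $|C|\to\infty$ and substituting back into $\dot z(t;z_0)$ yields
\[
h_\infty(z) = -\gamma\sqrt{(z+i\nu/\gamma)^2 - 2M/(\gamma\pi)},
\]
whose imaginary boundary value on $\mathbb{R}$, via $\rho_\infty(x)=-\operatorname{Im} h_\infty(x+i0)-\nu$, produces the explicit semicircle-type profile \eqref{eq:steadypho}; an independent cross-check comes from the stationary identity $H\rho_\infty=\gamma x\rho_\infty/(\rho_\infty+\nu)$ obtained by setting $\partial_t\rho_\infty=0$ in \eqref{eq:Hilbert1}.
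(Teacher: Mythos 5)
Your overall route coincides with the paper's (Section 4): pass to the forced complex Burgers equation on $\mathbb{C}_+$, integrate the characteristic ODE explicitly, invert the flow map, recover $\rho$ as the boundary trace, and identify the long-time limit from the quadratic fixed-point relation for $e^{-\gamma t}\Phi_t^{-1}(z)$. The derivation of the equation, the formula for $\Phi_t$, the identification of $T$ from $\mu(e^{\gamma t}+1)\le 2\nu$, and the shape of the convergence argument all match.

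The gap is in the invertibility step. The map $\Phi_t$ is in general \emph{not} injective on all of $\mathbb{C}_+$, so it cannot be ``a bijection of $\mathbb{C}_+$ onto a domain containing $\mathbb{C}_+$'', and no divided-difference argument based only on the bound $\mathrm{Im}\, h_0\le-(\nu-\mu)$ can rule out the coincidence: a holomorphic map of $\mathbb{C}_+$ into a half-plane can perfectly well have derivative equal to a prescribed real number in $(-\infty,-\gamma)$. Concretely, $\Phi_t'(z_0)=\cosh(\gamma t)+\gamma^{-1}h_0'(z_0)\sinh(\gamma t)$ with $h_0'=f_0'-\gamma$ and $f_0$ the Cauchy transform of $\rho_0$. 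Take $\rho_0\ge0$ smooth, even, supported in $\{1\le|s|\le2\}$ (so $\mu=0$, $T=\infty$). For $0<y<1$ one computes
\begin{equation*}
f_0'(iy)=-\frac1\pi\int_{\mathbb{R}}\frac{(s^2-y^2)\rho_0(s)}{(s^2+y^2)^2}\,\di s<0,
\end{equation*}
a genuine negative real, so $h_0'(iy)<-\gamma$; since $-\gamma\coth(\gamma t)$ sweeps $(-\infty,-\gamma)$, there exist $y_0$ and $t^*<T$ with $\Phi_{t^*}'(iy_0)=0$, and $\Phi_{t^*}$ fails to be even locally injective there. What is true --- and what Lemma \ref{lmm:bijection} proves --- is only that every point of $\occ$ has exactly one preimage; the uniqueness proof must use the constraint that the \emph{image} lies in $\occ$ (in the paper, the inequality $a(y+\nu/\gamma)-bP\rho_0(x,y)-\nu/\gamma\ge0$ combined with \eqref{eq:mu}, which is precisely where $t<T$ enters), and indeed the critical point $iy_0$ above is sent strictly below the real axis. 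Your own argument-principle observation is the natural repair: since $\Phi_t(\mathbb{R})$ lies strictly in the open lower half-plane for $t<T$ and $\Phi_t(z_0)\sim e^{-\gamma t}z_0$ at infinity, the winding number of the image of a large semicircular contour about any $Z\in\occ$ equals one, which gives existence \emph{and} uniqueness of the preimage in one stroke; use that in place of the divided-difference claim. Two smaller points: mollifying $\rho_0$ is unnecessary, since the construction only needs $\rho_0$ bounded and continuous, which $L^1(\mathbb{R})\cap H^s(\mathbb{R})$, $s>1/2$, already provides; and in the long-time limit you should first establish compactness of $e^{-\gamma t}\Phi_t^{-1}(z)$ before extracting the limit $C$, as is done in Appendix \ref{apd:convergence}.
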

As shown in the above theorem, we obtain global spatial analytic solutions to \eqref{eq:Hilbert1} when initial data $\rho_0\geq0$, and  the solutions converges to steady state pointwisely. However, if $\rho_0\geq-\mu$ for some $0\leq \mu<\nu$, we can only obtain spatial analytic solutions in time interval $(0,T)$ with $T=\frac{1}{\gamma}\ln(\frac{2\nu}{\mu}-1)$, which is different with the case $\gamma=0$ given by \cite[Theorem 4.1]{castro2008global}.

The rest of this paper is organized as follows. We are going to use a viscous-splitting algorithm to prove Theorem \ref{thm:weaktheorem} in the next section.
  Local existence and uniqueness of mild solutions to \eqref{eq:Hilbert} with $1<\alpha\leq 2$ are obtained in Section \ref{sec:local}. Then, we improve the regularity and show the spatial analyticity of solutions in Section \ref{sec:regularity}. In Section \ref{sec:global} we extend the local solution globally by the $L^p$ maximum principle for nonnegative solutions. For the critical case $\alpha=1$, we first obtain global $\occ$-holomorphic solutions to the corresponding Complex Burgers equation (see \eqref{eq:complexBurgers}) in Section \ref{sec:analytic}. Then, we use these $\occ$-holomorphic solutions  to recover the solutions to \eqref{eq:Hilbert} with $\alpha=1$, and derive the pointwise convergence to the steady state when $\rho_0\geq0$.

\section{Global nonnegative weak solutions for  $0\leq\alpha\leq 2$}\label{sec:weak}
In this section, we are going to use a viscous-splitting algorithm to obtain global weak solutions in $L^1(\mathbb{R})\cap \dot{H}^{1/2}(\mathbb{R})$ to equation \eqref{eq:Hilbert} with $0\leq\alpha\leq 2$. Here, we only consider the case $\gamma=0$. Note that we have interpolation inequality
\[
\|\rho\|_{L^2}\leq 3\|\rho\|_{L^1}^{1/2}\|\rho\|^{1/2}_{\dot{H}^{1/2}}.
\]
Hence $\rho \in L^1(\mathbb{R})\cap \dot{H}^{1/2}(\mathbb{R})$ is equivalent to $\rho \in L^1(\mathbb{R})\cap H^{1/2}(\mathbb{R})$. Let us define the weak solutions:
\begin{definition}\label{def:weak1}
For $T>0$ and $0\leq \rho_0\in L^1(\mathbb{R})\cap H^{1/2}(\mathbb{R})$, a nonnegative function $\rho\in L^\infty(0,T;L^1(\mathbb{R})\cap H^{1/2}(\mathbb{R}))\cap W^{1,\infty}(0,T;H^{-m}(\mathbb{R}))$ for some $m>0$  is said to be a weak solution of  equation \eqref{eq:Hilbert} if
\begin{multline}\label{eq:defweak1}
\int_0^T\int_{\mathbb{R}}\partial_t\phi(x,t)\rho( x,t) \,\di x\di t+\int_{\mathbb{R}}\phi(x,0)\rho_0(x)\di x\\
=-\frac{1}{2}\int_0^T\int_{\mathbb{R}}\int_{\mathbb{R}}\frac{\partial_x\phi(x,t)-\partial_x\phi(y,t)}{x-y}\rho(x,t)\rho(y,t)\,\di x\di y\di t\\
+\nu\int_0^T\int_{\mathbb{R}}\rho(x,t)\Lambda^\alpha\phi(x,t)\di x\di t,
\end{multline}
holds for any test function $\phi\in C_c^\infty(\mathbb{R}\times[0,T))$.
\end{definition}

Next, we describe the viscous-splitting algorithm by means of a Trotter formula.
Denote the solution operator to Dyson equation by $D(t)$, such that $\omega(x,t)=D(t)\rho_0(x)$ solves 
\begin{equation}\label{eq:Dyson}
\left\{\begin{aligned}
&\partial_t\omega+\partial_x(\omega H\omega)=0,~~x\in\mathbb{R},~~t>0,\\
&\omega(x,0)=\rho_0(x).
\end{aligned}\right.
\end{equation}
Also denote $G_\alpha(t)\omega_0(x):=G_\alpha(\cdot,t)\ast \omega_0$, so that $v(x,t)=G_\alpha(t)\omega_0(x)$ solves the fractional heat equation
\begin{equation}\label{eq:fracHeat}
\left\{\begin{aligned}
&\partial_tv=-\nu\Lambda^\alpha v,~~x\in\mathbb{R},~~t>0,\\
&v(x,0)=\omega_0(x).
\end{aligned}\right.
\end{equation}
Let $\varphi_h>0$ ($h>0$) be the standard Friedrichs mollifier. Set
\begin{align}\label{eq:initialmollifier}
\rho_{0,h}=\rho_0\ast\varphi_h.
\end{align}
Then, for nontrival initial datum $0\leq \rho_0\in L^1(\mathbb{R})\cap H^{1/2}(\mathbb{R})$, we have $\rho_{0,h}(x)>0$ for $x\in\mathbb{R}$ and $\rho_{0,h}\in L^1(\mathbb{R})\cap H^s(\mathbb{R})$ ($s>1/2$).
Then, the viscous-splitting algorithm by means of a Trotter formula is given by
\begin{equation}\label{eq:trotter}
\rho_{n,h}(x)=[G_\alpha(h)D(h)]^n\rho_{0,h}(x),\quad x\in\mathbb{R},
\end{equation}
where $\rho_n$ is the approximate value of the solution at time $t_n:=nh$ and $h$ is the length of time step. One can also use the Strang's method; see \cite[Chapter 3]{majda2002vorticity}. Define
\begin{equation}\label{eq:tilderho}
\tilde{\rho}_h(x,t)=D(s)\rho_{n,h}(x),\quad\rho_h(x,t)=G_\alpha(s)D(s)\rho_{n,h}(x)=G_\alpha(s)\tilde{\rho}_h(x,t)
\end{equation}
for $t=s+t_n,~~0\leq s\leq h,~~n\in\mathbb{N}.$
For $t\in(t_n,t_{n+1})$, we have
\begin{equation}\label{eq:rhoh}
\partial_t \rho_h =-\nu \Lambda^\alpha \rho_{h}-G_\alpha(s)\partial_x(\tilde{\rho}_{h}H\tilde{\rho}_{h}).
\end{equation}
Hence, for $\phi\in C_c^\infty(\mathbb{R}\times[0,T))$, we have
\begin{multline}\label{eq:tn}
\int_{t_n}^{t_{n+1}}\int_{\mathbb{R}} \rho_h\partial_t\phi\di x\di t-\int_{\mathbb{R}} \rho_h\phi\di x\Big|_{t_n}^{t_{n+1}}\\
=-\frac{1}{2}\int_{t_n}^{t_{n+1}}\int_{\mathbb{R}}\int_{\mathbb{R}}\frac{\partial_x[G_\alpha(s)\phi(t)](x)-\partial_x[G_\alpha(s)\phi(t)](y)}{x-y}\tilde{\rho}_{h}(x,t)\tilde{\rho}_{h}(y,t)\,\di x\di y\di t\\
+\nu\int_{t_n}^{t_{n+1}}\int_{\mathbb{R}}\rho_{h}\Lambda^\alpha\phi \di x\di t.
\end{multline}
Assume $T\in (t_{N_h-1},t_{N_h})$ for some positive integer $N_h$. Sum \eqref{eq:tn} for $n=1,\cdots,N_h$ together and we obtain
\begin{multline}\label{eq:approximateweak}
\int_{0}^{T}\int_{\mathbb{R}} \rho_h\partial_t\phi\di x\di t+\int_{\mathbb{R}} \rho_{0,h}\phi(x,0)\di x\\
=-\frac{1}{2} \int_{0}^{T}\int_{\mathbb{R}}\int_{\mathbb{R}}\frac{\partial_x[G_\alpha(s_t)\phi(t)](x)-\partial_x[G_\alpha(s_t)\phi(t)](y)}{x-y}\tilde{\rho}_{h}(x,t)\tilde{\rho}_{h}(y,t)\,\di x\di y\di t\\
+\nu\int_{0}^{T}\int_{\mathbb{R}}\rho_{h}\Lambda^\alpha\phi \di x\di t,
\end{multline}
where $s_t=t-t_n$ for $n$ satisfying $t\in(t_n,t_{n+1})$. Hence, $s_t\to0$ as $h\to0$.

Next, we show some compactness results for $\{\tilde{\rho}_h\}_{h>0}$ and $\{\rho_h\}_{h>0}$. We have:
\begin{lemma}\label{lmm:uniformestimates}
Assume $0\leq \rho_0\in L^1(\mathbb{R})\cap H^{1/2}(\mathbb{R})$. Let $\rho_{0,h}$ be defined by \eqref{eq:initialmollifier} for $h>0$. Then,  we have
\begin{align}\label{eq:estiamte1}
\|\rho_{n,h}\|_{L^1}\equiv \|\rho_0\|_{L^1},\quad \|\rho_{n,h}\|_{H^{1/2}}\leq  \|\rho_{0}\|_{H^{1/2}},\quad\forall n\in\mathbb{N}, ~~h>0,
\end{align}
where $\rho_{n,h}$ is given by \eqref{eq:trotter}. Moreover, we have
\begin{align}\label{eq:uniform1}
\|\tilde{\rho}_{h}(t)\|_{L^1}\equiv \|\rho_0\|_{L^1},\quad \|\tilde{\rho}_{h}(t)\|_{H^{1/2}}\leq \|\rho_{0}\|_{H^{1/2}},
\end{align}
\begin{align}\label{eq:uniform2}
\|\rho_{h}(t)\|_{L^1}\equiv \|\rho_0\|_{L^1},\quad \|\rho_{h}(t)\|_{H^{1/2}}\leq \|\rho_{0}\|_{H^{1/2}},
\end{align}
and
\begin{align}\label{eq:uniform3}
\|\partial_t\tilde{\rho}_{h}(t)\|_{H^{-3}}\leq C,\quad \|\partial_t \rho_{h}(t)\|_{H^{-3}}\leq C
\end{align}
for any $t\geq0$, $\tilde{\rho}_h$, $\rho_h$ given by \eqref{eq:tilderho} and some constant $C$ independent of $h$.
\end{lemma}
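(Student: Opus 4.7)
Proof proposal. I would establish three elementary properties for each of the two building blocks $G_\alpha(h)$ and $D(h)$ of the Trotter composition: (i) nonnegativity preservation, (ii) $L^1$ mass conservation on the nonnegative cone, and (iii) nonincrease of the $H^{1/2}(\mathbb R)$ norm. Granted these, \eqref{eq:estiamte1} follows by induction on $n$ starting from the mollified datum $\rho_{0,h}$, and \eqref{eq:uniform1}--\eqref{eq:uniform2} follow by applying one more partial step ($D(s)$, resp.\ $G_\alpha(s)D(s)$, with $0\le s\le h$) to $\rho_{n,h}$.

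For the fractional heat semigroup the verification is immediate: the kernel $G_\alpha(\cdot,t)$ is nonnegative with $\int_{\mathbb R} G_\alpha(x,t)\,\di x=1$, so Young's inequality gives $L^1$ conservation on nonnegative data, while the Fourier multiplier $e^{-\nu t|\xi|^\alpha}\le 1$ is contractive on $L^2$, $\dot H^{1/2}$, $H^{-3}$ and every intermediate space by Plancherel. For the Dyson operator $D(h)$ applied to the smooth strictly positive Schwartz-class datum $\rho_{0,h}$, the global analytic theory of \cite{castro2008global,gao2020large} supplies a unique positive classical solution; mass conservation is direct from the divergence form $\partial_t\omega+\partial_x(\omega H\omega)=0$, and the $H^{1/2}$ propagation follows from the energy identity obtained by testing \eqref{eq:Dyson} against $\Lambda\omega$, combined with $\Lambda=H\partial_x$ and the Tricomi identity $2H(\omega H\omega)=(H\omega)^2-\omega^2$, which on the nonnegative cone exposes a dissipation term of favorable sign. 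Since $\varphi_h\ge 0$ and $\int\varphi_h=1$, the mollified datum satisfies $\|\rho_{0,h}\|_{L^1}=\|\rho_0\|_{L^1}$ and $\|\rho_{0,h}\|_{H^{1/2}}\le\|\rho_0\|_{H^{1/2}}$ (the latter from $|\hat\varphi_h(\xi)|\le 1$), so the induction closes uniformly in $n$ and $h$.

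For \eqref{eq:uniform3} I would exploit the low regularity of $H^{-3}$ via the one-dimensional embedding $L^1(\mathbb R)\hookrightarrow H^{-1}(\mathbb R)$, which follows from $\|f\|_{H^{-1}}^2=\int |\hat f(\xi)|^2/(1+\xi^2)\,\di\xi\le C\|f\|_{L^1}^2$. Combined with $\|\tilde\rho_h H\tilde\rho_h\|_{L^1}\le\|\tilde\rho_h\|_{L^2}^2$ and the interpolation $\|\tilde\rho_h\|_{L^2}^2\le 9\|\tilde\rho_h\|_{L^1}\|\tilde\rho_h\|_{\dot H^{1/2}}$ recalled before Definition \ref{def:weak1}, this yields
\[
\|\partial_x(\tilde\rho_h H\tilde\rho_h)\|_{H^{-3}}\le \|\tilde\rho_h H\tilde\rho_h\|_{H^{-2}}\le C\|\tilde\rho_h\|_{L^2}^2\le C(\|\rho_0\|_{L^1},\|\rho_0\|_{H^{1/2}}),
\]
uniformly in $h$. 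Since $\Lambda^\alpha$ maps $H^{1/2}$ into $H^{-3}$ continuously (as $-3+\alpha\le -1<1/2$) and $G_\alpha(s)$ is a contraction on $H^{-3}$, the evolution equations $\partial_t\tilde\rho_h=-\partial_x(\tilde\rho_h H\tilde\rho_h)$ and \eqref{eq:rhoh} immediately give \eqref{eq:uniform3}. The delicate part I expect to spend most effort on is the $H^{1/2}$ monotonicity under $D(h)$: extracting a manifestly nonpositive right-hand side from the energy identity requires both the pointwise nonnegativity of $\omega$ and enough smoothness to justify the Tricomi symmetrization rigorously, which is precisely the reason the initial datum must be mollified before the splitting scheme is launched.
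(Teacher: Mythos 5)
Your proposal follows essentially the same route as the paper: both reduce the lemma to nonnegativity, $L^1$-conservation and $H^{1/2}$-nonincrease for each of the two splitting operators (invoking the global theory of the Dyson equation \eqref{eq:Dyson} from \cite{castro2008global,gao2020large} for $D(h)$ and elementary kernel/Fourier facts for $G_\alpha(h)$), and then induct on $n$ starting from the mollified datum. The only noticeable difference is in \eqref{eq:uniform3}, where the paper pairs $\partial_t\tilde{\rho}_h$ with a test function and symmetrizes the singular integral to obtain a bound by $C\|\rho_0\|_{L^1}^2\|\partial_{xx}\phi\|_{L^\infty}$, whereas you estimate $\|\partial_x(\tilde{\rho}_h H\tilde{\rho}_h)\|_{H^{-3}}$ through the embedding $L^1(\mathbb{R})\hookrightarrow H^{-1}(\mathbb{R})$ and the interpolated $L^2$ bound; both arguments are correct and yield a constant independent of $h$.
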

\begin{proof}
Because $0<\rho_{0,h}\in L^1(\mathbb{R})\cap H^s(\mathbb{R})$ $(s>1/2)$, we have a unique global nonnegative smooth solution to \eqref{eq:Dyson}; see \cite[Theorem 2.1]{gao2020large}. Moreover, from the proof of \cite[Theorem 2.2]{gao2020large}, we have
\[
\|D(h)\rho_{0,h}\|_{L^1}\equiv \|\rho_0\|_{L^1},\quad \|D(h)\rho_{0,h}\|_{H^{1/2}}\leq \|\rho_{0}\|_{H^{1/2}}.
\]
Since the solution to fractional heat equation \eqref{eq:fracHeat} is also nonnegative for nonnegative initial data, and it also conserves norms of $L^1(\mathbb{R})$ and $H^{1/2}(\mathbb{R})$,  inequalities in \eqref{eq:estiamte1} hold for $n=1$. By the definition of $\rho_{n,h}$, we know that  \eqref{eq:estiamte1} holds for any $n\in\mathbb{N}$. By the same reason, we also have \eqref{eq:uniform1} and \eqref{eq:uniform2}.

Next, we sketch the proof of \eqref{eq:uniform3}. Since $\tilde{\rho}_h(x,t)$ satisfies \eqref{eq:Dyson} with initial data $\rho_{n,h}(x)$ for $t\in (t_n,t_{n+1})$, the following estimate holds for any $\phi\in C_c^\infty(\mathbb{R})$:
\begin{align*}
\int_{\mathbb{R}}\phi(x)\partial_t\tilde{\rho}_h(x,t)\di x =&-\frac{1}{2} \int_{\mathbb{R}}\int_{\mathbb{R}}\frac{\partial_x\phi(x)-\partial_x\phi(y)}{x-y}\tilde{\rho}_h(x,t)\tilde{\rho}_h(y,t)\di x\di y\\
\leq& C\|\rho_0\|^2_{L^1}\|\partial_{xx}\phi\|_{L^\infty}\leq C\|\phi\|_{H^3},
\end{align*}
and hence
\begin{align*}
\|\partial_t\tilde{\rho}_h\|_{L^\infty(0,\infty;H^{-3}(\mathbb{R}))}\leq C, \quad 
\partial_t\tilde{\rho}_h\in L^\infty(0,\infty;H^{-3}(\mathbb{R})).
\end{align*}
The estimate for $\partial_t\rho_h$ in \eqref{eq:uniform3} can be obtained similarly via \eqref{eq:rhoh}.

\end{proof}
With Lemma \ref{lmm:uniformestimates}, we prove Theorem \ref{thm:weaktheorem}.

\begin{proof}[Proof of Theorem \ref{thm:weaktheorem}]
Let $h>0$ and $t\in(t_n,t_{n+1})$. From the definition \eqref{eq:tilderho}, we have
\[
\|\rho_h(t)-\tilde{\rho}_h(t)\|_{L^1}=\|G_\alpha(t-t_n)\tilde{\rho}_h(t)-\tilde{\rho}_h(t)\|_{L^1}\to 0~\textrm{ as }~h\to0.
\]
The proof of the above convergence result is the same as the estimate \eqref{eq:sindpendent} in Appendix \ref{apd:continuity}.
Hence, from Lemma \ref{lmm:uniformestimates}, there exist subsequences of $\{\tilde{\rho}_h\}_{h>0}$ and $\{\rho_h\}_{h>0}$ (still denoted as $\{\tilde{\rho}_h\}_{h>0}$ and $\{\rho_h\}_{h>0}$) such that they converge to a same function $\rho\in L^\infty(0,T; H^{1/2}(\mathbb{R}))\cap W^{1,\infty}(0,T;H^{-3}(\mathbb{R}))$:
\[
\tilde{\rho}_h, ~~\rho_h\overset{\ast}{\rightharpoonup} \rho~\textrm{ in }~L^\infty(0,T;H^{1/2}(\mathbb{R}))~\textrm{ as }~h\to0,
\]
and
\[
\partial_t\tilde{\rho}_h, ~~\partial_t\rho_h\overset{\ast}{\rightharpoonup} \partial_t\rho~\textrm{ in }~L^\infty(0,T;H^{-3}(\mathbb{R}))~\textrm{ as }~h\to0.
\]
Combining Lemma \ref{lmm:uniformestimates} and Aubin-Lions Lemma, we also know
\[
\tilde{\rho}_h,~~{\rho}_h\to \rho~\textrm{ in }~L^\infty(0,T;L_{loc}^2(\mathbb{R}))~\textrm{ as }~h\to0,
\]
and as a consequence of H\"older inequality on compact set, we have
\begin{equation}\label{eq:strong2}
\tilde{\rho}_h,~~{\rho}_h\to \rho~\textrm{ in }~L^\infty(0,T;L_{loc}^1(\mathbb{R}))~\textrm{ as }~h\to0.
\end{equation}
Hence, we have \eqref{eq:solutionestimate}.

Notice that $\partial_x[G_\alpha(s_t)\phi(t)](x)\to \partial_x\phi(x,t)$ as $h\to0$ for any $x\in\mathbb{R}$ and $t\in[0,T]$.
By the strong convergence of $\tilde{\rho}_h,~~{\rho}_h$ in \eqref{eq:strong2}, we can take the limit as $h\to0$ in \eqref{eq:approximateweak} and conclude that $\rho$ satisfies \eqref{eq:defweak1}. Hence, $\rho$ is a global weak solution to \eqref{eq:Hilbert}.

\end{proof}

\begin{remark}
Notice that we do not have uniqueness of weak solutions. For $\alpha=0, ~2$, one can use the convexity along Wasserstein geodesics of the energy \eqref{eq:interactionEnergy} to show uniqueness (see \cite{carrillo2012mass}).

Since both the fractional heat equation and \eqref{eq:Dyson} give spatial analytic solutions for strictly positive solutions, there is a high chance to obtain spatial analytic solutions for strictly positive initial data. Unfortunately, we do not have good a priori estimates to get better compactness results.
\end{remark}

\section{Global spatial analytic solutions for the subcritical case $1<\alpha\leq 2$}\label{sec:bessel}
In this section, we are going to obtain global  spatial analytic solutions to \eqref{eq:Hilbert} with $1<\alpha\leq 2$ and initial data $\rho_0\in L^{\frac{1}{\alpha-1}}(\mathbb{R})$. For simplicity, we only consider $\gamma=0$. The same results can be obtained for $\gamma>0$. First, let us introduce  Bessel potential spaces and give some properties of fractional heat kernel $G_\alpha$. For more details about Bessel potential spaces, one can refer to \cite[Chpater 6]{grafakos2009modern}. 

Similarly to the fractional Laplacian, the Bessel potential $(I-\partial_{xx})^{\ell/2} $ and Riesz potential $\Lambda^\ell :=(-\partial_{xx})^{\ell/2} $ for $\ell\in\mathbb{R}$ are defined via the Fourier transforms:
\[
[\mathcal{F}( (I-\partial_{xx})^{\ell/2} \rho)](\xi,t) = (1+|\xi|^2)^{\ell/2}[\mathcal{F}(\rho)](\xi,t),\quad [\mathcal{F}( \Lambda^\ell \rho)](\xi,t) = |\xi|^\ell[\mathcal{F}(\rho)](\xi,t).
\]
For $\ell\geq0$ and $1\leq q\leq \infty$, the Bessel potential spaces are defined by
\begin{align*}
H^{\ell,q}(\mathbb{R}):=\{f\in \mathcal{S}'(\mathbb{R}),~~(I-\partial_{xx})^{\ell/2} f\in L^q(\mathbb{R})\}=\{f\in L^q(\mathbb{R}),~~\Lambda^\ell f\in L^q(\mathbb{R})\},
\end{align*}
where $\mathcal{S}'(\mathbb{R})$ stands for the space of tempered distributions.
When $\ell$ is a positive integer and $1<q<\infty$, $H^{\ell,q}(\mathbb{R})$ coincides with the usual Sobolev spaces $W^{\ell,q}(\mathbb{R})$. For $f\in H^{\ell,q}(\mathbb{R})$, the homogeneous semi-norm is given by
\[
\|f\|_{\dot{H}^{\ell,q}}=\|\Lambda^\ell f\|_{L^q}.
\]

In the rest of this paper, for $\ell\geq 0$ we denote
\begin{align}\label{eq:constant}
C_{\ell,\alpha}:= \max\left\{\sup_{1\leq p\leq \infty}\|\Lambda^\ell G_\alpha(\cdot,1)\|_{L^p},~~\sup_{1\leq p\leq \infty}\|\Lambda^\ell \partial_xG_\alpha(\cdot,1)\|_{L^p}\right\}.
\end{align}
According to \cite[Lemma 2.2]{miao2008well}, we have
\[
|\Lambda^\ell G_\alpha(\cdot,1)|\leq C(1+|x|)^{-1-\ell},~~|\Lambda^\ell \partial_xG_\alpha(\cdot,1)|\leq C(1+|x|)^{-2-\ell},
\]
which implies $C_{\ell,\alpha}<\infty$. Later on we will use $C_{0,\alpha}$ and $C_{1,\alpha}$ for $\ell=0$ and $\ell=1$ separately. Moreover, since $\Lambda\sim \partial_x$, we also use $C_{1,\alpha}$ as the upper bound for $\sup_{1\leq p\leq \infty}\| \partial_x^2G_\alpha(\cdot,1)\|_{L^p}$.

We have the following useful estimates for fractional heat kernel. Although the proofs for similar estimates can be found in other papers (e.g., \cite{carrillo2008asymptotic,dong2008spatial}), we will give a complete proof here.
\begin{lemma}\label{lmm:key}
Let $f\in L^p(\mathbb{R})$ for $p\geq 1$. Assume $k\in\mathbb{N}$, $\ell\geq 0$ and $1\leq p\leq q\leq +\infty$. We have the following estimates:
\begin{align}\label{eq:fracdecay0}
\|\Lambda^{k\ell} G_\alpha(\cdot,t)\|_{L^q}\leq C^k_{\ell,\alpha}k^{\frac{k\ell+1}{\alpha}-\frac{1}{\alpha q}} t^{-\frac{k\ell+1}{\alpha}+\frac{1}{\alpha q}},
\end{align}
\begin{align}\label{eq:fracdecay00}
\|\Lambda^{k\ell} \partial_x G_\alpha(\cdot,t)\|_{L^q}\leq C^{k}_{\ell,\alpha}k^{\frac{k\ell+2}{\alpha}-\frac{1}{\alpha q}} t^{-\frac{k\ell+2}{\alpha}+\frac{1}{\alpha q}},
\end{align}
\begin{align}\label{eq:fracdecay1}
\|[\Lambda^{k\ell} G_\alpha(\cdot,t)]\ast f\|_{L^q}\leq C^k_{\ell,\alpha}k^{\frac{k\ell}{\alpha}+\frac{1}{\alpha}(\frac{1}{p}-\frac{1}{q})}  t^{-\frac{k\ell}{\alpha}+\frac{1}{\alpha}(\frac{1}{q}-\frac{1}{p})}\|f\|_{L^p},
\end{align}
\begin{align}\label{eq:fracdecay2}
\|[\Lambda^{k\ell} \partial_xG_\alpha(\cdot,t)]\ast f\|_{L^q}\leq C^{k}_{\ell,\alpha}k^{\frac{k\ell+1}{\alpha}+\frac{1}{\alpha}(\frac{1}{p}-\frac{1}{q})}  t^{-\frac{k\ell+1}{\alpha}+\frac{1}{\alpha}(\frac{1}{q}-\frac{1}{p})}\|f\|_{L^p},
\end{align}
and
\begin{align}\label{eq:key}
\lim_{t\to0}t^{\frac{1}{\alpha}(\frac{1}{p}-\frac{1}{q})}\|G_\alpha(\cdot,t)\ast f\|_{L^q}=0,\quad \forall q>p.
\end{align}

\end{lemma}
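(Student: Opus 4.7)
The plan is to reduce the whole lemma to two elementary facts about the fractional heat kernel: the scaling identity
\[
\|\Lambda^{m} G_\alpha(\cdot, \tau)\|_{L^r} = \tau^{-(m+1)/\alpha + 1/(\alpha r)} \|\Lambda^{m} G_\alpha(\cdot, 1)\|_{L^r}, \qquad m \geq 0,\ 1 \leq r \leq \infty,
\]
which is immediate from $\mathcal{F}(G_\alpha)(\xi,t) = e^{-\nu t|\xi|^\alpha}$ via the change of variable $\xi \mapsto \tau^{-1/\alpha}\xi$, and the semigroup decomposition $G_\alpha(\cdot, t) = G_\alpha(\cdot, t/k)^{\ast k}$, which lets us distribute the $k\ell$ fractional derivatives as one $\Lambda^\ell$ on each of the $k$ factors.

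For \eqref{eq:fracdecay0} I would apply Young's convolution inequality to this $k$-fold convolution with exponents $(q, 1, 1, \dots, 1)$ (a valid choice since $1/q + (k-1) = (k-1) + 1/q$); this leaves one factor $\|\Lambda^\ell G_\alpha(\cdot, t/k)\|_{L^q}$ and $k-1$ copies of $\|\Lambda^\ell G_\alpha(\cdot, t/k)\|_{L^1}$. The scaling formula turns each factor into a power of $t/k$ times a constant bounded by $C_{\ell,\alpha}$; collecting the $(t/k)$-exponents produces exactly $k^{(k\ell+1)/\alpha - 1/(\alpha q)} t^{-(k\ell+1)/\alpha + 1/(\alpha q)}$. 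The estimate \eqref{eq:fracdecay00} is the same argument with the extra $\partial_x$ parked on the $L^q$ factor, which costs only an additional $C_{\ell,\alpha}$ (by the very definition of the constant) and shifts the $t$-exponent by $1/\alpha$. The two convolution bounds \eqref{eq:fracdecay1}--\eqref{eq:fracdecay2} then follow from Young's inequality $\|g \ast f\|_{L^q} \leq \|g\|_{L^r}\|f\|_{L^p}$ with $1/r = 1 + 1/q - 1/p \in [0, 1]$ (valid because $p \leq q$), applied to the kernels just bounded, with $q$ replaced by $r$; the identity $1/(\alpha r) = 1/\alpha - (1/p - 1/q)/\alpha$ makes the exponents of $k$ and $t$ rearrange into exactly the claimed ones.

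For \eqref{eq:key}, the version of \eqref{eq:fracdecay1} with $k=0$ (or equivalently applying Young directly to the kernel $G_\alpha(\cdot,t)$) shows that the map $\Phi_t : f \mapsto t^{(1/p - 1/q)/\alpha} \|G_\alpha(\cdot, t) \ast f\|_{L^q}$ is uniformly bounded from $L^p$ to $\mathbb{R}$ in $t > 0$. Since $C_c^\infty(\mathbb{R})$ is dense in $L^p$ and sits inside $L^p \cap L^q$, a standard $3\varepsilon$-argument reduces the limit claim to $f \in C_c^\infty$, for which $\|G_\alpha(\cdot,t) \ast f\|_{L^q} \leq \|G_\alpha(\cdot,t)\|_{L^1} \|f\|_{L^q} = \|f\|_{L^q}$, while $t^{(1/p-1/q)/\alpha} \to 0$ because $q > p$. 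The only delicate point in the whole argument is the bookkeeping of the $k$-dependent constant in \eqref{eq:fracdecay0}: the asymmetric Young's choice $(q, 1, \dots, 1)$ is essential, since a symmetric partition either inflates the $k$-exponent, which would ruin the later spatial analyticity bound of the form $K^n n^n t^{-n/\alpha - \cdots}$, or fails to land the product in $L^q$ at all.
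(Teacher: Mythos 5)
Your proposal is correct and follows essentially the same route as the paper: the Fourier-side scaling identity reduces everything to $t=1$ (equivalently, to kernels at time $t/k$), the $k$-fold semigroup factorization with one $\Lambda^\ell$ per factor plus the asymmetric Young exponents $(q,1,\dots,1)$ yields \eqref{eq:fracdecay0}--\eqref{eq:fracdecay00} with the constant $C_{\ell,\alpha}^k$, a further application of Young's inequality gives \eqref{eq:fracdecay1}--\eqref{eq:fracdecay2}, and \eqref{eq:key} follows by density in $L^p$ together with the uniform-in-$t$ bound. Your explicit remark on why the asymmetric Young partition is the right one is a useful clarification of a point the paper leaves implicit, but it is not a different argument.
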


\begin{proof}
For $\ell\geq 0$, we claim that
\begin{align}\label{eq:LambdaGalpha}
\Lambda^{k\ell} G_\alpha(x,t)=t^{-\frac{k\ell+1}{\alpha}}(\Lambda^{k\ell}  G_\alpha)(t^{-\frac{1}{\alpha}}x,1),\quad \Lambda^{k\ell}\partial_x G_\alpha(x,t)=t^{-\frac{k\ell+2}{\alpha}}(\Lambda^{k\ell}\partial_x  G_\alpha)(t^{-\frac{1}{\alpha}}x,1).
\end{align}
Actually, we have
\[
[\mathcal{F}(\Lambda^{k\ell} G_\alpha)(\xi,t)=|\xi|^{k\ell}e^{-\nu t|\xi|^{\alpha}},
\]
and by changing of variable $\eta=t^{\frac{1}{\alpha}}\xi$ we obtain
\begin{equation*}
\begin{aligned}
\Lambda^{k\ell} G_\alpha(x,t)=&\mathcal{F}^{-1}[\mathcal{F}(\Lambda^{k\ell} G_\alpha)](x,t)=\int_{\mathbb{R}}e^{i\xi x}|\xi|^{k\ell}e^{-\nu t|\xi|^{\alpha}}\di \xi\\
=&t^{-\frac{k\ell+1}{\alpha}}\int_{\mathbb{R}}e^{i t^{-\frac{1}{\alpha}}x\cdot \eta}|\eta|^{k\ell}e^{-\nu \eta}\di \eta=t^{-\frac{k\ell+1}{\alpha}}(\Lambda^{k\ell}  G_\alpha)(t^{-\frac{1}{\alpha}}x,1).
\end{aligned}
\end{equation*}
The proof for the second equality in \eqref{eq:LambdaGalpha} is the same.
Hence, for $1\leq q\leq \infty$
\begin{equation}\label{eq:kell}
\begin{aligned}
\|\Lambda^{k\ell} G_\alpha(\cdot,t)\|_{L^q}=t^{-\frac{k\ell+1}{\alpha}+\frac{1}{\alpha q}}\|\Lambda^{k\ell} G_\alpha(\cdot,1)\|_{L^q},
\end{aligned}
\end{equation}
and
\begin{equation}\label{eq:kell2}
\begin{aligned}
\|\Lambda^{k\ell}\partial_x G_\alpha(\cdot,t)\|_{L^q}=t^{-\frac{k\ell+2}{\alpha}+\frac{1}{\alpha q}}\|\Lambda^{k\ell} \partial_x G_\alpha(\cdot,1)\|_{L^q},
\end{aligned}
\end{equation}
which implies
\begin{align}
\left\|\Lambda^{\ell} G_\alpha\left(\cdot,\frac{1}{k}\right)\right\|_{L^p}=\left(\frac{1}{k}\right)^{-\frac{\ell+1}{\alpha}+\frac{1}{\alpha p} }\|\Lambda^{\ell} G_\alpha(\cdot,1)\|_{L^p}\leq C_{\ell,\alpha}k^{\frac{\ell+1}{\alpha}-\frac{1}{\alpha p} },
\end{align}
and
\begin{align}
\left\|\Lambda^\ell\partial_x G_\alpha\left(\cdot,\frac{1}{k}\right)\right\|_{L^p}=\left(\frac{1}{k}\right)^{-\frac{\ell+2}{\alpha}+\frac{1}{\alpha p}}\|\Lambda^\ell  \partial_x G_\alpha(\cdot,1)\|_{L^p}\leq C_{\ell,\alpha}k^{\frac{\ell+2}{\alpha}-\frac{1}{\alpha p} }.
\end{align}
Due to 
\[
\Lambda^{k\ell} G_\alpha(\cdot,1)=\Lambda^{\ell} G_\alpha\left(\cdot,\frac{1}{k}\right)\ast\Lambda^{\ell} G_\alpha\left(\cdot,\frac{1}{k}\right)\ast\cdots\ast\Lambda^{\ell} G_\alpha\left(\cdot,\frac{1}{k}\right)
\]
and
\[
\Lambda^{k\ell}\partial_x G_\alpha(\cdot,1)=\Lambda^{\ell} G_\alpha\left(\cdot,\frac{1}{k}\right)\ast\cdots\ast\Lambda^{\ell}  G_\alpha\left(\cdot,\frac{1}{k}\right)\ast \Lambda^\ell \partial_xG_\alpha\left(\cdot,\frac{1}{k}\right),
\]
by Young's convolution inequality, we obtain \eqref{eq:fracdecay0} and \eqref{eq:fracdecay00}. Combining \eqref{eq:fracdecay0} and Young's inequality for convolution, we  obtain  \eqref{eq:fracdecay1} and \eqref{eq:fracdecay2}.

The proof of \eqref{eq:key} follows by a similar way as \cite[Lemma 2.1]{carrillo2008asymptotic}. 
Assume $f_n\in L^p(\mathbb{R})\cap L^q(\mathbb{R})$ such that $f_n\to f$ in $L^p(\mathbb{R})$. From \eqref{eq:fracdecay1} with $\ell=0$, we have
\[
\|G_\alpha(\cdot,t)\ast f_n\|_{L^q}\leq C\|f_n\|_{L^q},\quad \forall n\in\mathbb{N}.
\]
Hence, for $q>p$ we obtain
\[
\lim_{t\to0}t^{\frac{1}{\alpha}(\frac{1}{p}-\frac{1}{q})}\|G_\alpha(\cdot,t)\ast f_n\|_{L^q}=0,\quad \forall n\in\mathbb{N}.
\]
According to \eqref{eq:fracdecay1} with $\ell=0$, we also have
\begin{align}\label{eq:positive}
t^{\frac{1}{\alpha}(\frac{1}{p}-\frac{1}{q})}\|G_\alpha(\cdot,t)\ast f_n-G_\alpha(\cdot,t)\ast f\|_{L^q}\leq C\|f_n-f\|_{L^p},
\end{align}
which converges to zero independent of $t>0$. This implies \eqref{eq:key}.
\end{proof}

\subsection{Local existence and uniqueness of mild solutions}\label{sec:local}
Next we are going to prove local existence and uniqueness of mild solutions to \eqref{eq:Hilbert} of the form \eqref{eq:mildsolution} in space $X_T$ defined by \eqref{eq:space1}. For $1<\alpha\leq 2$, assume the initial data $\rho_0\in L^{\frac{1}{\alpha-1}}(\mathbb{R})$. Take $q=\frac{2}{\alpha-1}$ in \eqref{eq:key} and we have
\begin{align}\label{eq:key2}
\lim_{t\to0}t^{\frac{\alpha -1}{2\alpha}}\|G_\alpha(\cdot,t)\ast \rho_0\|_{\frac{2}{\alpha-1}}=0.
\end{align}
Define the operator $S$:
\[
(S\rho)(x,t):=G_\alpha(\cdot,t)\ast\rho_0-\int_0^t \partial_xG_\alpha(\cdot,t-s)\ast(\rho(s) H\rho(s)) \di s.
\]
We have the following theorem: 
\begin{theorem}\label{thm:contraction}
Let $0\leq \rho_0\in L^{\frac{1}{\alpha-1}}(\mathbb{R})$. For $a>0$ small enough and $T>0$ satisfying 
\begin{align}\label{eq:important}
\sup_{0<t\leq T}t^{\frac{\alpha-1}{2\alpha}}\|G_\alpha(\cdot,t)\ast \rho_0\|_{L^{\frac{2}{\alpha-1}}}\leq a,
\end{align}
there exists a unique mild solution $\rho$ to \eqref{eq:Hilbert} in the following subset of $X_T$:
\begin{align*}
X_{T}^a:=\left\{f\in X_{T}:~~\sup_{0< t\leq T}\|f(t)\|_{L^{\frac{1}{\alpha-1}}}\leq 2\|\rho_0\|_{L^\frac{1}{\alpha-1}},~~\sup_{0<t\leq T}t^{\frac{\alpha-1}{2\alpha}}\|f(t)\|_{L^{\frac{2}{\alpha-1}}}\leq 2a\right\}.
\end{align*}
Moreover, we have $\rho\in C([0,T];L^{\frac{1}{\alpha-1}}(\mathbb{R}))$ and $\rho(x,0)=\rho_0(x),~~x\in\mathbb{R}.$
\end{theorem}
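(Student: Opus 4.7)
The plan is to apply Banach's fixed-point theorem to the operator $S$ on the closed, convex subset $X_T^a\subset X_T$. By \eqref{eq:key2}, for any prescribed $a>0$ one can pick $T>0$ sufficiently small so that \eqref{eq:important} holds; this controls the free evolution $G_\alpha(\cdot,t)\ast\rho_0$ inside $X_T^a$. What remains is to estimate the nonlinear term
\[
N(\rho)(t):=\int_0^t\partial_xG_\alpha(\cdot,t-s)\ast(\rho(s)H\rho(s))\,\di s
\]
in both norms defining $X_T$.

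The three tools needed are (i) the $L^p$-boundedness of the Hilbert transform for $1<p<\infty$, applicable to $L^{2/(\alpha-1)}$ since $1<\alpha\le 2$; (ii) H\"older's inequality, giving $\|\rho H\rho\|_{L^{1/(\alpha-1)}}\le C\|\rho\|_{L^{2/(\alpha-1)}}^2$; and (iii) the kernel estimate \eqref{eq:fracdecay2} of Lemma \ref{lmm:key} with $k=0$,
\[
\|\partial_xG_\alpha(\cdot,\tau)\ast g\|_{L^q}\le C\,\tau^{-\frac{1}{\alpha}+\frac{1}{\alpha}(\frac{1}{q}-\frac{1}{p})}\|g\|_{L^p},\qquad 1\le p\le q\le\infty.
\]
Applying (iii) with $(p,q)=(1/(\alpha-1),1/(\alpha-1))$ and with $(p,q)=(1/(\alpha-1),2/(\alpha-1))$, and substituting the bound $\|\rho(s)\|_{L^{2/(\alpha-1)}}\le 2a\,s^{-(\alpha-1)/(2\alpha)}$ available in $X_T^a$, reduces everything to Beta-type integrals $\int_0^t(t-s)^{-\beta}s^{-\gamma}\,\di s$. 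The particular exponents that arise are $\beta=1/\alpha$ and $\gamma=(\alpha-1)/\alpha$, satisfying $\beta+\gamma=1$ with both strictly less than one precisely when $\alpha>1$. This is the scale-invariant source of the choice $L^{1/(\alpha-1)}$ and the reason we cannot cross into the critical regime here. A direct computation then produces
\[
\|N(\rho)(t)\|_{L^{1/(\alpha-1)}}\le C a^2,\qquad t^{(\alpha-1)/(2\alpha)}\|N(\rho)(t)\|_{L^{2/(\alpha-1)}}\le C a^2.
\]

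Invariance $S(X_T^a)\subset X_T^a$ follows by combining these with $\|G_\alpha(t)\ast\rho_0\|_{L^{1/(\alpha-1)}}\le\|\rho_0\|_{L^{1/(\alpha-1)}}$ (Young's inequality, since $G_\alpha(\cdot,t)$ is a probability density) and with \eqref{eq:important}, once $a$ is taken small enough that $Ca^2\le\min\{\|\rho_0\|_{L^{1/(\alpha-1)}},a\}$. For contraction, expand the bilinear difference $\rho_1H\rho_1-\rho_2H\rho_2=(\rho_1-\rho_2)H\rho_1+\rho_2H(\rho_1-\rho_2)$ and repeat the same estimates to obtain $\|S\rho_1-S\rho_2\|_{X_T}\le C a\|\rho_1-\rho_2\|_{X_T}$; shrinking $a$ so that $Ca<1$ makes $S$ a strict contraction, and Banach's theorem supplies the unique fixed point $\rho\in X_T^a$.

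Finally, for continuity at $t=0$, introduce $\phi(t):=t^{(\alpha-1)/(2\alpha)}\|\rho(t)\|_{L^{2/(\alpha-1)}}$ and $\phi_0(t):=t^{(\alpha-1)/(2\alpha)}\|G_\alpha(t)\ast\rho_0\|_{L^{2/(\alpha-1)}}$, the latter tending to $0$ as $t\to0^+$ by \eqref{eq:key2}. The $L^{2/(\alpha-1)}$ estimate gives $\phi(t)\le\phi_0(t)+Ca\,\phi(t)$, whence $\phi(t)\to0$ once $Ca<1$. Feeding this back into the $L^{1/(\alpha-1)}$ bound for $N(\rho)$ via dominated convergence yields $\|N(\rho)(t)\|_{L^{1/(\alpha-1)}}\to 0$, while $G_\alpha(\cdot,t)\ast\rho_0\to\rho_0$ in $L^{1/(\alpha-1)}$ by the standard approximation-to-identity (valid because $1/(\alpha-1)<\infty$). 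Continuity on all of $[0,T]$ then follows from dominated convergence in the convolution representations. The principal obstacle throughout is the delicate interplay between the $L^p$-mapping properties of the Hilbert transform, the $L^p$-scaling of $\partial_xG_\alpha$, and the time-singular bound coming from $X_T^a$; every exponent is forced to balance exactly at the subcritical threshold $\alpha>1$.
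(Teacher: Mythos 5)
Your proposal is correct and follows essentially the same route as the paper: Banach's fixed point theorem on $X_T^a$, with invariance and contraction both reduced to the kernel bound \eqref{eq:fracdecay2} (with $\ell=0$), the $L^p$-boundedness of the Hilbert transform, H\"older's inequality, and Beta integrals with exponents $\tfrac{1}{\alpha}$ and $\tfrac{\alpha-1}{\alpha}$, plus the same bilinear splitting $\rho_1H\rho_1-\rho_2H\rho_2=(\rho_1-\rho_2)H\rho_1+\rho_2H(\rho_1-\rho_2)$. The one place you genuinely diverge is the continuity at $t=0$: the paper re-runs the construction on a smaller interval $[0,\tilde T]$ with a smaller parameter $\tilde a$, identifies the two solutions by uniqueness, and sends $\tilde a\to 0$ via \eqref{eq:key2} combined with \eqref{eq:rho2}, whereas you close an absorption inequality for $\phi(t)=t^{\frac{\alpha-1}{2\alpha}}\|\rho(t)\|_{L^{2/(\alpha-1)}}$. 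Your version works and is arguably more direct, but the inequality should read $\phi(t)\le\phi_0(t)+Ca\sup_{0<s\le t}\phi(s)$ rather than $\phi(t)\le\phi_0(t)+Ca\,\phi(t)$, since the Duhamel term at time $t$ involves $\rho(s)$ for all $s\le t$; one must take the supremum over $t\in(0,\tau]$ before absorbing, and then let $\tau\to0$. Also, the continuity of $S\rho$ on $(0,T]$ in $L^{1/(\alpha-1)}$, which you dispatch with an appeal to dominated convergence, is the part the paper relegates to Appendix \ref{apd:continuity}; it genuinely requires the splitting of the time integral and the approximate-identity estimate for $G_\alpha(\cdot,t-\tau)$ carried out there, so a one-line justification is too thin, though the substance of the argument is standard.
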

There are different versions of the local existence of mild solutions for different equations, and the main ideas are similar; see, e.g. \cite{biler1995cauchy,biler2010blowup,carrillo2008asymptotic}. Since some estimates in the proof of this theorem are useful in the rest of this paper, we are going to provide a complete proof here. 
\begin{proof}
To show the existence, we only need to prove that $S:X_T^a\to X_T^a$ is a contraction mapping for $a$ small enough.

\textbf{Step 1}: Assume $\rho\in X_{T}^a$ and we are going to show that $S\rho\in X_{T}^a$ for $a$ small enough. 

\emph{Estimate of $\|S\rho(t)\|_{L^\frac{1}{\alpha-1}}$:}
From  \eqref{eq:fracdecay2} with $\ell=0$ and $q=p=\frac{1}{\alpha-1}$, by the M. Riesz Theorem for $L^p$ ($1<p<\infty$) boundedness of Hilbert transform we obtain
\begin{align*}
\|\partial_xG_\alpha(\cdot,t-s)\ast (\rho(s)H\rho(s))\|_{L^{\frac{1}{\alpha-1}}}&\leq C(t-s)^{-\frac{1}{\alpha}}\|\rho(s)H\rho(s)\|_{L^{\frac{1}{\alpha-1}}}\\
&\leq C(t-s)^{-\frac{1}{\alpha}}\|\rho(s)\|^2_{L^{\frac{2}{\alpha-1}}}\\
&\leq C(t-s)^{-\frac{1}{\alpha}}s^{-\frac{\alpha-1}{\alpha}}\sup_{0<t\leq T}s^{\frac{\alpha-1}{\alpha}}\|\rho(s)\|^2_{L^{\frac{2}{\alpha-1}}},
\end{align*}
which implies
\begin{multline}\label{eq:rho2}
\int_0^t \|\partial_xG_\alpha(\cdot,t-s)\ast (\rho(s)H\rho(s))\|_{L^{\frac{1}{\alpha-1}}}\di s\leq C a^2\int_0^t(t-s)^{-\frac{1}{\alpha}}s^{-\frac{\alpha-1}{\alpha}}\di s\\
=C\mathcal{B}\left(\frac{1}{\alpha},\frac{\alpha-1}{\alpha}\right)a^2,
\end{multline}
where $\mathcal{B}(\frac{1}{\alpha},\frac{\alpha-1}{\alpha})$ is a Beta function $\mathcal{B}(\alpha,\beta)=\int_0^1s^{\alpha-1}(1-s)^{\beta-1}\di s$ with $\alpha=\frac{1}{\alpha}$ and $\beta=\frac{\alpha-1}{\alpha}$. For $a$ small enough, we have
\[
\|S\rho(t)\|_{L^{\frac{1}{\alpha-1}}}\leq \|G_\alpha(\cdot,t)\ast\rho_0\|_{L^{\frac{1}{\alpha-1}}}+C\mathcal{B}\left(\frac{1}{\alpha},\frac{\alpha-1}{\alpha}\right)a^2<2\|\rho_0\|_{\frac{1}{\alpha-1}}, ~~0\leq t\leq T.
\]

\emph{Estimate of $\sup_{0<t\leq T}t^{\frac{\alpha-1}{2\alpha}}\|S\rho(t)\|_{L^{\frac{2}{\alpha-1}}}$:}
From  \eqref{eq:fracdecay2} with $\ell=0$ and  $q=\frac{2}{\alpha-1},~~p=\frac{1}{\alpha-1}$, we have
\begin{align*}
\|\partial_xG_\alpha(\cdot,t-s)\ast (\rho(s)H\rho(s))\|_{L^{\frac{2}{\alpha-1}}}&\leq C(t-s)^{-\frac{\alpha+1}{2\alpha}}\|\rho(s)H\rho(s)\|_{L^{\frac{1}{\alpha-1}}}\\
&\leq C(t-s)^{-\frac{\alpha+1}{2\alpha}}s^{-\frac{\alpha-1}{\alpha}}\sup_{0<t\leq T}s^{\frac{\alpha-1}{\alpha}}\|\rho(s)\|^2_{L^{\frac{2}{\alpha-1}}},
\end{align*}
which implies
\begin{multline}
\sup_{0<t\leq T}t^{\frac{\alpha-1}{2\alpha}}\int_0^t \|\partial_xG_\alpha(\cdot,t-s)\ast  (\rho(s)H\rho(s))\|_{L^{\frac{2}{\alpha-1}}}\di s \\
\leq Ca^2 \cdot \sup_{0<t\leq T}t^{\frac{\alpha-1}{2\alpha}}\int_0^t (t-s)^{-\frac{\alpha+1}{2\alpha}}s^{-\frac{\alpha-1}{\alpha}} \di s \leq C\mathcal{B}\left(\frac{1}{\alpha},\frac{\alpha-1}{2\alpha}\right)a^2.
\end{multline}
Hence, there exists $a>0$ small enough such that
\begin{align}\label{eq:in1}
\sup_{0<t\leq T}t^{\frac{\alpha-1}{2\alpha}}\|S\rho(t)\|_{L^{\frac{2}{\alpha-1}}}\leq \sup_{0<t\leq T}t^{\frac{\alpha-1}{2\alpha}}\|G_\alpha(\cdot,t)\ast\rho_0\|_{L^{\frac{2}{\alpha-1}}}+C\mathcal{B}\left(\frac{1}{\alpha},\frac{\alpha-1}{2\alpha}\right)a^2<2a.
\end{align}

Since the proof of the continuity of $S\rho$ with respect to time $t\in(0,T]$ in $L^{\frac{1}{\alpha-1}}(\mathbb{R})$ is routine and tedious, we put it in appendix  \ref{apd:continuity}.

\textbf{Step 2.} We are going to show that $S$ is a contraction mapping. 

Consider $\rho_1,~\rho_2\in X_T^a$.
We have
\begin{equation}\label{eq:contraction}
\begin{aligned}
\|S\rho_1-&S\rho_2\|_{X_T}\leq \left\|\int_0^t\partial_xG_\alpha(\cdot,t-s)\ast \left[(\rho_1(s)-\rho_2(s))H\rho_1(s)\right]\di s\right\|_{X_T}\\
&+ \left\|\int_0^t\partial_xG_\alpha(\cdot,t-s)\ast \left[\rho_2(s)H (\rho_1(s)-\rho_2(s))\right]\di s\right\|_{X_T}
\end{aligned}
\end{equation}
Similarly to Step 1, we have the following estimates for the first term on the right hand side of \eqref{eq:contraction}:
\begin{align*}
&\sup_{0\leq t\leq T}\left\|\int_0^t\partial_xG_\alpha(\cdot,t-s)\ast \left[(\rho_1(s)-\rho_2(s))H\rho_1(s)\right]\di s\right\|_{L^{\frac{1}{\alpha-1}}}\\
\leq &C \mathcal{B}\left(\frac{1}{\alpha},\frac{\alpha-1}{\alpha}\right)a \|\rho_1-\rho_2\|_{X_T},
\end{align*}
and
\begin{align*}
&\sup_{0\leq t\leq T}t^{\frac{\alpha-1}{2\alpha}}\left\|\int_0^t\partial_xG_\alpha(\cdot,t-s)\ast \left[(\rho_1(s)-\rho_2(s))H\rho_1(s)\right]\di s\right\|_{L^{\frac{2}{\alpha-1}}}\\
\leq &C \mathcal{B}\left(\frac{1}{\alpha},\frac{\alpha-1}{2\alpha}\right)a\left\|\rho_1-\rho_2\right\|_{X_T}.
\end{align*}
We have similar estimate for the second term in the right hand of \eqref{eq:contraction}. Hence
\begin{align}\label{eq:cont}
\left\|S\rho_1 -S\rho_2 \right\|_{X_T}\leq Ca\left\|\rho_1-\rho_2\right\|_{X_T}.
\end{align}
And $S$ is a contraction mapping for small enough $a>0$.

\textbf{Step 3.} In this step, we are going to show the time continuity at $t=0$. Since 
\[
\lim_{t\to0}\|G_\alpha(\cdot,t)\ast\rho_0-\rho_0\|_{L^{\frac{1}{\alpha-1}}}=0,
\]
hence we only need to show 
\begin{align}\label{eq:last}
\lim_{t\to0}\left\|\int_0^t\partial_xG_\alpha(\cdot,t-s)\ast(\rho(s)H\rho(s))\di s\right\|_{L^{\frac{1}{\alpha-1}}}=0.
\end{align}
Let $\rho$ be the solution constructed by Step 1 and Step 2. Consider another two positive numbers $\tilde{a}<a$ and $\tilde{T}<T$ such that \eqref{eq:important} holds for $\tilde{a}$ and $\tilde{T}$. We could obtain another solution $\tilde{\rho}$ in the corresponding space $X_{\tilde{T}}^{\tilde{a}}$. Similarly to \eqref{eq:cont}, we obtain
\[
\|\tilde{\rho}-\rho\|_{X_{\tilde{T}}}\leq Ca\|\tilde{\rho}-\rho\|_{X_{\tilde{T}}},
\]
which implies $\tilde{\rho}(t)=\rho(t)$ for $0<t\leq \tilde{T}$.
Due to \eqref{eq:key2}, as $\tilde{T}\to0$, we could choose $\tilde{a}\to0$
Combining \eqref{eq:rho2} for $\tilde{a}\to0$, we have \eqref{eq:last}.

\end{proof}

\subsection{Regularity, nonnegativity, and analyticity}\label{sec:regularity}
In this section, we improve the regularity of mild solutions step by step and obtain the nonnegativity. Then, we show the spatial analyticity of solutions.

 For the regularity of mild solutions, the strategy is as follows. We first show the hypercontrativity estimate that the mild solutions belong to $L^q$ spaces for any $\frac{1}{\alpha-1}\leq q\leq \infty$. Then, we estimate the derivative of mild solutions. The time decay property can only ensure the improvement of fractional step $0<\ell<\alpha-1$ for the derivative. Step by step, we could improve the regularity of solutions to any order we want. For the nonegativity, we follow the same idea as in \cite[Lemma 2.7]{li2011one}. However, comparing with  \cite[Lemma 2.7]{li2011one} where the initial data $\rho_0\in L^{\frac{1}{\alpha-1}}(\mathbb{R})\cap L^p(\mathbb{R})$ for some $\frac{1}{\alpha-1}<p<\infty$, we only need the initial data $\rho_0\in L^{\frac{1}{\alpha-1}}(\mathbb{R})$. 
We have the following theorem:
\begin{theorem}[Regularity and nonnegativity]\label{thm:regularity}
Let $0\leq \rho_0\in  L^{\frac{1}{\alpha-1}}(\mathbb{R})$. Then, the mild solution $\rho$ obtained by Theorem \ref{thm:contraction} is a strong solution for $t>0$ belonging to $C^\infty((0,T];H^{\theta,q}(\mathbb{R}))$ for any $\theta>0$ and $\frac{1}{\alpha-1}\leq q\leq \infty$. The following time decay estimates for derivatives  hold:
\begin{align}\label{eq:Wqestimate}
\|\rho(t)\|_{\dot{H}^{\theta,q}(\mathbb{R})}\leq Ct^{-\frac{\theta}{\alpha}-1+\frac{1}{\alpha}(1+\frac{1}{q})},\quad  \frac{1}{\alpha-1}\leq q\leq \infty,~~0<t\leq T,
\end{align}
and 
\begin{align}\label{eq:decaypower}
\|\partial_x^n\rho(t)\|_{L^\infty}\leq Ct^{-\frac{n}{\alpha}-1+\frac{1}{\alpha}(1+\frac{1}{q})},\quad\forall n\in\mathbb{N},~~0<t\leq T.
\end{align}
Moreover, $\rho(x,t)\geq0$ for any $t\in[0,T]$, $x\in\mathbb{R}$.
\end{theorem}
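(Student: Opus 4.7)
The plan is to use the mild solution formula \eqref{eq:mildsolution} as the starting point and bootstrap both integrability and regularity from what is already known in $X_T$. First I would establish hypercontractivity: for each $q\in[\tfrac{1}{\alpha-1},\infty]$, show that $\rho(t)\in L^q(\mathbb{R})$ with the predicted decay $\|\rho(t)\|_{L^q}\leq C t^{-1+\frac{1}{\alpha}(1+\frac{1}{q})}$. Starting from $q_0=\tfrac{2}{\alpha-1}$, where the bound is built into $X_T$, I would iterate on $q$: split $\rho=G_\alpha(\cdot,t)\ast\rho_0+w$, bound the linear piece by \eqref{eq:fracdecay1}, and for the nonlinear piece combine H\"older with the $L^r$-boundedness of the Hilbert transform to get $\|\rho H\rho\|_{L^{q_0/2}}\lesssim\|\rho(s)\|_{L^{q_0}}^2$; then \eqref{eq:fracdecay2} controls the convolution with $\partial_xG_\alpha(\cdot,t-s)$ as long as $\tfrac{1}{\alpha}(1+\tfrac{2}{q_0}-\tfrac{1}{q_1})<1$. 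A finite chain of such jumps reaches any finite $q$, and the endpoint $q=\infty$ comes from a last convolution bounded directly in $L^\infty$.

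Next comes the fractional bootstrap in Bessel potential spaces. Apply $\Lambda^\ell$ to \eqref{eq:mildsolution} with some small $\ell\in(0,\alpha-1)$ and estimate
\[
\|\Lambda^\ell\rho(t)\|_{L^q}\leq\|\Lambda^\ell G_\alpha(\cdot,t)\ast\rho_0\|_{L^q}+\int_0^t\|\Lambda^\ell\partial_xG_\alpha(\cdot,t-s)\ast(\rho H\rho)(s)\|_{L^q}\,\di s
\]
by \eqref{eq:fracdecay1}--\eqref{eq:fracdecay2}. The weight $(t-s)^{-(1+\ell)/\alpha}$ remains time-integrable against the $L^q$-decay of $\rho H\rho$ precisely because $\ell<\alpha-1$, yielding \eqref{eq:Wqestimate} at level $\theta=\ell$. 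To iterate, I distribute $\Lambda^{\theta+\ell}$ across the product via a fractional Leibniz inequality, putting the high-derivative factor in a large $L^q$ with slight decay and the low-derivative one in $L^\infty$ using the hypercontractivity from the first step; each pass advances $\theta$ by $\ell$, so finitely many iterations reach any prescribed order. The bound \eqref{eq:decaypower} on $\partial_x^n\rho$ then follows from a $\dot H^{n+\sigma,q}$ estimate with $\sigma q>1$ by Sobolev embedding, and time smoothness is recovered by repeatedly reading the equation $\partial_t\rho=-\partial_x(\rho H\rho)-\nu\Lambda^\alpha\rho$ in the spatial norms just obtained.

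For the nonnegativity I would mimic the argument in \cite[Lemma 2.7]{li2011one}, but without their extra hypothesis $\rho_0\in L^p$ for $p>\tfrac{1}{\alpha-1}$. The idea is to approximate $\rho_0\geq 0$ by smooth strictly positive $\rho_{0,\varepsilon}\to\rho_0$ in $L^{1/(\alpha-1)}$, produce smooth mild solutions $\rho_\varepsilon$ via Theorem \ref{thm:contraction} together with the regularity just proved, and test the equation against $(\rho_\varepsilon)_-^{p-1}$. Using the C\'ordoba--C\'ordoba pointwise positivity $\langle\Lambda^\alpha u,(u)_-^{p-1}\rangle\geq 0$ together with an integration by parts exploiting $\partial_xH=\Lambda$ in the nonlinear term forces $(\rho_\varepsilon)_-\equiv 0$; the contraction estimate in Theorem \ref{thm:contraction} then propagates $\rho_\varepsilon\to\rho$ in $X_T$, and $\rho\geq 0$ follows in the limit.

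The main obstacle will be organizing the iteration in the second step: because each pass gains only $\ell<\alpha-1\leq 1$ Bessel derivatives, I must verify that the decay exponent in \eqref{eq:Wqestimate} remains uniformly controlled after arbitrarily many iterations and that the fractional Leibniz rule together with the Hilbert transform does not sacrifice endpoint integrability at $q=\infty$. This is precisely what is delicate compared with the classical case $\alpha=2$, where a full derivative can be gained per step.
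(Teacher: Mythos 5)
Your overall architecture matches the paper's: hypercontractivity first, then a fractional bootstrap gaining $\ell<\alpha-1$ Bessel derivatives per pass via the Kato--Ponce inequality, $L^\infty$ bounds on $\partial_x^n\rho$ from the $H^{\theta,q}$ estimates, time regularity read off from the equation, and nonnegativity by approximating $\rho_0$ with smooth positive data and passing to the limit through the contraction estimate of Theorem \ref{thm:contraction}. (Minor differences: the paper reaches every finite $q$ from $p=\tfrac{1}{\alpha-1}$ in a single convolution rather than a chain of jumps, and for nonnegativity it invokes a cited lemma for the smooth approximants instead of your $(u)_-^{p-1}$ testing; both are harmless variations.)

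However, the iteration step as you describe it has a genuine gap, and it is exactly the point you flag as ``the main obstacle'' without resolving. If you estimate the Duhamel term over all of $[0,t]$ by placing $\Lambda^\ell\partial_x$ on the kernel and $\Lambda^\beta$ on the nonlinearity, you get
\[
\int_0^t (t-s)^{-\frac{\ell+1}{\alpha}}\,\|\Lambda^\beta(\rho H\rho)(s)\|_{L^q}\,\di s
\lesssim \int_0^t (t-s)^{-\frac{\ell+1}{\alpha}}\, s^{-2-\frac{\beta}{\alpha}+\frac{1}{\alpha}(2+\frac{1}{q})}\,\di s,
\]
and the $s$-integral diverges at $s=0$ as soon as $\beta\geq 2-\alpha+\tfrac1q$ --- for $\alpha$ near $2$ this already happens at the second pass, so the iteration does not close. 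The missing device in the paper's proof is to split the integral at $\delta=t/2$: on $[t/2,t]$ one distributes $\Lambda^\beta$ over the product as you propose (the kernel singularity $(t-s)^{-\frac{\ell+1}{\alpha}}$ is integrable since $\ell<\alpha-1$), while on $[0,t/2]$ one loads the \emph{entire} $\Lambda^{\beta+\ell}\partial_x$ onto the kernel --- harmless there because $t-s\geq t/2$ --- and keeps the nonlinearity at the base level $\|\rho H\rho\|_{L^p}\lesssim s^{-2+\frac{1}{\alpha}(2+\frac1p)}$, which is integrable near $s=0$. Without this (or an equivalent restart of the mild formulation from a positive time $t_0$), the bootstrap fails. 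A secondary caution: you speak of distributing $\Lambda^{\theta+\ell}$ across the product, but $\Lambda^{\theta+\ell}\rho$ is precisely the unknown at that stage; only $\Lambda^\theta$ (the level already established) may be put on the nonlinearity, with the remaining $\Lambda^\ell\partial_x$ absorbed by the kernel.
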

\begin{proof}
Denote
\begin{align}\label{eq:rho12}
\rho(x,t)=\rho_1(x,t)-\rho_2(x,t):=G_\alpha(\cdot,t)\ast\rho_0-\int_0^t \partial_xG_\alpha(\cdot,t-s)\ast (\rho(s) H\rho(s))\di s.
\end{align}
The first term $\rho_1(x,t)=G_\alpha(\cdot,t)\ast \rho_0$ is the solution to the fractional heat equation with initial date $\rho_0$. Due to instantaneous regularization of the fractional heat equation, we have $\rho_1\in C^\infty(\mathbb{R}\times(0,\infty))$. From  \eqref{eq:fracdecay1} with $p=\frac{1}{\alpha-1}$, we obtain for $\frac{1}{\alpha-1}\leq q\leq \infty,$ $0< t\leq T$,
\begin{equation}\label{eq:Lq1}
\begin{aligned}
\|\rho_1(t)\|_{\dot{H}^{k\ell,q}}=\|\Lambda^{k\ell}G_\alpha(\cdot,t)\ast\rho_0\|_{L^q}\leq C_{\ell,\alpha}^kk^{1+\frac{k\ell}{\alpha}-\frac{1}{\alpha}(1+\frac{1}{q})} t^{-1-\frac{k\ell}{\alpha}+\frac{1}{\alpha}(1+\frac{1}{q})}.
\end{aligned}
\end{equation}
Next, we separate the proof into several steps.

\textbf{Step 1.} In this step, we are going to prove 
\begin{align}\label{eq:Lqestimate}
\|\rho(t)\|_{L^q}\leq C_{0,\alpha}t^{-1+\frac{1}{\alpha}(1+\frac{1}{q})},\quad  \frac{1}{\alpha-1}\leq q\leq \infty,~~0<t\leq T.
\end{align}
Because of \eqref{eq:Lq1} (for $\ell=0$), we only need to show that $\rho_2$ satisfies \eqref{eq:Lqestimate}.
From  \eqref{eq:fracdecay2} with $\ell=0$ and $\frac{1}{\alpha-1}=p\leq q< \infty$, we have
\begin{equation}\label{eq:Lq2}
\begin{aligned}
\|\rho_2(t)\|_{L^q}\leq &C\int_0^t (t-s)^{-\frac{1}{\alpha}(\alpha-\frac{1}{q})}\|\rho(s) H\rho(s)\|_{L^{\frac{1}{\alpha-1}}}\di s \\
\leq &C\int_0^t (t-s)^{-1+\frac{1}{\alpha q}}s^{-\frac{\alpha-1}{\alpha}}\di s\cdot\sup_{0<t\leq T}s^{\frac{\alpha-1}{\alpha}}\|\rho(s)\|^2_{L^{\frac{2}{\alpha-1}}} \\
=&Ct^{-1+\frac{1}{\alpha}(1+\frac{1}{q})}.
\end{aligned}
\end{equation}
Hence, \eqref{eq:Lqestimate} holds for $ \frac{1}{\alpha-1}\leq q<\infty$. 

From  \eqref{eq:fracdecay2} with $\ell=0$, $p=\frac{2}{\alpha-1}$ and $q=\infty,$ we have
\begin{equation*}
\begin{aligned}
\|\rho_2(t)\|_{L^\infty}\leq &C\int_0^t (t-s)^{-\frac{\alpha+1}{2\alpha}}\|\rho(s) H\rho(s)\|_{L^{\frac{2}{\alpha-1}}}\di s \\
\leq &C\int_0^t (t-s)^{-\frac{\alpha+1}{2\alpha}}s^{-\frac{3(\alpha-1)}{2\alpha}}\di s\cdot \sup_{0<t\leq T}s^{\frac{3(\alpha-1)}{2\alpha}}\|\rho(s)\|^2_{L^{\frac{4}{\alpha-1}}}.
\end{aligned}
\end{equation*}
Because $-1<-\frac{\alpha+1}{2\alpha}<0$ and $-1<-\frac{3(\alpha-1)}{2\alpha}<0$, from \eqref{eq:Lq2} we obtain
\begin{equation}\label{eq:Lq3}
\begin{aligned}
\|\rho_2(t)\|_{L^\infty}\leq Ct^{-1+\frac{1}{\alpha}},\quad 0<t\leq T.
\end{aligned}
\end{equation}
Combining \eqref{eq:Lq1}, \eqref{eq:Lq2} and \eqref{eq:Lq3} gives \eqref{eq:Lqestimate}.

\textbf{Step 2.}
In this step, we are going to prove $\rho(t)\in H^{\ell,q}(\mathbb{R})$ for any $0<\ell<\alpha-1$, $\frac{1}{\alpha-1}\leq q\leq \infty$, and
\begin{align}\label{eq:step2}
\|\rho(t)\|_{\dot{H}^{\ell,q}} \leq Ct^{-1-\frac{\ell}{\alpha}+\frac{1}{\alpha}(1+\frac{1}{q})},\quad \frac{1}{\alpha-1}\leq q\leq \infty, \quad 0< t\leq T.
\end{align}
Because of \eqref{eq:Lq1}, we only need to show \eqref{eq:step2} for $\rho_2$.

From \eqref{eq:fracdecay2} with $p=q$, we have
\begin{equation*}
\begin{aligned}
\|\rho_2(t)\|_{\dot{H}^{\ell,q}} =& \|\Lambda^\ell\rho_2(t)\|_{L^q}\leq \int_0^t \|\Lambda^\ell\partial_xG_\alpha(\cdot,t-s)\ast (\rho(s) H\rho(s))\|_{L^q}\di s\\
\leq &C\int_0^t  (t-s)^{-\frac{\ell+1}{\alpha}}\|\rho(s) H\rho(s)\|_{L^q} \di s\\
\leq &C\int_0^t  (t-s)^{-\frac{\ell+1}{\alpha}}s^{-2+\frac{1}{\alpha}(2+\frac{1}{q})} \di s\cdot \sup_{0<s\leq T}s^{2-\frac{1}{\alpha}(2+\frac{1}{q})}\|\rho(s)\|_{L^{2q}}^2.
\end{aligned}
\end{equation*}
For any $\frac{1}{\alpha-1}\leq q<\infty$, we have $-2+\frac{1}{\alpha}(2+\frac{1}{q})>-1.$ Hence, for $0<\ell<\alpha-1$, we have
\begin{equation}\label{eq:rho2wq}
\begin{aligned}
\|\rho_2(t)\|_{\dot{H}^{\ell,q}} \leq C\int_0^t  (t-s)^{-\frac{\ell+1}{\alpha}}s^{-2+\frac{1}{\alpha}(2+\frac{1}{q})} \di s\leq Ct^{-1-\frac{\ell}{\alpha}+\frac{1}{\alpha}(1+\frac{1}{q})}.
\end{aligned}
\end{equation}
From \eqref{eq:fracdecay2} with $q=\infty$, we have
\begin{equation*}
\begin{aligned}
\|\rho_2(t)\|_{\dot{H}^{\ell,\infty}} =& \|\Lambda^\ell\rho_2(t)\|_{L^\infty}\leq \int_0^t \|\Lambda^\ell\partial_xG_\alpha(\cdot,t-s)\ast (\rho(s) H\rho(s))\|_{L^\infty}\di s\\
\leq &C\int_0^t  (t-s)^{-\frac{\ell+1}{\alpha}-\frac{1}{p\alpha}}\|\rho(s) H\rho(s)\|_{L^p} \di s\\
\leq &C\int_0^t  (t-s)^{-\frac{\ell+1}{\alpha}-\frac{1}{p\alpha}}s^{-2+\frac{1}{\alpha}(2+\frac{1}{p})} \di s\cdot \sup_{0<s\leq T}s^{2-\frac{1}{\alpha}(2+\frac{1}{p})}\|\rho(s)\|_{L^{2p}}^2.
\end{aligned}
\end{equation*}
Hence, for $0<\ell<\alpha-1$ and $p<\infty$ big enough, we have $-\frac{\ell+1}{\alpha}-\frac{1}{p\alpha}>-1$ and \eqref{eq:step2} holds for $q=\infty$.

\textbf{Step 3.} 
In this step, we are going to prove  that if $\rho(t)\in H^{\beta,q}(\mathbb{R})$ for any $\beta>0$ and $\frac{1}{\alpha-1}\leq q\leq \infty$ satisfying
\begin{align}\label{eq:step31}
\|\rho(t)\|_{\dot{H}^{\beta,q}} \leq Ct^{-1-\frac{\beta}{\alpha}+\frac{1}{\alpha}(1+\frac{1}{q})},
\end{align}
then we have $\rho(t)\in H^{\beta+\ell,q}(\mathbb{R})$ and
\begin{align}\label{eq:step3}
\|\rho(t)\|_{\dot{H}^{\beta+\ell,q}} \leq Ct^{-1-\frac{\beta+\ell}{\alpha}+\frac{1}{\alpha}(1+\frac{1}{q})},\quad \frac{1}{\alpha-1}\leq q\leq \infty, \quad 0< t\leq T.
\end{align}

Since \eqref{eq:Lq1}, we only need to show that $\rho_2(x,t)$ satisfies \eqref{eq:step3}. 
Notice that $\Lambda^\beta(H\rho(s))=H(\Lambda^\beta\rho(s))$ and hence for $\frac{1}{\alpha-1}\leq q<\infty,$
\[
\|\Lambda^\beta(H\rho(s))\|_{L^{q}}=\|H(\Lambda^\beta\rho(s))\|_{L^{q}}\leq \|\Lambda^\beta\rho(s)\|_{L^q}<\infty.
\]
This implies $H\rho\in H^{\beta,q}(\mathbb{R})$ for $\frac{1}{\alpha-1}\leq q<\infty$.  By the Sobolev embedding for $q$ big enough, we know $H\rho(s)\in L^\infty(\mathbb{R})$. Therefore,
\[
\rho(s),~H\rho(s)\in H^{\beta,q}(\mathbb{R})\cap L^\infty(\mathbb{R}).
\]
From fractional Leibniz inequality (see, for instance, \cite{kato1988commutator}), we have
\begin{equation}\label{eq:KatoPonce}
\begin{aligned}
\|\Lambda^\beta(\rho(s)H\rho(s))\|_{L^q}\leq C\Big(\|\Lambda^\beta\rho(s)\|_{L^{q_1}}\|H\rho_2(s)\|_{L^{q_2}}+\|\Lambda^\beta H\rho_2(s)\|_{L^{q_3}}\|\rho_2(s)\|_{L^{q_4}}\Big)
\end{aligned}
\end{equation}
for 
\[
\frac{1}{q}=\frac{1}{q_1}+\frac{1}{q_2}=\frac{1}{q_3}+\frac{1}{q_4},\quad q<q_i,\quad i=1,\cdots,4.
\]
Combining \eqref{eq:Lqestimate}, \eqref{eq:step31} and \eqref{eq:KatoPonce} gives
\begin{equation}\label{eq:productdecay}
\|\rho(s)H\rho(s)\|_{\dot{H}^{\beta,q}}=\|\Lambda^\beta(\rho(s)H\rho(s))\|_{L^q}\leq Ct^{-2-\frac{\beta}{\alpha}+\frac{1}{\alpha}(2+\frac{1}{q})}.
\end{equation}
From \eqref{eq:fracdecay1},  \eqref{eq:fracdecay2} and \eqref{eq:productdecay}, the following holds for $\frac{1}{\alpha-1}\leq q\leq \infty$
\begin{equation*}
\begin{aligned}
& \|\Lambda^{\beta+\ell}\rho_2(t)\|_{L^q}\\
\leq& \int_{\delta}^t \|\Lambda^\ell\partial_xG_\alpha(\cdot,t-s)\ast [\Lambda^\beta(\rho(s) H\rho(s))]\|_{L^q}\di s + \int_0^{\delta} \|\Lambda^{\beta+\ell}\partial_xG_\alpha(\cdot,t-s)\ast (\rho(s) H\rho(s))\|_{L^q}\di s\\
\leq &C\int_{\delta}^t  (t-s)^{-\frac{\ell+1}{\alpha}+\frac{1}{\alpha}(\frac{1}{q}-\frac{1}{p})}\|\Lambda^\beta(\rho(s) H\rho(s))\|_{L^p} \di s +C\int^{\delta}_0  (t-s)^{-\frac{\beta+\ell+1}{\alpha}+\frac{1}{\alpha}(\frac{1}{q}-\frac{1}{p})}\|\rho(s) H\rho(s)\|_{L^p} \di s\\
\leq &C\int_{\delta}^t  (t-s)^{-\frac{\ell+1}{\alpha}+\frac{1}{\alpha}(\frac{1}{q}-\frac{1}{p})}s^{-2-\frac{\beta}{\alpha}+\frac{1}{\alpha}(2+\frac{1}{p})} \di s +C\int_0^{\delta}  (t-s)^{-\frac{\beta+\ell+1}{\alpha}+\frac{1}{\alpha}(\frac{1}{q}-\frac{1}{p})}s^{-2+\frac{1}{\alpha}(2+\frac{1}{p})} \di s\\
\leq & C(t-\delta)^{1-\frac{\ell+1}{\alpha}+\frac{1}{\alpha}(\frac{1}{q}-\frac{1}{p})}\delta^{-2-\frac{\beta}{\alpha}+\frac{1}{\alpha}(2+\frac{1}{p})}+ C(t-\delta)^{-\frac{\beta+\ell+1}{\alpha}+\frac{1}{\alpha}(\frac{1}{q}-\frac{1}{p})}\delta^{-1+\frac{1}{\alpha}(2+\frac{1}{p})}.
\end{aligned}
\end{equation*}
Choose $\delta=\frac{t}{2}$ and we obtain \eqref{eq:step3} for $\rho_2(x,t)$.

\textbf{Step 4.} Notice that the Bessel potential space $H^{\theta,\infty}(\mathbb{R})$ is not the same as Sobolev space $W^{\theta,\infty}$. Since $H^{\theta,q}(\mathbb{R})=W^{\theta,q}(\mathbb{R})$ for any $\theta>0$ and $1<q<\infty$, from Step 3 we see that the estimate for $W^{\theta,\infty}(\mathbb{R})$ is the same as the estimate of $H^{\theta,\infty}(\mathbb{R})$. So we omit the proof of \eqref{eq:decaypower}.

\textbf{Step 5.} In this step, we are going to show the regularity of time for the mild solution $\rho$. First, let us prove $\rho\in C((0,T]; H^{\theta,q}(\mathbb{R}))$ for any $\theta>0$ and $q\geq\frac{1}{\alpha-1}$. For any $t>s>0$, we have
\[
\rho(t)=G_\alpha(\cdot,t-s)\ast\rho(s)-\int_s^t\partial_xG_\alpha(\cdot,t-\tau)\ast(\rho(\tau)H\rho(\tau))\di\tau.
\]
Therefore,
\begin{multline}\label{eq:continu}
\|\rho(t)-\rho(s)\|_{H^{\theta,q}}\leq \|G_\alpha(\cdot,t-s)\ast\rho(s)-\rho(s)\|_{H^{\theta,q}}\\
+\int_s^t\|G_\alpha(\cdot,t-\tau)\ast \partial_x (\rho(\tau)H\rho(\tau))\|_{H^{\theta,q}}\di\tau.
\end{multline}
The first term in the right hand side of \eqref{eq:continu} goes to zero as $|s-t|\to0$, because the solution of fractional heat equation is continuous at the initial data $\rho(s)$ in $H^{\theta,q}(\mathbb{R})$. For the second term in \eqref{eq:continu}, due to \eqref{eq:fracdecay1} for $\ell=0$, we have the following estimate:
\[
\|G_\alpha(\cdot,t-\tau)\ast\partial_x (\rho(\tau)H\rho(\tau))\|_{H^{\theta,q}}\leq C\|\rho(\tau)H\rho(\tau)\|_{H^{\theta+1,q}}.
\]
From Step 3, we know that $\|\rho(\tau)\|_{H^{\theta+1,q}}$ is uniformly bounded for $\tau\in(s, t)$. Therefore, the second term in the right hand side of \eqref{eq:continu} also goes to zero as $|t-s|\to0$.

Next, we improve the time regularity. Choose an arbitrary $t_0\in(0,T)$ and set the new initial date
\[
\tilde{\rho}_0:=\rho(t_0).
\]
With this new initial date, we have a mild solution
\[
\tilde{\rho}(t)=\rho(t+t_0),\quad t\in[0,T-t_0],
\]
which satisfies
\begin{equation}\label{eq:barrho12}
\begin{aligned}
\tilde{\rho}(x,t)=G_\alpha(\cdot,t)\ast\tilde{\rho}_0-\int_0^t  \partial_xG_\alpha(\cdot,t-s)\ast\left(\tilde{\rho}(s)H\tilde{\rho}(s)\right)\di s.
\end{aligned}
\end{equation}
Notice that for $f\in  L^p(\mathbb{R})$, because $G_\alpha(\cdot,t)\ast f$ is the solution of fractional heat equation  $u_t=-\nu\Lambda^\alpha u$ with initial data $f$, the following holds:
\begin{align}\label{eq:heatproperti}
G_\alpha(\cdot,t)\ast f-f=-\int_0^t\nu\Lambda^\alpha G_\alpha(\cdot,s)\ast f\di s.
\end{align}
Combining \eqref{eq:barrho12} and \eqref{eq:heatproperti} yields
\begin{align*}
&\int_0^t\left[-\nu\Lambda^\alpha \tilde{\rho}(\tau)-\partial_x(\tilde{\rho}(\tau)H\tilde{\rho}(\tau))\right]\di \tau\\
=&\int_0^t-\nu\Lambda^\alpha G_\alpha(\cdot,\tau)\ast\tilde{\rho}_0\di \tau -\int_0^t\int_0^\tau -\nu\Lambda^\alpha[\partial_xG_\alpha(\cdot, \tau-s)\ast(\tilde{\rho}(s)H\tilde{\rho}(s))]\di s\di \tau\\
&\qquad -\int_0^t\partial_x(\tilde{\rho}(\tau)H\tilde{\rho}(\tau))\di \tau\\
=&(G_\alpha(\cdot,t)\ast\tilde{\rho}_0-\tilde{\rho}_0)-\int_0^t\int_s^t-\nu\Lambda^\alpha [\partial_xG_\alpha(\cdot,\tau-s)\ast(\tilde{\rho}(s)H\tilde{\rho}(s))]\di \tau\di s\\
&\qquad -\int_0^t\partial_x(\tilde{\rho}(\tau)H\rho(\tau))\di \tau\\
=&G_\alpha(\cdot,t)\ast\tilde{\rho}_0-\int_0^t\partial_x G_\alpha(\cdot,t-s)\ast(\tilde{\rho}(s)H\rho(s))\di s-\tilde{\rho}_0,
\end{align*}
which implies
\begin{align}\label{eq:keyidentity}
\int_0^t[-\nu\Lambda^\alpha\tilde{\rho}(\tau)-\partial_x(\tilde{\rho}(\tau)H\tilde{\rho}(\tau))]\di \tau=\tilde{\rho}(t)-\tilde{\rho}_0.
\end{align}
Because $\partial_x(\tilde{\rho}H\tilde{\rho})\in C([0,T-t_0]; H^{\theta,q}(\mathbb{R})),$ $\forall \theta>0,~~q\geq\frac{1}{\alpha-1}$ we have
\begin{align}
\tilde{\rho}\in C^\infty([0,T-t_0]; H^{\theta,q}(\mathbb{R})), \quad \forall \theta>0.
\end{align}
Since $t_0$ is chosen arbitrarily, the time regularity is obtained.

\textbf{Step 6}. In this step, we are going to show  the nonnegativity of solutions. 

Consider a sequence of smooth positive functions $\rho_{0n}$ such that $\rho_{0n}\to \rho_0$ in $L^{\frac{1}{\alpha-1}}(\mathbb{R})$ as $n\to\infty.$ By \eqref{eq:positive} for $p=\frac{1}{\alpha-1}$ and $q=\frac{2}{\alpha-1}$, we have
\[
\|G_\alpha(\cdot,t)\ast \rho_{0n}-G_\alpha(\cdot,t)\ast \rho_{0}\|_{X_T}\to 0,\quad n\to\infty.
\]
Recall Theorem \ref{thm:contraction}. For $a$ small enough, there exists (a uniform) $T>0$ such that \eqref{eq:important} holds for all $n\in\mathbb{N}$, i.e.,
\[
\sup_{0<t\leq T}t^{\frac{\alpha-1}{2\alpha}}\|G_\alpha(\cdot,t)\ast \rho_{0n}\|_{L^{\frac{2}{\alpha-1}}}\leq a.
\]
Hence, Theorem \ref{thm:contraction} gives a sequence of solutions $\rho_n \in X_T^a$ with initial data $\rho_{0n}$ in a uniform time interval $[0,T]$. Due to \cite[Lemma 2.7]{li2010exploding}, we also have $\rho_n\geq0$. 
Similarly to \eqref{eq:cont}, we obtain
\begin{equation}
\begin{aligned}
\|\rho_n-\rho\|_{X_T}\leq \|G_\alpha(\cdot,t)\ast \rho_{0n}-G_\alpha(\cdot,t)\ast \rho_{0}\|_{X_T}+Ca\|\rho_n-\rho\|_{X_T},
\end{aligned}
\end{equation}
which implies 
\begin{equation}
\begin{aligned}
\|\rho_n-\rho\|_{X_T}\leq \frac{1}{1-Ca}\|G_\alpha(\cdot,t)\ast \rho_{0n}-G_\alpha(\cdot,t)\ast \rho_{0}\|_{X_T}\to 0,\quad n\to\infty.
\end{aligned}
\end{equation}
Because $\rho_{n},~\rho\in C([0,T];L^{\frac{1}{\alpha-1}}(\mathbb{R}))$, we have $\|\rho_{n}(t)-\rho(t)\|_{L^{\frac{1}{\alpha-1}}}\to0$ as $n\to\infty$ for any $0<t<T$. Since $\rho_{n}(t)\geq0$, we obtain $\rho(t)\geq0$  for a.e. $x\in\mathbb{R}$. Since $\rho(t)\in C^\infty(\mathbb{R})$ for $t>0$, we obtain $\rho(x,t)\geq0$ for all $x\in\mathbb{R}$.

This is the end of the proof.
\end{proof}

To prove the spatial analyticity of mild solutions, we need to obtain more explicit estimate for the constant in \eqref{eq:decaypower}. We are going to generalize the method used in \cite{giga2002regularizing,sawada2005analyticity} for the cases of fractional diffusion. First, let us introduce a useful lemma about an estimate for multiplication of sequences, which was proved by C. Kahane \cite[Lemma 2.1]{kahane1969spatial}. The original lemma is for multi-index, and here we only need to use the following one dimensional version for integers.
\begin{lemma}\label{lmm:sequesmulti}
Let $\delta>\frac{1}{2}$. Then there exists a positive constant $\lambda$ depending only on $\delta$ such that
\[
\sum_{0\leq j\leq k}\binom{k}{j}j^{j-\delta}(k-j)^{k-j-\delta}\leq\lambda k^{k-\delta},~~\forall k\in\mathbb{N}.
\]
Here, we use $0^p=1$ for any $p\in\mathbb{R}.$
\end{lemma}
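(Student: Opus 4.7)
My plan is to isolate the boundary indices $j\in\{0,k\}$ and apply Stirling's formula to the interior sum, then reduce the result to a convergent $p$-series via the hypothesis $\delta>1/2$.

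\textbf{Boundary and Stirling reduction.} By the convention $0^{p}=1$, the $j=0$ and $j=k$ summands each equal $k^{k-\delta}$, contributing $2k^{k-\delta}$ to the total; the small cases $k\in\{0,1\}$ then follow directly. For $1\leq j\leq k-1$, Stirling's formula $n!=\sqrt{2\pi n}\,(n/e)^{n}(1+O(1/n))$ rewrites as $n^{n-\delta}=c_{n}\,n!\,e^{n}\,n^{-\delta-1/2}$, where $c_{n}=n^{n+1/2}/(n!\,e^{n})$ is a positive sequence independent of $\delta$ satisfying $c_{n}\to 1/\sqrt{2\pi}$; in particular $c_{n}$ is bounded above and below by positive constants uniformly in $n\geq 1$. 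Applying this identity to $n=j,\,k-j,\,k$, the factorials collapse against $\binom{k}{j}$ and the exponentials combine via $e^{j}\cdot e^{k-j}=e^{k}$ and cancel, leaving
\[
\frac{\binom{k}{j}\,j^{j-\delta}(k-j)^{k-j-\delta}}{k^{k-\delta}}=\frac{c_{j}c_{k-j}}{c_{k}}\cdot\frac{k^{\delta+1/2}}{j^{\delta+1/2}(k-j)^{\delta+1/2}}\leq C\,\frac{k^{\delta+1/2}}{j^{\delta+1/2}(k-j)^{\delta+1/2}}
\]
for an absolute constant $C$, since the sequence $c_{j}c_{k-j}/c_{k}$ is uniformly bounded.

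\textbf{Summing.} Split the interior sum at $j=\lfloor k/2\rfloor$. For $1\leq j\leq\lfloor k/2\rfloor$ one has $k-j\geq k/2$, hence the right-hand factor above is bounded by $2^{\delta+1/2}j^{-(\delta+1/2)}$. Since $\delta>1/2$ gives $\delta+1/2>1$, the tail series $\sum_{j\geq 1}j^{-(\delta+1/2)}=\zeta(\delta+1/2)$ converges. The range $\lfloor k/2\rfloor<j\leq k-1$ is handled symmetrically via the substitution $j\mapsto k-j$. Adding the two halves to the $2k^{k-\delta}$ boundary contribution yields the claimed bound with an explicit
\[
\lambda=2+2\,C\,2^{\delta+1/2}\,\zeta(\delta+1/2),
\]
depending only on $\delta$.

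The only genuinely delicate point is that the hypothesis $\delta>1/2$ enters in exactly one place---to ensure absolute convergence of the $p$-series majorant. For $\delta\leq 1/2$ the sum over $1\leq j\leq k-1$ grows with $k$, so the threshold $\delta>1/2$ in the statement is sharp. Everything else is bookkeeping, provided one is careful to state Stirling uniformly down to $n=1$, which is automatic since each $c_{n}$ is an explicit positive number.
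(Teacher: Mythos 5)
Your proof is correct: the boundary terms $j\in\{0,k\}$ are handled properly under the convention $0^{p}=1$, the identity $n^{n-\delta}=c_{n}\,n!\,e^{n}\,n^{-\delta-1/2}$ with $c_{n}=n^{n+1/2}/(n!\,e^{n})$ is a tautology whose content (uniform two-sided positive bounds on $c_{n}$ for $n\geq 1$) follows from Stirling, the cancellation of factorials and exponentials in the ratio is exact, and the split at $\lfloor k/2\rfloor$ correctly reduces the interior sum to $2^{\delta+1/2}\zeta(\delta+1/2)$ up to an absolute constant, which is where $\delta>\tfrac12$ is used. The comparison with the paper is somewhat one-sided, because the paper does not prove this lemma at all: it simply cites C.~Kahane's Lemma~2.1 (stated there for multi-indices) and records the one-dimensional specialization. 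Your argument therefore supplies a self-contained, elementary proof where the paper only offers a reference; it has the added benefit of making transparent both an explicit admissible constant $\lambda=2+2C\,2^{\delta+1/2}\zeta(\delta+1/2)$ and the sharpness of the threshold $\delta>\tfrac12$ (at $\delta=\tfrac12$ the interior sum contributes a factor growing like $\log k$). The trade-off is only length: the citation imports the more general multi-index statement for free, while your proof, though it would extend to multi-indices with the same splitting, is written for the one-dimensional case the paper actually needs.
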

We have the following more explicit estimate for \eqref{eq:decaypower}:
\begin{theorem}[Spatial Analyticity]\label{thm:spatialanaly}
Let $\rho(t)$ be a mild solution given in Theorem \ref{thm:regularity}. Then 
\begin{align}\label{eq:decaypower1}
\|\partial_x^n\rho(t)\|_{L^q(\mathbb{R})}\leq K^{n}n^{n}t^{-\frac{n}{\alpha}-1+\frac{1}{\alpha}(1+\frac{1}{q})},\quad\forall n\in\mathbb{N},~~0<t\leq T
\end{align}
for some constant $K$ independent of $n$ and $\frac{1}{\alpha-1}\leq q\leq \infty$. Consequently, $\rho(\cdot,t)$ is spatially analytic for $0<t\leq T.$
\end{theorem}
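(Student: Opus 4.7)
The plan is an induction on $n$, taking the base case $n=0$ from Theorem \ref{thm:regularity} (the $L^q$-decay estimate \eqref{eq:Lqestimate}, with the convention $0^0=1$). For the inductive step, I would work from the semigroup-shifted mild formulation
\begin{equation*}
\rho(t)=G_\alpha(\cdot,t/2)\ast\rho(t/2)-\int_{t/2}^{t} \partial_xG_\alpha(\cdot,t-s)\ast(\rho(s) H\rho(s))\di s,
\end{equation*}
obtained from the semigroup identity already used in Step 5 of the proof of Theorem \ref{thm:regularity}. Pushing the lower limit up to $t/2$ removes the singularity at $s=0$ and, more importantly, guarantees that every factor $\|\partial_x^{j}\rho(s)\|_{L^{q_i}}$ supplied by the induction hypothesis is evaluated at $s\sim t$, producing a clean power of $t$.

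After applying $\partial_x^n$, I would distribute the Fourier multipliers to write
\begin{equation*}
\partial_x^n\int_{t/2}^{t}\partial_xG_\alpha(\cdot,t-s)\ast(\rho(s) H\rho(s))\di s
=\int_{t/2}^{t}\partial_x^{n-j+1}G_\alpha(\cdot,t-s)\ast\partial_x^{j}\bigl(\rho(s) H\rho(s)\bigr)\di s
\end{equation*}
for a choice $j\in\{0,1,\ldots,n\}$, and expand $\partial_x^{j}(\rho H\rho)$ by the Leibniz rule. The induction hypothesis applies only when every derivative appearing in the Leibniz sum has order at most $n-1$, which forces $j\le n-1$; conversely, smaller $j$ loads more derivatives onto the kernel $\partial_x^{n-j+1}G_\alpha$ and makes the factor $(t-s)^{-(n-j+2)/\alpha+1/(\alpha r)}$ more singular. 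The crucial deviation from the naive Giga--Sawada bookkeeping (which gave $n^{2n/\alpha}$) is to take $j=n-1$, so the kernel carries only two derivatives and Leibniz produces
\begin{equation*}
\partial_x^{n-1}(\rho H\rho)=\sum_{k=0}^{n-1}\binom{n-1}{k}\partial_x^k\rho\,\cdot\,H\partial_x^{n-1-k}\rho,
\end{equation*}
whose every factor is of order strictly less than $n$.

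Inserting the induction hypothesis into each Leibniz term with H\"older indices $q_1,q_2\in(1,\infty)$ (so the $L^p$-boundedness of $H$ is available) and bounding $\partial_x^2 G_\alpha$ in $L^r$ via \eqref{eq:fracdecay0}, the combinatorial factor that emerges is
\begin{equation*}
\sum_{k=0}^{n-1}\binom{n-1}{k}k^{k}(n-1-k)^{n-1-k}.
\end{equation*}
Writing $k^{k}=k^{k-\delta}\cdot k^{\delta}$ for any $\delta>1/2$ and using $k^{\delta}(n-1-k)^{\delta}\le(n-1)^{2\delta}$, Kahane's Lemma \ref{lmm:sequesmulti} bounds this sum by $\lambda(n-1)^{n-1+\delta}\le\lambda n^{n}$. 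The linear piece $\partial_x^{n}G_\alpha(\cdot,t/2)\ast\rho(t/2)$ is handled separately by splitting $t/2=\sum_{i=1}^{n}t/(2n)$, placing one derivative on each of the $n$ semigroup factors, and applying Young's inequality together with \eqref{eq:fracdecay00}; this yields a factor of $n^{n/\alpha}\le n^{n}$ (since $\alpha\ge 1$). Choosing $K$ large enough to absorb $C_{0,\alpha}$, $C_{1,\alpha}$, the Hilbert transform norm, and the Beta-function constants then closes the induction.

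The hard part is the exponent bookkeeping. One must choose $p$, $q_1$, $q_2$, $r$ so that (i) $q_1,q_2$ remain inside $(1,\infty)$ for Hilbert transform boundedness, (ii) the time integral $\int_{t/2}^{t}(t-s)^{-(n-j+2)/\alpha+1/(\alpha r)}s^{-(n-1)/\alpha-2+(2+1/p)/\alpha}\di s$ converges at the upper endpoint for \emph{every} $n$ simultaneously (this is where the restriction $1<\alpha\le 2$ enters critically), and (iii) the resulting $t$-exponent matches the target $t^{-n/\alpha-1+(1+1/q)/\alpha}$ including the endpoint $q=\infty$; this last point likely requires a separate interpolation step, or Sobolev embedding from a large finite $q$ using the already-established $W^{\theta,q}$ bounds from Theorem \ref{thm:regularity}. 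Once \eqref{eq:decaypower1} is proved, spatial analyticity follows immediately from Stirling: $n^n/n!\sim e^{n}/\sqrt{2\pi n}$, so the complex Taylor series of $\rho(\cdot,t)$ about any $x_0\in\mathbb{R}$ converges on a disc of radius comparable to $t^{1/\alpha}/(eK)$.
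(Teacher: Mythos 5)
There is a genuine gap, and it sits exactly at the point you flagged as ``the hard part.'' Your plan fixes $j=n-1$, so the kernel in the Duhamel term is $\partial_x^{2}G_\alpha(\cdot,t-s)$. By the scaling \eqref{eq:kell2}, $\|\partial_x^{2}G_\alpha(\cdot,\tau)\|_{L^r}\sim \tau^{-3/\alpha+1/(\alpha r)}$, and after Young's inequality ($1+1/q=1/r+1/p$) the most favorable choice $r=1$, $p=q$ still leaves a factor $(t-s)^{-2/\alpha}$. Since $2/\alpha\geq 1$ for every $\alpha\in(1,2]$, the integral $\int_{t/2}^{t}(t-s)^{-2/\alpha}\,\di s$ diverges at the upper endpoint for the \emph{entire} parameter range of the theorem; the restriction $1<\alpha\leq 2$ does not rescue this step, it is precisely what kills it (you would need $\alpha>2$). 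The only integrable option is to leave a single derivative on the kernel, i.e.\ $j=n$, but then the Leibniz expansion of $\partial_x^{n}(\rho H\rho)$ contains the terms $\partial_x^{n}\rho\cdot H\rho$ and $\rho\cdot H\partial_x^{n}\rho$, which involve the quantity being estimated and cannot be fed the induction hypothesis. The paper resolves this with an absorption argument: it isolates these two terms over the short interval $[\tfrac{n-1}{n}t,t]$ (the term $A_5$ in \eqref{eq:main}), bounds them by $\tfrac12\sup_{s\le t}s^{\mu}\|\partial_x^{n}\rho(s)\|_{L^q}$, and absorbs them into the left-hand side. Your proposal has no counterpart to this step.

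A second, independent obstruction is the truncation to $[t/2,t]$. Even with $j=n$ and the extreme Leibniz terms handled, evaluating the induction hypothesis at $s\geq t/2$ costs $s^{-n/\alpha-b}\leq 2^{n/\alpha+b}t^{-n/\alpha-b}$, and the exponential factor $(2^{1/\alpha})^{n}$ cannot be absorbed: closing the induction would require $C\lambda(2^{1/\alpha})^{n}\leq K^{\delta}$ for all $n$, which fails for any fixed $K$. This is why the paper does not use a single cut at $t/2$ but partitions $[0,t]$ into $n$ subintervals $[\tfrac{m-1}{n}t,\tfrac{m}{n}t]$ and, on the $m$-th piece, places exactly $m$ derivatives on the nonlinearity and $n-m$ on the kernel (the term $A_2$). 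With this matched splitting the loss $(\tfrac{n}{m-1})^{m/\alpha+b}$ from the lower endpoint is compensated by the factor $m^{m-\delta}$ from Kahane's lemma, and the sum over $m$ is controlled by the claim \eqref{eq:claim1} proved in Appendix~\ref{app:proofclaim1}. A smaller bookkeeping issue: the paper's induction hypothesis carries $m^{m-\delta}$ with $\delta>1/2$ precisely so that Lemma~\ref{lmm:sequesmulti} applies directly; your workaround of writing $k^{k}=k^{k-\delta}k^{\delta}$ and bounding $k^{\delta}(n-1-k)^{\delta}\leq(n-1)^{2\delta}$ outputs $\lambda(n-1)^{n-1+\delta}$, which does not reproduce the hypothesis $K^{n}n^{n}$ at the next stage and so does not close an induction stated with exponent $n^{n}$. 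Your treatment of the linear term and the final Stirling argument for analyticity are fine and agree with the paper.
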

\begin{proof}
Let $n \in\mathbb{N}$. Notice that we only need to prove \eqref{eq:decaypower1} for $n$ big enough. We use induction to prove this.
Assume that there exist constants $K$ (to be fixed) and $\delta>\frac{1}{2}$ such that
\begin{align}\label{eq:decaypower2}
\|\partial_x^m\rho(t)\|_{L^q}\leq K^{m-\delta}m^{m-\delta}t^{-\frac{m}{\alpha}-1+\frac{1}{\alpha}(1+\frac{1}{q})},\quad ~~0<t\leq T
\end{align}
holds for any $\frac{1}{\alpha-1}<q<\infty$ and $m<n$. Then, we prove that \eqref{eq:decaypower2} also holds for $m=n$. The cases for $q=\infty$ and $q=\frac{1}{\alpha-1}$ follow easily after we obtain the results for $\frac{1}{\alpha-1}<q<\infty$.

Due to the regularity results Theorem \ref{thm:regularity}, we only need to show \eqref{eq:decaypower2} for $n$ large enough. We have
\begin{equation}\label{eq:main}
\begin{aligned}
&\|\partial_x^n\rho(t)\|_{L^q}\leq \|\partial_x^n\rho_1(t)\|_{L^q}+\int_0^{t}\|\partial_x^{n}\partial_xG_\alpha(\cdot,t-s)\ast(\rho(s)H\rho(s))\|_{L^q}\di s\\
\leq& \|\partial_x^n\rho_1(t)\|_{L^q}+\sum_{m=2}^{n-1}\sum_{j=1}^m\binom{m}{j} \int_{\frac{m-1}{n}t}^{\frac{m}{n}t}\|\partial_x^{n-m}\partial_xG_\alpha(\cdot,t-s)\ast (\partial_x^j\rho(s)\partial_x^{m-j}H\rho(s))\|_{L^q}\di s\\
&\qquad \qquad +\int_{0}^{\frac{1}{n}t}\|\partial_x^{n}\partial_xG_\alpha(\cdot,t-s)\ast ( \rho(s) H\rho(s))\|_{L^q}\di s\\
&\qquad \qquad +  \int_{\frac{n-1}{n}t}^{t}\sum_{1<j<n}\binom{n}{j} \|\partial_xG_\alpha(\cdot,t-s)\ast (\partial_x^j\rho(s)\partial_x^{n-j}H\rho(s))\|_{L^q}\di s\\
&\qquad \qquad + \int_{\frac{n-1}{n}t}^{t} \|\partial_xG_\alpha(\cdot,t-s)\ast [ \rho(s)\partial_x^{n}H\rho(s)+\partial_x^n\rho(s) H\rho(s)]\|_{L^q}\di s\\
=&:A_1+A_2+A_3+A_4+A_5.
\end{aligned}
\end{equation}
Let $\mu:=\frac{n}{\alpha}+1-\frac{1}{\alpha}(1+\frac{1}{q})$.
Next, we are going to estimate $A_i$ respectively, $1\leq i\leq 5$. From \eqref{eq:Lq1}, choose $K$ big enough and we have
\begin{equation*}
A_1=\|\partial_x^n\rho_1(t)\|_{L^q}\leq C_{0,\alpha}^nn^{\mu} t^{-\mu}\leq t^{-\mu}K^{n-2\delta}n^{n-\delta}.
\end{equation*}
For $A_2$, we use \eqref{eq:fracdecay2} for $\frac{1}{\alpha-1}\leq p\leq q$ to obtain
\begin{equation*}
\begin{aligned}
A_2\leq \sum_{m=2}^{n-1}C_{1,\alpha}^{n-m}(n-m)^{\frac{n-m}{\alpha}+a} \int_{\frac{m-1}{n}t}^{\frac{m}{n}t}(t-s)^{-\frac{n-m}{\alpha}-a}\sum_{j=1}^m\binom{m}{j}\|\partial_x^j\rho(s)\partial_x^{m-j}H\rho(s)\|_{L^p}\di s,
\end{aligned}
\end{equation*}
where we choose $p$ close enough to $q$ such that $a=:\frac{1}{\alpha}(1+\frac{1}{p}-\frac{1}{q})<1$. Combining H\"older's inequality, \eqref{eq:decaypower2} and Lemma \ref{lmm:sequesmulti} gives 
\[
\sum_{j=1}^m\binom{m}{j}\|\partial_x^j\rho(s)\partial_x^{m-j}H\rho(s)\|_{L^p}\leq K^{m-2\delta}m^{m-\delta}s^{-\frac{m}{\alpha}-b}
\]
for $0<b:=2-\frac{1}{\alpha}(2+\frac{1}{p})<1$. Therefore, choose $K$ big enough and we have
\begin{equation*}
\begin{aligned}
A_2\leq& \sum_{m=2}^{n-1}C_{1,\alpha}^{n-m}(n-m)^{\frac{n-m}{\alpha}+a}K^{m-2\delta}m^{m-\delta} \int_{\frac{m-1}{n}t}^{\frac{m}{n}t}(t-s)^{-\frac{n-m}{\alpha}-a}s^{-\frac{m}{\alpha}-b}\di s\\
\leq&t^{-\mu} K^{n-2\delta}\sum_{m=2}^{n-1} (n-m)^{\frac{n-m}{\alpha}+a}m^{m-\delta} \int_{\frac{m-1}{n}}^{\frac{m}{n}}(1-\tau)^{-\frac{n-m}{\alpha}-a}\tau^{-\frac{m}{\alpha}-b}\di \tau\\
\leq&t^{-\mu} K^{n-2\delta}\sum_{m=2}^{n-1}\frac{1}{n} (n-m)^{\frac{n-m}{\alpha}+a}m^{m-\delta} \left(\frac{n}{n-m}\right)^{\frac{n-m}{\alpha}+a}\left(\frac{n}{m-1}\right)^{\frac{m}{\alpha}+b}\\
=&t^{-\mu} K^{n-2\delta}\sum_{m=2}^{n-1} n^{\mu} m^{m-\delta}  \left(\frac{1}{m-1}\right)^{\frac{m}{\alpha}+b}.
\end{aligned}
\end{equation*}
We claim that for $n$ big enough we have
\begin{align}\label{eq:claim1}
\sum_{m=2}^{n-1} n^{\mu} m^{m-\delta}  \left(\frac{1}{m-1}\right)^{\frac{m}{\alpha}+b}\leq  n^{n-\delta}.
\end{align}
See the proof of \eqref{eq:claim1} in Appendix \ref{app:proofclaim1}. Hence
\begin{equation*}
A_2\leq t^{-\mu} K^{n-2\delta}n^{n-\delta}.
\end{equation*}
For $A_3$ and $A_4$ in \eqref{eq:main}, choose $K$ big enough and we have
\begin{equation*}
\begin{aligned}
A_3=&\int_{0}^{\frac{1}{n}t}\|\partial_x^{n}\partial_xG_\alpha(\cdot,t-s)\ast ( \rho(s) H\rho(s))\|_{L^q}\di s\leq t^{-\mu} C_{1,\alpha}^n n^{\frac{n}{\alpha}+a}\int_0^{\frac{1}{n}}(1-\tau)^{-\frac{n}{\alpha}-a}\tau^{-b}\di\tau\\
\leq& t^{-\mu} C_{1,\alpha}^n n^{\frac{n}{\alpha}+a}\int_0^{\frac{1}{n}} \frac{1}{1-b} [(1-\tau)^{-\frac{n}{\alpha}-a}\tau^{1-b}] ' \di \tau\\
\leq&t^{-\mu} C_{1,\alpha}^n n^{\frac{n}{\alpha}+a} \frac{1}{1-b}\left(1+\frac{1}{n-1}\right)^{\frac{n}{\alpha}+a}n^{b-1} \leq t^{-\mu} K^{n-2\delta} n^{n-\delta},
\end{aligned}
\end{equation*}
and
\begin{align*}
A_4\leq& C_{\alpha}t^{-\mu} K^{n-2\delta}n^{n-\delta} \int_{\frac{n-1}{n}}^{1} (1-\tau)^{-a}\tau^{-\frac{n}{\alpha}-b} \di \tau\\
\leq& \frac{1}{1-a}C_{\alpha}t^{-\mu} K^{n-2\delta}n^{n-\delta}\left(1+\frac{1}{n-1}\right)^{\frac{n}{\alpha}+b}\left(\frac{1}{n}\right)^{1-a}\leq t^{-\mu} K^{n-2\delta}n^{n-\delta}.
\end{align*}	
Denote 
\[
B_n(t):=\sup_{0<s\leq t}s^\mu\|\partial_x^n\rho(s)\|_{L^q},
\]
and we have the following estimate for $A_5$
\begin{equation}\label{eq:A5}
\begin{aligned}
A_5=&\int_{\frac{n-1}{n}t}^{t} \|\partial_xG_\alpha(\cdot,t-s)\ast [\partial_x^{n}\rho(s) H\rho(s)+ \rho(s)\partial_x^{n}H\rho(s)]\|_{L^q}\di s\\
\leq& C_{\alpha}\int_{\frac{n-1}{n}t}^{t} (t-s)^{-a}\|\partial_x^{n}\rho(s) H\rho(s)+ \rho(s)\partial_x^{n}H\rho(s)\|_{L^p}\di s\\
\leq &2C_{\alpha}\int_{\frac{n-1}{n}t}^{t} (t-s)^{-a}s^{-1+a}\|\partial_x^{n}\rho(s)\|_{L^q}\di s\leq 2 C_{\alpha}t^{-\mu}\int_{\frac{n-1}{n}}^{1}  (1-\tau)^{-a}\tau^{-\frac{n}{\alpha}-b} \di \tau\cdot  B_n(t)\\
\leq&2 C_{\alpha}t^{-\mu} \left(1+\frac{1}{n-1}\right)^{\frac{n}{\alpha}+b}\left(\frac{1}{n}\right)^{1-a}B_n(t)\leq \frac{1}{2}t^{-\mu}B_n(t).
\end{aligned}
\end{equation}
Combining the above estimates yields
\begin{align*}
t^\mu\|\partial_x^n\rho(t)\|_{L^q}\leq t^\mu(A_1+A_2+A_3+A_4+A_5)\leq 4K^{n-2\delta}n^{n-\delta}+\frac{1}{2}B_n(t),
\end{align*}
which gives
\[
B_n(t)\leq 8K^{n-2\delta}n^{n-\delta}\leq K^{n-\delta}n^{n-\delta}.
\]

With the results for $\frac{1}{\alpha-1}<q<\infty$, we can do the above estimates again for the cases $q=\infty$ and $q=\frac{1}{\alpha-1}$. We only need to change a little about the estimate \eqref{eq:A5} and obtain \eqref{eq:decaypower2} for the cases $q=\infty$ and $q=\frac{1}{\alpha-1}$. 

\end{proof}

\subsection{Maximum principle in $L^{p}(\mathbb{R}) ~~(p\geq 1)$ and global extension}\label{sec:global}
In this subsection, we are going to finish the proof of Theorem \ref{thm:subcritical} by extending the solutions in Theorem \ref{thm:spatialanaly} globally. We have the following maximum principle results:
\begin{lemma}\label{lmm:MaxP}
Let $\rho$ be a nonnegative strong solution to \eqref{eq:Hilbert}. For $p\geq1$, we have
\begin{align}\label{eq:maxmum}
\|\rho(t)\|_{L^p(\mathbb{R})}\leq \|\rho(s)\|_{L^p(\mathbb{R})},\quad t>s>0.
\end{align}
\end{lemma}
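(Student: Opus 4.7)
The plan is to derive an $L^p$ energy identity by multiplying the equation by $p\rho^{p-1}$, and then to exploit the C\'ordoba--C\'ordoba pointwise inequality
\begin{equation*}
\Phi'(\rho)\,\Lambda^\beta\rho\ \ge\ \Lambda^\beta\Phi(\rho),\qquad \beta\in(0,2],
\end{equation*}
valid for any convex $C^1$ function $\Phi$, to show that both the diffusion and the transport contribute nonpositive amounts to $\frac{\di}{\di\tau}\|\rho(\tau)\|_{L^p}^p$. Recall that throughout Section~\ref{sec:bessel} we take $\gamma=0$, and by Theorem~\ref{thm:regularity} the strong solution $\rho(\cdot,\tau)$ is smooth and decays in $L^q(\mathbb{R})$ for every $q\in[\tfrac{1}{\alpha-1},\infty]$ when $\tau>0$, which legitimizes all of the integrations by parts below.

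First, I would rewrite the transport term. The product rule $\partial_x(\rho u)=u\,\partial_x\rho+\rho\,\partial_x u$ together with the identities $\partial_x u=\partial_x H\rho=\Lambda\rho$ and $p\rho^{p-1}\partial_x\rho=\partial_x(\rho^p)$ gives, after one integration by parts,
\begin{align*}
\int_{\mathbb{R}}p\rho^{p-1}\,\partial_x(\rho u)\,\di x
&=\int_{\mathbb{R}}\partial_x(\rho^p)\,u\,\di x+p\int_{\mathbb{R}}\rho^p\,\Lambda\rho\,\di x\\
&=-\int_{\mathbb{R}}\rho^p\,\Lambda\rho\,\di x+p\int_{\mathbb{R}}\rho^p\,\Lambda\rho\,\di x=(p-1)\int_{\mathbb{R}}\rho^p\,\Lambda\rho\,\di x.
\end{align*}
Combined with the diffusive contribution this yields the key identity
\begin{equation*}
\frac{\di}{\di\tau}\|\rho(\tau)\|_{L^p}^p
=-(p-1)\int_{\mathbb{R}}\rho^p\,\Lambda\rho\,\di x-\nu p\int_{\mathbb{R}}\rho^{p-1}\,\Lambda^\alpha\rho\,\di x.
\end{equation*}

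Next, I would apply C\'ordoba--C\'ordoba twice: with $\Phi(r)=r^{p+1}/(p+1)$ and $\beta=1$, giving $\rho^p\Lambda\rho\ge\tfrac{1}{p+1}\Lambda(\rho^{p+1})$; and with $\Phi(r)=r^p/p$ and $\beta=\alpha$, giving $\rho^{p-1}\Lambda^\alpha\rho\ge\tfrac{1}{p}\Lambda^\alpha(\rho^p)$. Since $\rho^p,\rho^{p+1}\in L^1(\mathbb{R})$, their fractional Laplacians have zero mean, $\int_{\mathbb{R}}\Lambda^\beta f\,\di x=\widehat{\Lambda^\beta f}(0)=0$; integrating the two pointwise inequalities and plugging the result into the identity above shows that $\frac{\di}{\di\tau}\|\rho(\tau)\|_{L^p}^p\le 0$, and integration on $[s,t]$ delivers \eqref{eq:maxmum}. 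The endpoint $p=1$ is degenerate but trivial: both right-hand-side terms vanish identically and the inequality reduces to mass conservation.

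The only real obstacle is that $r\mapsto r^{p-1}$ is not $C^1$ at the origin when $1\le p<2$, so a priori the C\'ordoba--C\'ordoba inequality cannot be applied directly with $\Phi(r)=r^p/p$. I would circumvent this by running the entire argument with the smooth convex approximation $\Phi_\varepsilon(r)=\int_0^r(s^2+\varepsilon)^{(p-1)/2}\,\di s$, and then sending $\varepsilon\to 0$; the uniform bounds on $\rho(\tau)$ in $L^1\cap L^\infty$ coming from Theorem~\ref{thm:regularity} supply the integrability needed for the passage to the limit.
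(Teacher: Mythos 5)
Your proposal is correct, and it reaches the same energy identity as the paper: after multiplying by $p\rho^{p-1}$ and integrating by parts, both arguments reduce matters to showing that $\int_{\mathbb{R}}\rho^{p}\Lambda\rho\,\di x\geq 0$ and $\int_{\mathbb{R}}\rho^{p-1}\Lambda^{\alpha}\rho\,\di x\geq 0$ (your rewriting of the transport term via $\partial_x(\rho u)=u\partial_x\rho+\rho\Lambda\rho$ is algebraically equivalent to the paper's integration by parts against $H\rho$). Where you diverge is in how these two signs are obtained. The paper symmetrizes the singular-integral representation of $\Lambda^{\beta}$ directly, writing each term as $\tfrac12\iint\frac{(\Phi'(\rho(x))-\Phi'(\rho(y)))(\rho(x)-\rho(y))}{|x-y|^{1+\beta}}\,\di x\,\di y$ with $\Phi'(r)=r^{p}$ resp.\ $r^{p-1}$, and concludes from monotonicity of $\Phi'$ on $[0,\infty)$. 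You instead invoke the C\'ordoba--C\'ordoba pointwise inequality $\Phi'(\rho)\Lambda^{\beta}\rho\geq\Lambda^{\beta}\Phi(\rho)$ together with $\int_{\mathbb{R}}\Lambda^{\beta}f\,\di x=0$. The integrated C\'ordoba--C\'ordoba inequality is, in substance, exactly the paper's symmetrization identity, so the two routes are close cousins; but your choice of the pointwise version costs you something, namely the $\varepsilon$-regularization of $r\mapsto r^{p-1}$ for $1\leq p<2$, which the paper's argument avoids entirely because symmetrization needs only monotonicity of $\Phi'$, not its differentiability. Your approach is nonetheless valid (with the standard caveats, which the paper also leaves implicit, that the boundary terms vanish and that $\rho^{p}$, $\rho^{p+1}$ and their fractional Laplacians are integrable on the time interval in question, supplied by the regularity and decay of the strong solution whenever $\|\rho(s)\|_{L^{p}}<\infty$).
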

\begin{proof}
For $p=1$, we have 
\[
\|\rho(t)\|_{L^1}\equiv\|\rho_0\|_{L^1},\quad t>0.
\]
For $p>1$, we have
\begin{align}\label{eq:e1}
\frac{1}{p}\frac{\di}{\di t}\int_{\mathbb{R}}\rho^p(x,t)\di x=\int_{\mathbb{R}}\rho^{p-1}\partial_t\rho\di x=-\int_{\mathbb{R}}\rho^{p-1}\partial_x(\rho H\rho)\di x-\int_{\mathbb{R}}\rho^{p-1}\nu\Lambda^\alpha\rho\di x.
\end{align}
For the first term in the right hand side of \eqref{eq:e1}, we have
\begin{align*}
-\int_{\mathbb{R}}\rho^{p-1}\partial_x(\rho H\rho)\di x=&\frac{p-1}{p}\int_{\mathbb{R}}\partial_x\rho^{p}  H\rho\di x\nonumber\\
=&-\frac{p-1}{p}\int_{\mathbb{R}}\rho^{p}(x,t)\int_{\mathbb{R}}\frac{\rho(x,t)-\rho(y,t)}{|x-y|^2}\di y\di x\\
=&-\frac{p-1}{2p}\int_{\mathbb{R}}\int_{\mathbb{R}}\frac{(\rho^{p}(x,t)-\rho^{p}(y,t))(\rho(x,t)-\rho(y,t))}{|x-y|^2}\di y\di x\\
\leq&0.
\end{align*}
For the second term in the right hand side of \eqref{eq:e1}, we have
\begin{align*}
-\nu\int_{\mathbb{R}}\rho^{p-1} \Lambda^\alpha\rho\di x=&-\frac{\nu}{2}\int_{\mathbb{R}}\int_{\mathbb{R}}\frac{(\rho^{p-1}(x,t)-\rho^{p-1}(y,t))(\rho(x,t)-\rho(y,t))}{|x-y|^{1+\alpha}}\di y\di x\\
\leq&0.
\end{align*}
Combining the above two inequalities and \eqref{eq:e1}, we obtain \eqref{eq:maxmum}.
\end{proof}

\begin{theorem}\label{thm:global}
Assume $1<\alpha\leq 2$ and $0\leq \rho_0\in L^{\frac{1}{\alpha-1}}(\mathbb{R})$. Then, the local mild solution $\rho$ given by Theorem \ref{thm:contraction} can be extended globally.
\end{theorem}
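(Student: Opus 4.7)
The plan is a standard continuation argument driven by the $L^p$ maximum principle of Lemma \ref{lmm:MaxP}. Let $T^{\ast}\in(0,+\infty]$ denote the maximal time of existence of the mild solution produced by Theorem \ref{thm:contraction} (with the regularity and nonnegativity of Theorems \ref{thm:regularity} and \ref{thm:spatialanaly}). I argue by contradiction: assume $T^{\ast}<\infty$ and show that $\rho$ can be extended past $T^{\ast}$.

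Fix $\varepsilon\in(0,T^{\ast})$. By Theorem \ref{thm:regularity}, $\rho$ is a nonnegative strong solution on $(0,T^{\ast})$ with $\rho(\varepsilon)\in L^p(\mathbb{R})$ for every $p\in[\tfrac{1}{\alpha-1},\infty]$. Applying Lemma \ref{lmm:MaxP} on the interval $[\varepsilon,T^{\ast})$ yields the uniform bound
\[
\|\rho(t_0)\|_{L^p}\le\|\rho(\varepsilon)\|_{L^p}=:M_p,\qquad t_0\in[\varepsilon,T^{\ast}),\ p\in[\tfrac{1}{\alpha-1},\infty].
\]
This is the decisive a priori control: every relevant $L^p$-norm of $\rho(t_0)$ stays bounded uniformly as $t_0\to T^{\ast}$.

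Next, I restart Theorem \ref{thm:contraction} at initial time $t_0$ with $\tilde\rho_0:=\rho(t_0)$ and aim for a new existence interval $[t_0,t_0+T_0]$ with $T_0>0$ independent of $t_0$. Since $\|G_\alpha(\cdot,s)\|_{L^1}=1$, Young's convolution inequality gives
\[
s^{\frac{\alpha-1}{2\alpha}}\|G_\alpha(\cdot,s)\ast\tilde\rho_0\|_{L^{2/(\alpha-1)}}\le s^{\frac{\alpha-1}{2\alpha}}\|\tilde\rho_0\|_{L^{2/(\alpha-1)}}\le s^{\frac{\alpha-1}{2\alpha}}M_{2/(\alpha-1)},
\]
so the smallness hypothesis \eqref{eq:important} holds on $[0,T_0]$ for any prescribed $a>0$ provided $T_0\le(a/M_{2/(\alpha-1)})^{2\alpha/(\alpha-1)}$, a bound independent of $t_0$. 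The last subtlety is the smallness condition on $a$ itself: in $X_T^a$ the radius in the $L^{1/(\alpha-1)}$-direction is $2\|\tilde\rho_0\|_{L^{1/(\alpha-1)}}$, which could in principle shrink. I handle this by replacing that radius with the fixed constant $M_{1/(\alpha-1)}+1$; the estimates \eqref{eq:rho2}--\eqref{eq:cont} run identically, and $a$ can then be chosen universally, depending only on $M_{1/(\alpha-1)}$ and $M_{2/(\alpha-1)}$.

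Choosing $t_0\in(T^{\ast}-T_0,T^{\ast})$ produces a mild solution on $[0,t_0+T_0]$ with $t_0+T_0>T^{\ast}$; uniqueness of mild solutions (via the same contraction argument) ensures that it coincides with $\rho$ on $[0,T^{\ast})$, contradicting maximality of $T^{\ast}$. The main technical hurdle is this uniform lower bound on $T_0$: without the $L^p$ maximum principle either the contraction ball size or the rate in the limit \eqref{eq:key2} could degenerate as $t_0\uparrow T^{\ast}$, which is precisely what Lemma \ref{lmm:MaxP} rules out.
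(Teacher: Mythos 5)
Your proposal is correct and follows essentially the same route as the paper: the $L^{p}$ maximum principle of Lemma \ref{lmm:MaxP} gives a uniform bound on $\|\rho(t_0)\|_{L^{2/(\alpha-1)}}$, which yields a time span $T_0=(a/\|\rho(t_0)\|_{L^{2/(\alpha-1)}})^{2\alpha/(\alpha-1)}$ for restarting Theorem \ref{thm:contraction} that is uniform in $t_0$, so the solution extends step by step for all time. Your extra remark about fixing the $L^{1/(\alpha-1)}$-ball radius so that $a$ can be chosen universally is a sensible refinement of a point the paper leaves implicit, but it does not change the argument.
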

\begin{proof}
Due to Theorem \ref{thm:contraction}, there exist $a>0$, $T>0$ and a unique local mild solution $\rho$ to \eqref{eq:Hilbert} in $X_T^a$ such that
\[
\sup_{0<t\leq T}t^{\frac{\alpha-1}{2\alpha}}\|G_\alpha(\cdot,t)\ast \rho_0\|_{L^{\frac{2}{\alpha-1}}(\mathbb{R}^d)}<a,\quad \sup_{0<t\leq T}t^{\frac{\alpha-1}{2\alpha}}\|\rho(t)\|_{L^{\frac{2}{\alpha-1}}}\leq 2a.
\]
Fix $0<t_0<T$, and combining Theorem \ref{thm:regularity} and Lemma \ref{lmm:MaxP} yields
\begin{align}\label{eq:claim}
\|\rho(s)\|_{L^{\frac{2}{\alpha-1}}}\leq \|\rho(t_0)\|_{L^{\frac{2}{\alpha-1}}},\quad \forall s\geq t_0.
\end{align}
From \eqref{eq:claim}  and \eqref{eq:fracdecay1} with $k=0$ and $p=q=\frac{2}{\alpha-1}$, we obtain
\[
\|G_\alpha(\cdot,t)\ast \rho(s)\|_{L^{\frac{2}{\alpha-1}}}\leq \|\rho(t_0)\|_{L^{\frac{2}{\alpha-1}}},\quad s\geq t_0.
\]
Set
\[
T_0:=\left(\frac{a}{\|\rho(t_0)\|_{L^{\frac{2}{\alpha-1}}}}\right)^{\frac{2\alpha}{\alpha-1}}.
\]
For $s\in[t_0,T]$, we have
\[
\sup_{0< t\leq T_0}t^{\frac{\alpha-1}{2\alpha}}\|G_\alpha(\cdot,t)\ast \rho(s)\|_{L^{\frac{2}{\alpha-1}} }\leq \sup_{0<t\leq T_0}t^{\frac{\alpha-1}{2\alpha}}\cdot \|\rho(t_0)\|_{L^{\frac{2}{\alpha-1}} } \leq a.
\]
Due to Theorem \ref{thm:contraction},  we can extend our solution to $s+T_0$ for any $t_0\leq s\leq T$. Moreover, this time span $T_0$ is uniform for any $s>t_0$. This proves global existence.

\end{proof}

\section{Global spatial analytic solutions  for the critical case $\alpha=1$}
In this section, we are going to prove the existence and uniqueness of spatial analytic solutions to the critical equation \eqref{eq:Hilbert} with $\alpha=1$, $\gamma>0$ for initial data $-\nu<\rho_0\in L^1(\mathbb{R})\cap H^s(\mathbb{R})$ ($s>1/2$).  When $\rho_0\geq-\mu$ for $0\leq \mu<\nu$, the  spatial analytic solutions exist at least in the time interval $(0,T)$ for $T=\frac{1}{\gamma}\ln(\frac{2\nu}{\mu}-1)$ ($T=\infty$ when $\mu=0$). When $\rho_0\geq0$, the  spatial analytic solutions exist globally and pointwisely convergent to the steady state are also obtained. 

\subsection{Well-posedness of complex Burgers equation on the upper half plane}\label{sec:analytic}
First, we derive the complex Burgers equation  from equation \eqref{eq:Hilbert} with $\alpha=1$. For $f,g\in L^{p}(\mathbb{R})$ ($p>1$), the Hilbert transform has the following properties (see e.g. \cite{Pandey}):
\[
H(Hf)=-f,\quad \partial_x(Hf)=H\partial_xf,~\textrm{ and }~H(fHg+gHf)=HfHg-fg.
\]
Applying the Hilbert transform to the  equation \eqref{eq:Hilbert} yields
\[
\partial_t(H\rho) + H\rho H\partial_x\rho - \rho \partial_x\rho-\gamma\partial_xH(\rho x) = \nu\partial_x\rho.
\]
Moreover, for $g:\mathbb{R}\to\mathbb{R}$, we have
\begin{align}\label{eq:formula}
H(xg(x))=&\frac{1}{\pi}\p\int_{\mathbb{R}}\frac{yg(y)}{x-y}\di y=\frac{1}{\pi}\p\int_{\mathbb{R}}\frac{(y-x)g(y)}{x-y}\di y+\frac{1}{\pi}\p\int_{\mathbb{R}}\frac{xg(y)}{x-y}\di y\nonumber\\
=&xHg(x)-\frac{1}{\pi}\int_{\mathbb{R}}g(x)\di x,
\end{align}
which implies
\begin{align}\label{fact2}
H(\rho x)=-\frac{\|\rho(t)\|_{L^1}}{\pi}+ux.
\end{align}
Combining the above two equations, we have
\begin{align}\label{eq:ubeta1}
\partial_tu+u\partial_xu-\rho \partial_x\rho -\gamma\partial_x(ux)= \nu\partial_x\rho.
\end{align}
Set
\[
f = u - i\rho,  \qquad u =H \rho.
\]
Hence, $f$ gives the trace of an Holomorphic function in the upper half plane. Combining \eqref{eq:Hilbert} and \eqref{eq:ubeta1} yields
\[
\partial_t f+f\partial_xf-\gamma\partial_x(fx)=i\nu\partial_x f,~~x\in\mathbb{R},~~t>0.
\]
This corresponds to the following complex equation in $\mathbb{C}_+$:
\begin{align}\label{eq:complexBurgers1}
\partial_t f+f\partial_zf-\gamma\partial_z(fz)=\partial_t f+f\partial_zf-\gamma z\partial_zf-\gamma f=i\nu\partial_z f,~~t>0.
\end{align}
By the linear transformation $g(z,t)=f(z,t)-\gamma z$, we have
\begin{multline*}
\partial_tg+g\partial_zg-\gamma^2 z=\partial_tf+(f-\gamma z)(\partial_zf-\gamma)-\gamma^2z\\
=\partial_t f+f\partial_zf-\gamma z\partial_zf-\gamma f=i\nu(\partial_z g+\gamma),
\end{multline*}
which is
\begin{align}\label{eq:complexBurgers}
\partial_tg+(g-i\nu)\partial_zg=\gamma^2 z+i\nu\gamma.
\end{align} 
Next, we derive the initial data for \eqref{eq:complexBurgers} with initial data $\rho_0$ for equation $\eqref{eq:Hilbert}$.
Let $\rho_0\in   L^1(\mathbb{R})\cap H^s(\mathbb{R})$ with $s>1/2$ be the initial data for equation \eqref{eq:Hilbert}. The initial data $\rho_0$  can be extended to a $\mathbb{C}_+$-holomorphic function by  Hilbert transform (also called Stieltjes transform, Borel transform or Markov function) for positive measures: 
\begin{align}\label{eq:initialf}
f_0(z):=\frac{1}{\pi}\int_{\mathbb{R}}\frac{\rho_0(s)}{z-s}\di s,\quad z=x+iy\in\mathbb{C}_+.
\end{align}
Direct calculation shows that
\begin{align*}
f_0(z)=\frac{1}{\pi}\int_{\mathbb{R}}\frac{\rho_0(s)}{z-s}\di s&=\frac{1}{\pi}\int_{\mathbb{R}}\frac{x-s}{y^2+(x-s)^2}\rho_0(s)\di s-i\frac{1}{\pi}\int_{\mathbb{R}}\frac{y}{y^2+(x-s)^2}\rho_0(s)\di s\\
&=:R\rho_0(x,y)-iP\rho_0(x,y),
\end{align*}
where $P\rho_0(x,y)$ and $R\rho_0(x,y)$ are given by the convolution of $\rho_0$ with the Poisson kernel and the conjugate Poisson kernel given by
\begin{align}\label{eq:PoissonKernel}
P_y(x):=\frac{1}{\pi}\frac{y}{y^2+x^2}~\textrm{ and }~R_y(x):=\frac{1}{\pi}\frac{x}{y^2+x^2}.
\end{align}
Furthermore, we have
\[
\lim_{y\to0+}[R\rho_0(x,y)-iP\rho_0(x,y)]=H\rho_0(x)-i\rho_0(x)~\textrm{for a.e.}~x\in\mathbb{R}.
\]
Hence, $f_0(x)=H\rho_0(x)-i\rho_0(x)~\textrm{for a.e.}~x\in\mathbb{R}$.
Let
\begin{align}\label{eq:initialg}
g_0(z):=f_0(z)-\gamma z,\quad z=x+iy\in \mathbb{C}_+.
\end{align}
Then, $g_0$ is a $\mathbb{C_+}$-holomorphic function.
Consider the following Cauchy problem of the Burgers type equation in $\mathbb{C}_+$:
\begin{gather}\label{eq:complexBurgers2}
\left\{
\begin{split}
&[\partial_tg+(g-i\nu)\partial_zg](z,t)=\gamma^2 z+i\nu\gamma,~~~~z=x+iy\in \mathbb{C}_+,\\
&g(z,0)=g_0(z).
\end{split}
\right.
\end{gather}
Next, we prove the existence and uniqueness of $\mathbb{C}_+$-holomorphic solutions to  \eqref{eq:complexBurgers2} by the characteristics method.
Consider the characteristics given by
\begin{equation}\label{eq:complexBurgerscharac}
\frac{\di}{\di t}Z(w,t)=g(Z(w,t),t)-i\nu,\quad Z(w,0)=w\in \mathbb{C}_+.
\end{equation}
Then,
\[
\frac{\di^2}{\di t^2 } Z(w,t)=\frac{\di}{\di t}g(Z(w,t),t)=[\partial_t g+(g-i\nu)\partial_zg](Z(w,t),t)=\gamma^2Z(w,t)+i\nu\gamma,
\]
with initial date
\[
Z(w,0)=w,\quad \frac{\di}{\di t}Z(w,t)\Big|_{t=0}=g_0(w)-i\nu,~~w\in\mathbb{C_+}.
\]
Equation \eqref{eq:complexBurgerscharac} gives the following complex trajectories:
\begin{gather}\label{eq:gloabltrajec}
Z(w,t)=\left\{
\begin{split}
&(w+i\frac{\nu}{\gamma}) \cosh \gamma t +\frac{1}{\gamma}(g_0(w)-i\nu)\sinh \gamma t-i\frac{\nu}{\gamma},\quad \gamma>0,\\ 
&(g_0(w)-i\nu)t+w=(f_0(w)-i\nu)t+w,~~\gamma=0.
\end{split}
\right.
\end{gather}
Here, we only treat the case for $\gamma>0$ and the proof of the case $\gamma=0$ is similar.  Let 
\[
Z(w,t)=Z_1(x,y,t)+iZ_2(x,y,t),~~w=x+iy\in \mathbb{C_+},
\]
and we have real part:
\begin{equation}\label{eq:realpart}
\begin{aligned}
Z_1(x,y,t)&=x \cosh  \gamma  t+ \frac{1}{\gamma} R\rho_0(x,y)\sinh \gamma t-x\sinh \gamma t\\
&=x e^{-\gamma t}+\frac{1}{\gamma} R\rho_0(x,y)\sinh \gamma t,
\end{aligned}
\end{equation}
and imaginary part:
\begin{equation}\label{eq:imaginarypart}
\begin{aligned}
Z_2(x,y,t)&=(y+\frac{\nu}{\gamma}) \cosh \gamma  t- (\frac{1}{\gamma} P\rho_0(x,y)+\frac{\nu}{\gamma})\sinh \gamma t-y\sinh \gamma t-\frac{\nu}{\gamma}\\
&= (y+\frac{\nu}{\gamma})e^{-\gamma t}-\frac{1}{\gamma} P\rho_0(x,y)\sinh \gamma t-\frac{\nu}{\gamma}.
\end{aligned}
\end{equation}
Because the initial date $g_0(w)$ in \eqref{eq:complexBurgers2} is a  $\mathbb{C}_+$-holomorphic function, $Z(w,t)$ given by \eqref{eq:gloabltrajec} is $\mathbb{C}_+$-holomorphic of $w$ for any $t\geq0$. Next, we give a lemma to show that for any fixed time $t>0$ the backward characteristics of \eqref{eq:gloabltrajec} are well defined on the set $\occ$. This result is an analogy of \cite[Lemma 2.2]{castro2008global}. We have:
\begin{lemma}\label{lmm:bijection}
Let $0\leq \mu<\nu$ and $-\mu\leq \rho_0\in  L^1(\mathbb{R})\cap H^s(\mathbb{R})$ with $s>1/2$. Denote $T=\frac{1}{\gamma}\ln(\frac{2\nu}{\mu}-1)$ ($T=\infty$ when $\mu=0$). Then
for fixed  $0<t_0<T$ and fixed $Z=Z_1+iZ_2\in \occ$, there exists a unique $w=x+iy\in \mathbb{C}_+$ such that \eqref{eq:realpart} and \eqref{eq:imaginarypart} hold. 
\end{lemma}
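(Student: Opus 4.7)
The plan is to recast the real system \eqref{eq:realpart}--\eqref{eq:imaginarypart} as the single holomorphic equation $Z(w,t)=Z$ for the $\mathbb{C}_+$-holomorphic map $Z(\cdot,t)$ and invoke the argument principle on a large half-disk in $\mathbb{C}_+$. A convenient rearrangement of \eqref{eq:gloabltrajec} is
\begin{equation*}
Z(w,t)=we^{-\gamma t}+\frac{\sinh\gamma t}{\gamma}f_0(w)+c_t,\qquad c_t:=\frac{i\nu}{\gamma}(e^{-\gamma t}-1),
\end{equation*}
which exhibits $Z(\cdot,t)$ as a perturbation of the affine map $w\mapsto we^{-\gamma t}+c_t$ by the Stieltjes transform \eqref{eq:initialf}. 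Since $\rho_0\in H^s(\mathbb{R})\cap L^1(\mathbb{R})$ with $s>1/2$, Sobolev embedding gives $\rho_0\in C_b(\mathbb{R})$, so $f_0$ extends continuously to $\occ$ with boundary value $H\rho_0(x)-i\rho_0(x)$.

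The crucial ingredient is the boundary estimate that $Z_2(x,0,t)<0$ uniformly in $x\in\mathbb{R}$ for $0<t<T$. Setting $y=0$ in \eqref{eq:imaginarypart} gives
\begin{equation*}
Z_2(x,0,t)=\frac{\nu}{\gamma}(e^{-\gamma t}-1)-\frac{\sinh\gamma t}{\gamma}\rho_0(x),
\end{equation*}
and combining with $\rho_0(x)\geq-\mu$ yields
\begin{equation*}
Z_2(x,0,t)\leq\frac{1}{2\gamma}\bigl[(2\nu-\mu)e^{-\gamma t}+\mu e^{\gamma t}-2\nu\bigr].
\end{equation*}
Viewing the right-hand side as a quadratic in $u=e^{\gamma t}$ with roots $u=1$ and $u=(2\nu-\mu)/\mu=e^{\gamma T}$, it is strictly negative on $0<\gamma t<\gamma T$. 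Hence for every target $Z\in\occ$, the map $F(w):=Z(w,t)-Z$ takes the real line into the open lower half plane and in particular never vanishes there.

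I would next apply the argument principle to $F$ on the boundary of the half-disk $\Gamma_R=\{w\in\occ:|w|\leq R\}$. The image of the diameter $[-R,R]$ lies in $\{\operatorname{Im}<0\}$ by the previous step. On the upper semicircle $w=Re^{i\theta}$, splitting the integral for $f_0$ at $|s|=R/2$ gives
\begin{equation*}
|f_0(Re^{i\theta})|\leq \frac{2\|\rho_0\|_{L^1}}{\pi R}+\frac{1}{\pi R\sin\theta}\int_{|s|>R/2}|\rho_0(s)|\,ds,
\end{equation*}
which vanishes uniformly on any sub-arc bounded away from the real axis, so the semicircle image is $C^0$-close to the near-semicircle $(Re^{i\theta}e^{-\gamma t}+c_t-Z)_{\theta\in[0,\pi]}$ of radius $Re^{-\gamma t}$ in the upper half plane. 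Concatenating the diameter and semicircle images yields a closed curve of winding number exactly one about the origin, so $F$ has exactly one zero inside $\Gamma_R$; letting $R\to\infty$ produces the unique $w\in\mathbb{C}_+$ with $Z(w,t)=Z$.

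The main obstacle is uniformity of the asymptotic on the semicircle near $\theta=0$ and $\theta=\pi$, where the naive bound on $f_0$ degrades like $1/(R\sin\theta)$. I plan to handle this by shrinking the end-arcs with $R$ — e.g.\ $\delta(R)=R^{-1/2}$ so that both the $L^1$ tail $\int_{|s|>R/2}|\rho_0|$ and the angular length $2\delta(R)$ tend to zero — and using continuity of $Z(\cdot,t)$ up to $\mathbb{R}$ on those small arcs to guarantee that the image curve is a small perturbation of the unperturbed half-circle and hence has the same winding number. Alternatively one could replace $\Gamma_R$ by the rectangle $[-R,R]+i[\epsilon,R]$ and let $\epsilon\to0$ after $R\to\infty$, reducing the analysis to the interior of $\mathbb{C}_+$ where all estimates are straightforward.
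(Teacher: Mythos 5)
Your proposal is correct, but it takes a genuinely different route from the paper. The paper proves the lemma by a two-step real-variable argument: for each fixed $x$ it first solves the imaginary-part equation \eqref{eq:imaginarypart} for $y$ (existence by the intermediate value theorem using $\lim_{y\to 0^+}Z_2<0$ and $\lim_{y\to+\infty}Z_2=+\infty$, uniqueness by a monotonicity/contradiction argument), and then solves the real-part equation \eqref{eq:realpart} for $x$ along the resulting curve $y_{Z_2}(x)$, where uniqueness reduces via the Cauchy--Riemann equations to the pointwise inequality $a/b+\partial_xR\rho_0>0$, again proved by contradiction using positivity of $\rho_0+\mu$. You instead treat $Z(\cdot,t_0)$ as a single holomorphic map, and your key boundary estimate --- that $Z_2(x,0,t_0)\le \frac{1}{2\gamma}[(2\nu-\mu)e^{-\gamma t_0}+\mu e^{\gamma t_0}-2\nu]<0$ for $0<t_0<T$ --- is exactly the paper's inequality \eqref{eq:mu} in disguise; from there the argument principle on a large half-disk delivers existence and uniqueness of the preimage in one stroke. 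Your approach is more conceptual and is essentially a self-contained degree-theoretic version of the Hadamard global inverse function theorem argument that the paper itself deploys later (Step 1 of the proof of Theorem \ref{thm:analyticalpha1}, using the nonvanishing Jacobian \eqref{eq:big0} and properness); the paper's elementary route has the advantage of producing, as byproducts, the explicit monotonicity facts that are reused there.

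One simplification you may wish to note: since $\rho_0,H\rho_0\in H^s(\mathbb{R})\hookrightarrow L^\infty(\mathbb{R})$ and the Poisson kernel is a probability density, $f_0=R\rho_0-iP\rho_0$ is \emph{uniformly bounded on all of} $\occ$, not merely decaying away from the real axis. Hence $F(w)=we^{-\gamma t_0}+c_{t_0}-Z+O(1)$ uniformly on the semicircle $|w|=R$, so for large $R$ its image lies in an annulus around the circle of radius $Re^{-\gamma t_0}$ avoiding the origin and its argument increment is $\pi+o(1)$; the $1/(R\sin\theta)$ degradation near $\theta=0,\pi$ that you propose to handle with shrinking end-arcs (or the rectangle contour) never actually causes a problem. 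With that observation the winding-number count --- argument change $\pi-o(1)$ along the image of the diameter (confined to $\{\operatorname{Im}<0\}$, so determined by its endpoints) plus $\pi+o(1)$ along the image of the semicircle, totalling exactly $2\pi$ --- closes the proof with no remaining gaps.
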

\begin{proof} 
Given $t_0>0$, denote
\[
a:=e^{- \gamma t_0},\quad b:=\frac{1}{\gamma}\sinh \gamma  t_0.
\]
Then \eqref{eq:realpart} and \eqref{eq:imaginarypart} become
\begin{align*}
Z_1 = ax+ b R\rho_0(x,y), \quad Z_2= ay-b P\rho_0(x,y)-(1-a)\frac{\nu}{\gamma}.
\end{align*}

\textbf{Step 1.} In this step, we prove that for any $x\in\mathbb{R}$, there exists a unique $y>0$ satisfies \eqref{eq:imaginarypart} for $Z_2\geq0$ and $0<t_0<T$.
Notice that
\begin{align}\label{eq:mu}
\frac{1-a}{b}\frac{\nu}{\gamma}=2\nu\frac{1-e^{-\gamma t_0}}{e^{\gamma t_0}-e^{-\gamma t_0}}=\frac{2\nu}{e^{\gamma t_0}+1}>\frac{2\nu}{e^{\gamma T}+1}=\mu.
\end{align}
Because $\rho_0\in L^\infty$, $P\rho_0(x,y)$ is a bounded function on $\mathbb{R}^2_+$.  By the property of Poisson kernel, we have $\lim_{y\to+\infty}P\rho_0(x,y)=0$ and hence
\begin{equation}\label{eq:proper1}
\begin{aligned}
&\lim_{y\to+\infty}Z_2(x,y,t_0)=+\infty,~~\\
&\lim_{y\to0+}Z_2(x,y,t_0)=-b \rho_0(x)+(a-1)\frac{\nu}{\gamma}=-b\left[\rho_0(x)+\frac{1-a}{b}\frac{\nu}{\gamma}\right]<0.
\end{aligned}
\end{equation}
Hence, for any fixed $Z_2\geq0$, there exists a point $y>0$ depending on $x$ such that
\[
Z_2=a(y+\frac{\nu}{\gamma})-b P\rho_0(x,y)-\frac{\nu}{\gamma}.
\]
Next, we prove the uniqueness of $y$. Suppose  there exist $y_1>y_2$ such that
\begin{align*}
Z_2=ay_1-b P\rho_0(x,y_1)+(a-1)\frac{\nu}{\gamma}=ay_2-b P\rho_0(x,y_2)+(a-1)\frac{\nu}{\gamma}.
\end{align*}
Because $P\rho_0(x,y)+\mu = P(\rho_0+\mu)(x,y)>0$, we have
\[
y_1,y_2>Z_2/a-(1-1/a)\nu/\gamma-\mu b/a,
\]
and
\[
\frac{P(\rho_0+\mu)(x,y_1)}{y_1-Z_2/a+(1-1/a)\nu/\gamma+\mu b/a}=\frac{P(\rho_0+\mu)(x,y_2)}{y_2-Z_2/a+(1-1/a)\nu/\gamma+\mu b/a}=\frac{a}{b}.
\]
Because function
\[
h(y)=\frac{y}{y-Z_2/a+(1-1/a)\nu/\gamma+\mu b/a}\cdot \frac{1}{y^2+(x-s)^2}
\]
is a decreasing function for $y>Z_2/a-(1-1/a)\nu/\gamma+\mu b/a$, we obtain a contradiction.

Now we denote by $y_{Z_2}(x)>0$ the solution of \eqref{eq:imaginarypart} with fixed $Z_2\geq0$, $t_0>0$ and $x\in\mathbb{R}$. Hence, we obtain
\begin{align}\label{eq:yx}
a(y_{Z_2}(x)+\frac{\nu}{\gamma})-\frac{\nu}{\gamma}-Z_2=bP\rho_0(x,y_{Z_2}(x)).
\end{align}

\textbf{Step 2.} In this step, we prove there exits a unique $x$ satisfies \eqref{eq:realpart} for fixed $Z_1,Z_2$ and $t_0.$
Since $\rho_0\in L^1(\mathbb{R})\cap H^s(\mathbb{R})$ ($s>1/2$), it follows that $H\rho_0\in L^\infty(\mathbb{R})$ and therefore $R\rho_0=PH\rho_0$ is a bounded function over $\mathbb{R}^2_+$. Furthermore,
\begin{align}\label{eq:proper2}
\lim_{x\to\pm\infty}[ax+bR\rho_0(x,y_{Z_2}(x))]=\pm\infty.
\end{align}
Hence, for any $Z_1\in\mathbb{R}$, we can find a $x\in\mathbb{R}$ such that
\[
Z_1=ax+bR\rho_0(x,y_{Z_2}(x)).
\]
To prove the uniqueness, we only have to prove the following function
\[
q(x)=ax+b R\rho_0(x,y_{Z_2}(x)),
\]
is an increasing function. 
Taking derivative of \eqref{eq:yx} with respect to $x$ gives
\begin{align}\label{eq:derivativeyx}
\frac{\di }{\di x}y_{Z_2}(x)= \frac{\partial_xP\rho_0(x,y_{Z_2}(x))}{a/b-\partial_yP\rho_0(x,y_{Z_2}(x))}.
\end{align}
Use \eqref{eq:derivativeyx} and the Cauchy-Riemann equations
\begin{align}\label{eq:CauchyRiemann}
\partial_x R\rho_0=-\partial_yP\rho_0,\quad \partial_x P\rho_0=\partial_yR\rho_0,
\end{align}
and taking derivative of $q(x)$ gives
\[
\frac{\di}{\di x}q(x)=\frac{b(a/b+\partial_x R\rho_0)^2+b(\partial_x P\rho_0)^2}{a/b+\partial_x R\rho_0}(x,y_{Z_2}(x)).
\]
To prove the increasing of $q(x)$, it suffices to show
\begin{align}\label{eq:increasing}
a/b +\partial_x R\rho_0(x,y)>0
\end{align}
for any $(x,y)\in\mathbb{R}^2_+$ satisfying $y>0$ and $a(y+\frac{\nu}{\gamma})-b P\rho_0(x,y)-\frac{\nu}{\gamma}\geq0$, i.e., $a(y+\frac{\nu}{\gamma})-b P(\rho_0+\mu)(x,y)-\frac{\nu}{\gamma}+b\mu\geq0$. 
We prove this by a contradiction argument. Suppose that
\[
a/b+\partial_x R\rho_0(x_0,y_0)\leq 0
\]
for some point $(x_0,y_0)\in\mathbb{R}^2_+$ with 
\begin{align}\label{eq:contradiction}
ay_0-bP(\rho_0+\mu)(x_0,y_0)\geq (1-a)\frac{\nu}{\gamma}-b\mu>0,
\end{align}
where we used \eqref{eq:mu} in the last inequality.
Due to
\[
\int_{\mathbb{R}}\frac{-y^2+s^2}{(y^2+s^2)^2}\di s=0,~~y>0,
\]
we have
\begin{align*}
-a/b&\geq \partial_x R\rho_0(x_0,y_0)=\frac{1}{\pi}\int_{\mathbb{R}}\frac{y_0^2-(x_0-s)^2}{[y_0^2+(x_0-s)^2]^2}\rho_0(s)\di s\\
&=\frac{1}{\pi}\int_{\mathbb{R}}\frac{y_0^2-(x_0-s)^2}{[y_0^2+(x_0-s)^2]^2}[\rho_0(s)+\mu]\di s>\frac{1}{\pi}\int_{\mathbb{R}}\frac{-y_0^2-(x_0-s)^2}{[y_0^2+(x_0-s)^2]^2}[\rho_0(s)+\mu]\di s\\
&=-\frac{P(\rho_0+\mu)(x_0,y_0)}{y_0},
\end{align*}
which is a contradiction with \eqref{eq:contradiction}.
\end{proof}
From the above lemma, we know that the backward characteristics are well defined on $ \occ$ in the time interval $(0,T)$. More importantly, for any $Z\in \occ$ the initial point $w$ must be interior point in $\mathbb{C}_+$.  For any $t\geq0$, we denote the backward characteristics as:
\[
Z^{-1}(\cdot,t): \occ \to \mathbb{C}_+.
\]
From the uniqueness in Lemma \ref{lmm:bijection}, $Z^{-1}(\cdot,t)$ is an $1-1$ map.

\subsection{Spatial analytic solutions to equation \eqref{eq:Hilbert} with $\alpha=1$}\label{sec:analyticity}
By Lemma \ref{lmm:bijection}, we have the following theorem which covers the results of Theorem \ref{thm:critical}:
\begin{theorem}\label{thm:analyticalpha1}
Let $0\leq \mu<\nu$ and $-\mu\leq \rho_0\in  L^1(\mathbb{R})\cap H^s(\mathbb{R})$ with $s>1/2$. Denote $T=\frac{1}{\gamma}\ln(\frac{2\nu}{\mu}-1)$. Then, we have

$\mathrm{(i)}$ The complex Burgers equation \eqref{eq:complexBurgers2} has a unique $\occ$-holomorphic solution $g(\cdot,t)$ for $t\in(0,T)$, and $\frac{\partial^k}{\partial t^k}g(\cdot,t)$ is a holomorphic function of $z$ on $\occ$ for any positive integer $k$ and $t>0$.

$\mathrm{(ii)}$ For any $t>0$, the trace of $f(z,t)=g(z,t)+\gamma z$ on the real line gives an  spatial analytic solution $\rho(x,t)\geq -\mu e^{\gamma t}$ to the equation \eqref{eq:Hilbert} with $\rho(x,0)=\rho_0(x)$ and  $\frac{\partial^k}{\partial t^k}\rho(x,t)$ is an analytic function  of $x\in\mathbb{R}$ for any positive integer $k$. Moreover, the total mass $\|\rho(t)\|_{L^1}$ is conserved:
\begin{align}\label{eq:L1conserved}
\|\rho(t)\|_{L^1}=\|\rho_0\|_{L^1}.
\end{align}

$\mathrm{(iii)}$ For $\gamma>0$ and $\mu=0$, the solution $g(z,t)$ exists globally and converges to the steady state:
\[
\lim_{t\to\infty}g(z,t)=i\nu-\sqrt{(\gamma z+i\nu)^2-2\gamma},~~\forall z\in\mathbb{C}_+,
\]
and \eqref{eq:steadypho} in Theorem \ref{thm:critical} holds.

\end{theorem}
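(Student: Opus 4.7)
The plan is to build the solution via the explicit characteristic formula \eqref{eq:gloabltrajec} combined with Lemma \ref{lmm:bijection}. For (i), because $g_0$ is $\mathbb{C}_+$-holomorphic, the forward trajectory $w\mapsto Z(w,t)$ is holomorphic in $w$. Lemma \ref{lmm:bijection} shows it is injective from $\mathbb{C}_+$ onto a set containing $\occ$ for $0<t<T$, so $\partial_w Z\neq 0$ by general facts about injective holomorphic maps, and the inverse $Z^{-1}(\cdot,t)$ is holomorphic on an open neighbourhood of $\occ$. I would then \emph{define} $g(z,t):=\dot Z(w,t)+i\nu$ with $w=Z^{-1}(z,t)$. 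The second-order relation $\ddot Z=\gamma^2 Z+i\nu\gamma$ translates directly into \eqref{eq:complexBurgers2}, and uniqueness is inherited from the uniqueness of the characteristic ODE \eqref{eq:complexBurgerscharac}. Higher time derivatives remain holomorphic in $z$ because $\partial_t^k g(Z(w,t),t)$ is an explicit polynomial in $\cosh\gamma t,\sinh\gamma t, w$ and $g_0(w)$.

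For (ii), set $f:=g+\gamma z$ and $\rho(x,t):=-\mathrm{Im}\,f(x,t)$ on $\mathbb{R}$. Assembling the hyperbolic terms in $\dot Z+\gamma Z$ yields the clean identity
\begin{equation*}
f(Z(w,t),t)=f_0(w)\,e^{\gamma t},
\end{equation*}
so whenever $x=Z(w,t)\in\mathbb{R}$ with $w=\tilde x+i\tilde y\in\mathbb{C}_+$ (Lemma \ref{lmm:bijection}) we obtain $\rho(x,t)=P\rho_0(\tilde x,\tilde y)\,e^{\gamma t}$. Since the Poisson extension preserves lower bounds, $\rho_0\geq -\mu$ produces $\rho(x,t)\geq -\mu e^{\gamma t}$, and when $\mu=0$ one has $\rho\geq 0$. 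To confirm that $\rho$ satisfies \eqref{eq:Hilbert} at $\alpha=1$, I would reverse the Hilbert-transform derivation of \eqref{eq:complexBurgers} from \eqref{eq:Hilbert} given at the start of this section: take the imaginary part of \eqref{eq:complexBurgers} restricted to $\mathbb{R}$ and use \eqref{fact2}. The initial condition is immediate from $f(x,0)=f_0(x)=H\rho_0(x)-i\rho_0(x)$. For mass conservation, the identity above together with the large-$|w|$ asymptotics $Z(w,t)\sim we^{-\gamma t}$ and $wf_0(w)\to\|\rho_0\|_{L^1}/\pi$ give $\pi zf(z,t)\to\|\rho_0\|_{L^1}$ as $|z|\to\infty$, and this limit equals $\int\rho(\cdot,t)\,dx$.

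For (iii) with $\mu=0$, $T=\infty$ so global existence is automatic. For the steady state, introduce $\xi=w+i\nu/\gamma$ and $\eta=(g_0(w)-i\nu)/\gamma$; using $\xi+\eta=f_0(w)/\gamma$ produces the key identity
\begin{equation*}
(g(z,t)-i\nu)^2-(\gamma z+i\nu)^2=-f_0(w)\bigl(2\gamma w-f_0(w)+2i\nu\bigr),\qquad w=Z^{-1}(z,t),
\end{equation*}
in which all $t$-dependence is hidden inside $w$. For a fixed $z\in\mathbb{C}_+$, the characteristic equation forces $f_0(w(t))\sim 2\gamma z e^{-\gamma t}\to 0$, which in turn forces $|w(t)|\to\infty$; the asymptotics of $f_0$ at infinity then give $wf_0(w)\to\|\rho_0\|_{L^1}/\pi$ so that the right-hand side converges. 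The branch condition $g_\infty\sim-\gamma z$ at infinity singles out $g_\infty=i\nu-\sqrt{(\gamma z+i\nu)^2-2\gamma}$, and computing $\rho_\infty(x)=-\mathrm{Im}\,f_\infty(x)$ via the standard real/imaginary decomposition of $\sqrt{A+iB}$ with $A=\gamma^2x^2-\nu^2-2\gamma$ and $B=2\nu\gamma x$ produces \eqref{eq:steadypho}. The most delicate step will be the precise asymptotic analysis of $w(t)=Z^{-1}(z,t)$, namely showing $|w(t)|$ grows like $e^{\gamma t}$ with the balance needed to extract the finite limit of $wf_0(w)$ and to keep the chosen branch of the square root consistent across $t>0$.
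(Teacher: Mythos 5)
Your architecture for (i) and (ii) matches the paper's: forward characteristics \eqref{eq:gloabltrajec}, inversion via Lemma \ref{lmm:bijection}, the identity $f(Z(w,t),t)=f_0(w)e^{\gamma t}$, and positivity of the Poisson extension. One point in (i) needs repair: you infer $\partial_wZ\neq 0$ from ``injective holomorphic maps have nonvanishing derivative,'' but Lemma \ref{lmm:bijection} only gives uniqueness of preimages of points of $\occ$, and $Z^{-1}(\occ,t)$ is a closed (not open) subset of $\mathbb{C}_+$; at a point $w_0$ with $Z(w_0,t)\in\mathbb{R}$ you have no open neighbourhood on which injectivity is known, so the standard fact does not apply there. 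The paper instead computes the Jacobian directly via the Cauchy--Riemann equations, $|Z_w|^2=[a+b\,\partial_xR\rho_0]^2+[b\,\partial_xP\rho_0]^2$, and uses the inequality \eqref{eq:increasing} (established inside the proof of Lemma \ref{lmm:bijection}) to see this is strictly positive on the relevant set; it then invokes properness and Hadamard's global inverse function theorem to get a holomorphic inverse on $\occ$. You should either import that computation or restrict your ``injectivity'' argument to the open preimage of $\mathbb{C}_+$ and handle the boundary separately.

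For (iii) your route is genuinely different and algebraically cleaner: the identity $(g-i\nu)^2-(\gamma z+i\nu)^2=B^2-A^2=-f_0(w)\bigl(2\gamma w-f_0(w)+2i\nu\bigr)$ with $A=\gamma w+i\nu$, $B=g_0(w)-i\nu$ is correct and collapses all the hyperbolic functions, whereas the paper passes to the limit componentwise in $R\rho_0$ and $P\rho_0$ by dominated convergence. However, the step you defer --- that $w(t)=Z^{-1}(z,t)$ escapes to infinity \emph{nontangentially} at rate $e^{\gamma t}$ so that $wf_0(w)\to\|\rho_0\|_{L^1}/\pi$ --- is exactly where the paper's Appendix C does its real work: it shows $z_i(t)=\mathrm{Im}(e^{-\gamma t}w(t))$ satisfies $0<\Im(z)\leq z_i(t)\leq \Im(z)+1/\sqrt{2\gamma}+\nu/\gamma$ and that $z_r(t)=\mathrm{Re}(e^{-\gamma t}w(t))$ is bounded, which simultaneously yields $|w(t)|\to\infty$, the nontangential condition $\mathrm{Im}(w)/|w|$ bounded below, and the existence of subsequential limits $w^*$ solving $z=w^*+\tfrac{1}{2\gamma\pi w^*}-i\nu/\gamma$. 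Your claim that ``$f_0(w(t))\to0$ forces $|w(t)|\to\infty$'' is not immediate: $f_0$ is zero-free in $\mathbb{C}_+$ when $\rho_0\geq0$, but $f_0(w(t))$ could tend to zero with $w(t)$ approaching the real axis outside $\mathrm{supp}\,\rho_0$, and without the lower bound $\mathrm{Im}(e^{-\gamma t}w(t))\geq\Im(z)$ you cannot rule this out nor justify $wf_0(w)\to\|\rho_0\|_{L^1}/\pi$. So the identity is a nice shortcut for the algebra, but the boundedness estimates on $e^{-\gamma t}Z^{-1}(z,t)$ must still be supplied essentially as in the paper.
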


\begin{proof}
\textbf{Step 1.} Proof of (i).
From Lemma \ref{lmm:bijection}, we have $\occ\subset\{Z(w,t):~~w\in \mathbb{C}_+\}$ and $Z^{-1}(\cdot,t)$ is well defined on $\occ$ for any fixed time $t>0$. Denote the preimage of $Z(\cdot,t)$ as:
\[
Z^{-1}(\occ,t):=\Big\{w\in \mathbb{C}_+;~~Z(w,t)\in \occ\Big\}.
\]
Denote
\[
a(t):=e^{- \gamma t},\quad b(t):=\frac{1}{\gamma}\sinh  \gamma t.
\]
For $(x,y)\in\mathbb{R}^2_+$ and $Z_2(x,y,t)\geq0$, by the Cauchy-Riemann equation \eqref{eq:CauchyRiemann},  we have
\begin{align}\label{eq:big0}
|Z_w(w,t)|=\left|\frac{\partial(Z_1,Z_2)}{\partial (x,y)}\right|(x,y)=&\left|
\begin{array}{cc}
\partial_xZ_1 & \partial_yZ_1 \nonumber\\
\partial_xZ_2 & \partial_yZ_2
\end{array}
\right|=\left|
\begin{array}{cc}
a(t)+b(t)\partial_xR\rho_0 & b(t)\partial_yR\rho_0 \nonumber\\
-b(t)\partial_xP\rho_0 & a(t)-b(t)\partial_yP\rho_0
\end{array}
\right|\\
=&\Big[a(t)+b(t)\partial_xR\rho_0\Big]^2+\Big[b(t)\partial_xP\rho_0\Big]^2\Big|_{(x,y)}>0.
\end{align}
Due to \eqref{eq:proper1} and \eqref{eq:proper2}, we obtain
\[
|Z(w,t)|\to+\infty~\textrm{ as }~|w|\to +\infty.
\]
which means $Z(\cdot,t)$ is proper \cite[Definition 6.2.2]{krantz2012implicit}.
By the Hadamard's global inverse function theorem \cite[Theorem 6.2.8]{krantz2012implicit}, there exists a inverse function $Z^{-1}(\cdot,t)$ such that
\[
Z^{-1}(\cdot,t):~\occ \to Z^{-1}(\occ,t)
\]
is a bijection. We also know $Z^{-1}$ is $\occ$-holomorphic since $Z$ is $\mathbb{C}_+$-holomorphic. Moreover, for any  $z\in \occ$, there exists $w=Z^{-1}(z,t)\in \mathbb{C}_+$. Due to $z=Z(Z^{-1}(z,t),t) \in \occ$  and $|Z_w(w,t)|\neq 0$ (by \eqref{eq:big0}), we have
\[
\partial_tZ^{-1}(z,t)=-\frac{\partial_tZ(w,t)}{\partial_wZ(w,t)},\quad w=Z^{-1}(z,t).
\]
Because of \eqref{eq:gloabltrajec}, we know $\frac{\partial^k}{\partial t^k}Z(w,t)$ is $\mathbb{C}_+$-holomorphic for any positive integer $k$.
Hence, $\frac{\partial^k}{\partial t^k}Z^{-1}(z,t)$ is $\occ$-holomorphic for any positive integer $k$.
From \eqref{eq:gloabltrajec}, we have
\begin{align}\label{eq:z}
z=(Z^{-1}(z,t)+i\frac{\nu}{\gamma}) \cosh \gamma t +\frac{1}{\gamma}(g_0(Z^{-1}(z,t))-i\nu)\sinh \gamma t-i\frac{\nu}{\gamma},\quad z\in \occ.
\end{align}
By \eqref{eq:complexBurgerscharac}, we obtain
\[
g(Z(w,t),t)=\frac{\di }{\di t}Z(w,t)+i\nu=(\gamma w+i\nu)\sinh \gamma t+(g_0(w)-i\nu)\cosh \gamma t+i\nu.
\]
Hence,
\begin{align}\label{eq:g}
g(z,t)=\gamma Z^{-1}(z,t)\sinh\gamma t+g_0(Z^{-1}(z,t))\cosh\gamma t+i\nu(1- e^{-\gamma t}),
\end{align}
which is a $\occ$-holomorphic solution to the complex Burgers equation \eqref{eq:complexBurgers2} satisfying $g(z,0)=g_0(z)$. Moreover, due to the time regularity for $Z^{-1}(z,t)$, we know that $\frac{\partial^k}{\partial t^k}g(z,t)$ is $\occ$-holomorphic for any positive integer $k$ and $t>0$.

\textbf{Step 2.} Proof of (ii).
A $\occ$-holomorphic solution to \eqref{eq:complexBurgers1}  is given by
\begin{align}\label{eq:f}
f(z,t):=g(z,t)+\gamma z,~~z\in\occ,~~t>0,
\end{align}
with initial data $f_0(z)=R\rho_0(x,y)-i P\rho_0(x,y)$, $z=x+iy\in\mathbb{C}_+$.  Combining \eqref{eq:z} and \eqref{eq:g}, we obtain for $z\in\occ$:
\begin{equation}\label{eq:fz}
\begin{aligned}
z&= e^{-\gamma t}Z^{-1}(z,t)+\frac{1}{\gamma}f_0(Z^{-1}(z,t))\sinh\gamma t+i\frac{\nu}{\gamma}(e^{-\gamma t}-1),\\
f(z,t)&= f_0(Z^{-1}(z,t))e^{\gamma t}.
\end{aligned}
\end{equation}
Consider the trace of $f(z,t)$ on the real line and define:
\[
f(x,t)=:u(x,t)-i \rho(x,t).
\]
Due to Lemma \ref{lmm:bijection}, for any $x\in\mathbb{R}$, we have $Z^{-1}(x,t)=:a_x+ib_x\in\mathbb{C_+}$ with some positive real number $b_x>0$. From \eqref{eq:fz}, we have
\[
f(x,t)= f_0(a_x+ib_x)e^{\gamma t}=R\rho_0(a_x,b_x)e^{\gamma t}-i P\rho_0(a_x,b_x)e^{\gamma t}
\]
Therefore,
\begin{align}
\rho(x,t)=P\rho_0(a_x,b_x)e^{\gamma t}=P(\rho_0+\mu)(a_x,b_x)e^{\gamma t}-\mu e^{\gamma t}\geq -\mu e^{\gamma t},~~x\in\mathbb{R}.
\end{align}
Hence, $\rho(x,t)$ is an  spatial analytic solution of \eqref{eq:Hilbert}. Moreover, by the uniqueness of solutions to the characteristics equation \eqref{eq:complexBurgers2} we know  solutions to equation \eqref{eq:Hilbert} is unique.

\textbf{Step 3.} The proof of (iii) follows from the method in  \cite{rogers1993interacting} and we put it into appendix \ref{apd:convergence}. 
\end{proof}

\begin{remark}
\begin{enumerate}
\item When $\nu=0$, $\rho_\infty$ given by \eqref{eq:steadypho} reduces to \cite[Eq. (2.15)]{gao2020large}. For $\gamma=0$, we have $\rho_\infty=0.$
\item Comparing with Theorem \ref{thm:analyticalpha1}, \cite[Theorem 4.1]{castro2008global} and \cite[Theorem 4.8]{castro2008global}, a nature conjecture is that the $\|\partial_xH\rho\|_{L^\infty}$ blows up in finite time when initial data satisfies $\rho_0(x_0)<0$ for some $x_0\in\mathbb{R}$. According to \cite[Remark 2.1]{gao2020large}, the blow-up behavior  is much more complicated for $\gamma>0$ and $\nu=0$ (blow-up along a curve),  while the the blow-up behavior for $\gamma=\nu=0$ is simpler (blow up along a straight line).

\end{enumerate}
\end{remark}

\noindent\textbf{Acknowledgements} 
X. Xue was supported by the Natural Science Foundation of China (grants 11731010 and 11671109).

\appendix

\section{Proof of time continuity of $S\rho(t)$ in Theorem \ref{thm:contraction}}\label{apd:continuity}
\begin{proof}
Since the first part $G_\alpha(\cdot,t)\ast \rho_0$ corresponds to the solution of fractional heat equation, it is continuous with respect to $t$ in space $L^{\frac{1}{\alpha-1}}(\mathbb{R})$. Hence, we only need to show continuity of the second term 
\[
\rho_2(x,t):=\int_0^t\partial_xG_\alpha(\cdot,t-s)\ast(\rho(s)H\rho(s))\di s.
\]
Let $t>\tau>0$ and we have
\begin{multline*}
\|\rho_2(t)-\rho_2(\tau)\|_{L^{\frac{1}{\alpha-1}}}\leq \left\|\int_\tau^t\partial_xG_\alpha(\cdot,t-s)\ast(\rho(s)H\rho(s))\di s\right\|_{L^{\frac{1}{\alpha-1}}}\\
+\left\|\int_0^\tau\partial_xG_\alpha(\cdot,t-s)\ast(\rho(s)H\rho(s))-\partial_xG_\alpha(\cdot,\tau-s)\ast(\rho(s)H\rho(s))\di s\right\|_{L^{\frac{1}{\alpha-1}}}=:I_1+I_2.
\end{multline*}
For $I_1$, we have
\begin{align*}
I_1=\left\|\int_\tau^t\partial_xG_\alpha(\cdot,t-s)\ast(\rho(s)H\rho(s))\di s\right\|_{L^{\frac{1}{\alpha-1}}}\leq Ca^2\int_{\tau/t}^1(1-s)^{-\frac{1}{\alpha}}s^{-\frac{\alpha-1}{\alpha}}\di s\to 0~\textrm{ as }~t\to\tau.
\end{align*}
For $I_2$, set $g(x,s):=\partial_xG_\alpha(\cdot,\tau-s)\ast(\rho(s)H\rho(s))$ for $0<s<\tau$, and then
\[
\partial_xG_\alpha(\cdot,t-s)\ast(\rho(s)H\rho(s))=G_\alpha(\cdot,t-\tau)\ast g(s).
\]
We have
\begin{equation}\label{eq:I2}
\begin{aligned}
I_2=\left\|\int_0^\tau G_\alpha(\cdot,t-\tau)\ast g(s)-g(s)\di s\right\|_{L^{\frac{1}{\alpha-1}}}\leq\int_0^\tau\left\| G_\alpha(\cdot,t-\tau)\ast g(s)-g(s)\right\|_{L^{\frac{1}{\alpha-1}}}\di s.
\end{aligned}
\end{equation}
Next, we estimate the integrand $\|G_\alpha(\cdot,t-\tau)\ast g(s)-g(s)\|_{L^{\frac{1}{\alpha-1}}}$. For arbitrary $r>0$, by Jensen's inequality we have 
\begin{equation}\label{eq:sindpendent}
\begin{aligned}
\|G_\alpha(\cdot,t-\tau)\ast &g(s)-g(s)\|_{L^{\frac{1}{\alpha-1}}}^{\frac{1}{\alpha-1}}=\int_{\mathbb{R}}\left|\int_{\mathbb{R}}G_\alpha(x-y,t-\tau)[g(y,s)-g(x,s)]\di y\right|^{\frac{1}{\alpha-1}}\di x\\
\leq&\int_{\mathbb{R}}\int_{\mathbb{R}}G_\alpha(x-y,t-\tau)|g(y,s)-g(x,s)|^{\frac{1}{\alpha-1}}\di y\di x\\
\leq &\int_{\mathbb{R}}\int_{B(x,r)}G_\alpha(x-y,t-\tau)|g(y,s)-g(x,s)|^{\frac{1}{\alpha-1}}\di y\di x\\
&+\int_{\mathbb{R}}\int_{\mathbb{R}\setminus B(x,r)}G_\alpha(x-y,t-\tau)|g(y,s)-g(x,s)|^{\frac{1}{\alpha-1}}\di y\di x=:I_{21}+I_{22}.
\end{aligned}
\end{equation}
For $I_{21}$, we have
\begin{equation}\label{eq:firstterm}
\begin{aligned}
I_{21}=&\int_{\mathbb{R}}\int_{B(x,r)}G_\alpha(x-y,t-\tau)|g(y,s)-g(x,s)|^{\frac{1}{\alpha-1}}\di y\di x\\
=&\int_{\mathbb{R}}\int_{B(0,r)}G_\alpha(z,t-\tau)|g(x+z,s)-g(x,s)|^{\frac{1}{\alpha-1}}\di z\di x\\
=&\int_{B(0,r)}G_\alpha(z,t-\tau)\int_{\mathbb{R}}|g(x+z,s)-g(x,s)|^{\frac{1}{\alpha-1}}\di x\di z\\
\leq&\int_{B(0,r)}G_\alpha(z,t-\tau)\sup_{|h|\leq r}\int_{\mathbb{R}}|g(x+h,s)-g(x,s)|^{\frac{1}{\alpha-1}}\di x\di z\\
\leq &\sup_{|h|\leq r}\int_{\mathbb{R}}|g(x+h,s)-g(x,s)|^{\frac{1}{\alpha-1}}\di x.
\end{aligned}
\end{equation}
Notice that $\partial_xG_\alpha(x,t)=t^{-\frac{2}{\alpha}}\partial_xG_\alpha(t^{-\frac{1}{\alpha}}x,1)$. Denote
\[
f(x)=(\tau-s)^{-\frac{2}{\alpha}}|\partial_xG_\alpha\big((\tau-s)^{-\frac{1}{\alpha}}(x+h),1\big)-\partial_xG_\alpha\big((\tau-s)^{-\frac{1}{\alpha}}x,1\big)|.
\]
By the definition of $g$, we have
\begin{equation}\label{eq:sindependet1}
\begin{aligned}
I_{21}^{\alpha-1}\leq& \sup_{|h|\leq r}\|f\ast (\rho(s)H\rho(s))\|_{L^{\frac{1}{\alpha-1}}}\leq \sup_{|h|\leq r}\|f\|_{L^1}\|\rho(s)H\rho(s)\|_{L^{\frac{1}{\alpha-1}}}\\
\leq&Ca^2(\tau-s)^{-\frac{1}{\alpha}}s^{-\frac{\alpha-1}{\alpha}}\sup_{|h|\leq r}\left\| \partial_xG_\alpha(\cdot+h,1)-\partial_xG_\alpha(\cdot,1)\right\|_{L^1}.
\end{aligned}
\end{equation}
Due to $G_\alpha(x,t)=t^{-\frac{1}{\alpha}}G(t^{-\frac{1}{\alpha}}x,1)$, we obtain
\begin{equation}\label{eq:secondterm}
\begin{aligned}
I_{22}=&\int_{\mathbb{R}}\int_{\mathbb{R}\setminus B(x,r)}G_\alpha(x-y,t-\tau)|g(y,s)-g(x,s)|^{\frac{1}{\alpha-1}}\di y\di x\\
\leq& \int_{\mathbb{R}}\int_{\mathbb{R}\setminus B(0,r/(t-\tau)^{\frac{1}{\alpha}})}G_\alpha(z,1)|g(x+(t-\tau)^{\frac{1}{\alpha}}z,s)-g(x,s)|^{\frac{1}{\alpha-1}}\di z\di x\\
\leq &2\|g(s)\|_{L^{\frac{1}{\alpha-1}}}^{\frac{1}{\alpha-1}}\int_{\mathbb{R}\setminus B(0,r/(t-\tau)^{\frac{1}{\alpha}})}G_\alpha(z,1)\di z.
\end{aligned}
\end{equation}
From \eqref{eq:kell2}, we know $\|\partial_x G_\alpha(\cdot,t)\|_{L^1}=t^{-\frac{1}{\alpha}}\|\partial_xG_\alpha(\cdot,1)\|_{L^1}.$
By Young's convolution inequality, we obtain
\begin{equation}\label{eq:sindependet0}
\begin{aligned}
\|g(s)\|_{L^{\frac{1}{\alpha-1}}}\leq &\|\partial_xG_\alpha(\cdot,\tau-s)\|_{L^1}\|\rho(s)H\rho (s)\|_{L^{\frac{1}{\alpha-1}}}\\
\leq& Ca^2(\tau-s)^{-\frac{1}{\alpha}}s^{-\frac{\alpha-1}{\alpha}}.
\end{aligned}
\end{equation}
Combining \eqref{eq:I2}-\eqref{eq:sindependet0}, we obtain
\begin{equation}
\begin{aligned}
I_2\leq &\int_0^\tau\|G_\alpha(\cdot,t-\tau)\ast g(s)-g(s)\|_{L^{\frac{1}{\alpha-1}}}\di s\leq \int_0^\tau(I_{21}+I_{22})^{\alpha-1}\di s\\
\leq &Ca^2\int_0^\tau (\tau-s)^{-\frac{1}{\alpha}}s^{-\frac{\alpha-1}{\alpha}}\di s\sup_{|h|\leq r}\|\partial_xG_\alpha(\cdot+h,1)-\partial_xG_\alpha(\cdot,1)\|_{L^1}\\
&\qquad\qquad+Ca^2\int_0^\tau (\tau-s)^{-\frac{1}{\alpha}}s^{-\frac{\alpha-1}{\alpha}}  \di s \left( \int_{\mathbb{R}\setminus B(0,r/(t-\tau)^{\frac{1}{\alpha}})}G_\alpha(z,1)\di z\right)^{\alpha-1}.
\end{aligned}
\end{equation}
By \cite[Lemma 4.3]{brezis2010functional}, letting $t\to\tau$ first and then $r\to0$, we have $I_2\to0$.
\end{proof}

\section{Proof of \eqref{eq:claim1}}\label{app:proofclaim1}
\begin{proof}[Proof of \eqref{eq:claim1}]
We prove \eqref{eq:claim1} for $n$ big enough. Notice that
\[
\mu=\frac{n}{\alpha}+a+b-1.
\]
We have
\begin{align*}
&\sum_{m=2}^{n-1} n^{\mu} m^{m-\delta}  \left(\frac{1}{m-1}\right)^{\frac{m}{\alpha}+b}=\sum_{m=2}^{n-1} m^{m-\delta}  \left(\frac{1}{m-1}\right)^{\frac{m}{\alpha}+b}n^{\frac{n}{\alpha}+a+b-1}\\
=&\sum_{m=2}^{n-1} \left(\frac{m}{n}\right)^{m}\left(\frac{n}{m-1}\right)^{\frac{m}{\alpha}}\frac{1}{m^\delta (m-1)^b}n^{\frac{n}{\alpha}+a+b-1+m-\frac{m}{\alpha}}.
\end{align*}
Because there exists some constant $M$ independent of $n$ such that
\[
\left(\frac{m}{n}\right)^{m}\left(\frac{n}{m-1}\right)^{\frac{m}{\alpha}}\leq \left(\frac{m}{n}\right)^{\frac{m}{\alpha}}\left(\frac{n}{m-1}\right)^{\frac{m}{\alpha}}\leq \left(1+\frac{1}{m-1}\right)^{\frac{m}{\alpha}}\leq M,
\]
we have
\begin{align*}
&\sum_{m=2}^{n-1} n^{\mu} m^{m-\delta}  \left(\frac{1}{m-1}\right)^{\frac{m}{\alpha}+b}\leq M\sum_{m=2}^{n-1} \frac{1}{m^\delta (m-1)^b}n^{\frac{n}{\alpha}+a+b-1+m-\frac{m}{\alpha}}\\
=& Mn^{n-\delta}\sum_{m=2}^{n-1} \frac{1}{m^\delta (m-1)^b}n^{-(n-m)(1-\frac{1}{\alpha})+a+b-1+\delta}\\
=& Mn^{n-\delta}\left[\sum_{2\leq m\leq \frac{n}{2}} \frac{n^{a+b-1+\delta}}{m^\delta (m-1)^b}n^{-(n-m)(1-\frac{1}{\alpha})}+\sum_{\frac{n}{2}<m\leq n-1} \frac{n^{a+b-1+\delta}}{m^\delta (m-1)^b}n^{-(n-m)(1-\frac{1}{\alpha})}\right]\\
=:&Mn^{n-\delta}(I_1+I_2).
\end{align*}
To prove \eqref{eq:claim1}, it suffices to prove $I_1\leq \frac{1}{2M}$ and $I_2\leq \frac{1}{2M}$ for $n$ big enough. For simplicity, we assume $n$ be an even number. For $I_1$, we have
\begin{align*}
I_1=\sum_{2\leq m\leq \frac{n}{2}} \frac{n^{a+b-1+\delta}}{m^\delta (m-1)^b}n^{-(n-m)(1-\frac{1}{\alpha})}\leq n^{a+b-1+\delta}\sum_{2\leq m\leq \frac{n}{2}} n^{-(n-m)(1-\frac{1}{\alpha})}\leq \frac{2n^{a+b-1+\delta}}{n^{\frac{n}{2}(1-\frac{1}{\alpha})}}\leq \frac{1}{2M}.
\end{align*}
For $I_2$, we have
\begin{align*}
I_2=&\sum_{\frac{n}{2}<m\leq n-1} \frac{n^{a+b-1+\delta}}{m^\delta (m-1)^b}n^{-(n-m)(1-\frac{1}{\alpha})}\leq \sum_{\frac{n}{2}<m\leq n-1} \frac{2^{\delta+b} n^{a+b-1+\delta}}{n^\delta (n-2)^b}n^{-(n-m)(1-\frac{1}{\alpha})}\\
\leq& \frac{2^{\delta+b} n^{a+b-1}}{(n-2)^b} \sum_{\frac{n}{2}<m\leq n-1}n^{-(n-m)(1-\frac{1}{\alpha})}.
\end{align*}
For $n$ big enough, we have
\[
\sum_{\frac{n}{2}<m\leq n-1}n^{-(n-m)(1-\frac{1}{\alpha})}\leq \frac{2}{n^{1-\frac{1}{\alpha}}},
\]
and hence 
\[
I_2\leq 2^{\delta+b+1}\left(\frac{n}{n-2}\right)^b\frac{ 1}{n^{1-a}n^{1-\frac{1}{\alpha}}} \leq \frac{1}{2M}.
\]

\end{proof}

\section{Proof of  Theorem \ref{thm:analyticalpha1} (\lowercase{$\mathrm{iii}$})}\label{apd:convergence}
\begin{proof}[Proof of Theorem \ref{thm:analyticalpha1} (iii)]
to prove  the convergence result (iii). Recall formula \eqref{eq:fz}.
For fixed $z\in\occ$, denote 
\[
e^{-\gamma t}Z^{-1}(z,t)=:z_r(t)+iz_i(t).
\]
Next, we prove that $z_r(t)+iz_i(t)$ converges to a point $w=z_r^*+iz_i^*\in \mathbb{C}_+$ as $t\to\infty$. To this end, we first prove
$|z_r(t)|$ and $z_i(t)$ are all bounded from above and below uniformly in time $t$.

Because
\[
f_0(Z^{-1}(z,t))=R\rho_0(e^{\gamma t}z_r(t),e^{\gamma t}z_i(t))-i P\rho_0(e^{\gamma t}z_r(t),e^{\gamma t}z_i(t)),
\]
by \eqref{eq:fz}, we have
\begin{multline}\label{eq:z1}
z=z_r(t)+R\rho_0(e^{\gamma t}z_r(t),e^{\gamma t}z_i(t))\frac{\sinh\gamma t}{\gamma}\\
+i\left[z_i(t)-P\rho_0(e^{\gamma t}z_r(t),e^{\gamma t}z_i(t))\frac{\sinh\gamma t}{\gamma}+\frac{\nu}{\gamma}(e^{-\gamma t}-1)\right].
\end{multline}
Due to $-P\rho_0(e^{\gamma t}z_r(t),e^{\gamma t}z_i(t))\frac{\sinh\gamma t}{\gamma}+\frac{\nu}{\gamma}(e^{-\gamma t}-1)\leq 0$, we have
\[
z_i(t)\geq\Im(z)>0.
\]
Moreover, we have
\begin{align*}
\Im(z)&=z_i(t)-P\rho_0(e^{\gamma t}z_r(t),e^{\gamma t}z_i(t))\frac{\sinh\gamma t}{\gamma}+\frac{\nu}{\gamma}(e^{-\gamma t}-1)\\
&=z_i(t)-\int_{\mathbb{R}}\frac{e^{\gamma t}z_i(t)}{e^{2\gamma t}z_i^2(t)+(e^{\gamma t}z_r(t)-s)^2}\rho_0(s)\di s\frac{\sinh\gamma t}{\gamma}+\frac{\nu}{\gamma}(e^{-\gamma t}-1)\\
&\geq z_i(t)-\frac{1}{\gamma}\int_{\mathbb{R}}\frac{e^{2\gamma t}z_i(t)}{2e^{2\gamma t}z_i^2(t)+2(e^{\gamma t}z_r(t)-s)^2}\rho_0(s)\di s+\frac{\nu}{\gamma}(e^{-\gamma t}-1)\\
&\geq z_i(t)-\frac{1}{2\gamma z_i(t)}+\frac{\nu}{\gamma}(e^{-\gamma t}-1),
\end{align*}
which implies
\[
z_i(t)\leq \Im(z)+1/\sqrt{2\gamma}-\frac{\nu}{\gamma}(e^{-\gamma t}-1).
\]
Hence, $z_i(t)$ is bounded as
\[
0<\Im(z)\leq z_i(t)\leq \Im(z)+1/\sqrt{2\gamma}-\frac{\nu}{\gamma}(e^{-\gamma t}-1).
\]
Next, we prove
\[
\sup_{t\geq0}|z_r(t)|<+\infty.
\]
We prove this by a contradiction argument. If there exists $t_n\to\infty$ such that $z_r(t_n)\to\infty$, then by the dominated convergence theorem we have
\[
R\rho_0(e^{\gamma t_n}z_r(t_n),e^{\gamma t_n}z_i(t_n))\frac{\sinh \gamma t_n}{\gamma}=\frac{1}{\pi}\int_{\mathbb{R}}\frac{(e^{\gamma t_n}z_r(t_n)-s)\rho_0(s)}{e^{2\gamma t_n}z_i^2(t_n)+(e^{\gamma t_n}z_r(t_n)-s)^2}\di x\frac{\sinh \gamma t_n}{\gamma}\to0,~~n\to\infty.
\]
By \eqref{eq:z1}, we obtain a contradiction that
\[
\Re(z)=z_r(t_n)+R\rho_0(e^{\gamma t_n}z_r(t_n),e^{\gamma t_n}z_i(t_n))\frac{\sinh \gamma t_n}{\gamma}\to\infty.
\]

Since  $|z_r(t)|$ and $z_i(t)$ are bounded, there exist $t_n\to\infty$  and two constant $z_r^*$, $z_i^*>0$ such that
\[
z_r(t_n)\to z_r^*,~~z_i(t_n)\to z_i^*,~~n\to\infty.
\]
For any $s\in\mathbb{R}$, we have
\[
\frac{e^{\gamma t_n}z_r(t_n)-s}{e^{2\gamma t_n}z_i^2(t_n)+(e^{\gamma t_n}z_r(t_n)-s)^2}\sinh\gamma t_n\to\frac{z_r^*}{2(z_i^*)^2+2(z_r^*)^2},~~n\to\infty.
\]
Then, by the dominated convergence theorem we have
\begin{align*}
&\lim_{n\to\infty}R\rho_0(e^{\gamma t_n}z_r(t_n),e^{\gamma t_n}z_i(t_n))\frac{\sinh\gamma t_n}{\gamma}\\
=&\frac{1}{\gamma\pi}\lim_{n\to\infty}\int_{\mathbb{R}}\frac{e^{\gamma t_n}z_r(t_n)-s}{e^{2\gamma t_n}z_i^2(t_n)+(e^{\gamma t_n}z_r(t_n)-s)^2}\rho_0(s)\di s\sinh\gamma t_n\\
=&\frac{1}{2\gamma\pi}\frac{z_r^*}{(z_i^*)^2+(z_r^*)^2}.
\end{align*}
Similarly, we have
\begin{align*}
\lim_{n\to\infty}P\rho_0(e^{\gamma t_n}z_r(t_n),e^{\gamma t_n}z_i(t_n))\frac{\sinh \gamma t_n}{\gamma}=\frac{1}{2\gamma\pi}\frac{z_i^*}{(z_i^*)^2+(z_r^*)^2}.
\end{align*}
Let $w:=z_r^*+iz_i^*$. Then, let $t=t_n$ in \eqref{eq:z1} and sending $n\to\infty$ gives
\[
z=w+\frac{1}{2\gamma\pi}\frac{z_r^*-iz_i^*}{(z_i^*)^2+(z_r^*)^2}-i\frac{\nu}{\gamma}=w+\frac{1}{2\gamma\pi w}-i\frac{\nu}{\gamma}.
\]
We obtain a unique solution in $\mathbb{C_+}$ (with positive imaginary part):
\[
w=\frac{1}{\gamma\pi z+i\nu\pi-\sqrt{(\gamma\pi z+i\nu\pi)^2-2\gamma\pi}}.
\]
Hence, we have
\[
e^{-\gamma t}Z^{-1}(z,t)=z_r(t)+iz_i(t)\to w=\frac{1}{\gamma\pi z+i\nu\pi-\sqrt{(\gamma\pi z+i\nu\pi)^2-2\gamma\pi}},~~t\to\infty.
\]
By \eqref{eq:fz} and using the dominated convergence theorem again, we have
\begin{align*}
f(z,t)=&f_0(Z^{-1}(z,t))e^{\gamma t}\\
=&\int_{\mathbb{R}}\frac{e^{2\gamma t}z_r(t)-s}{e^{2\gamma t}z_i^2(t)+[e^{\gamma t}z_r(t)-s]^2}\rho_0(s)\di s-i\int_{\mathbb{R}}\frac{e^{2\gamma t}z_i(t)}{e^{2\gamma t}z_i^2(t)+[e^{\gamma t}z_r(t)-s]^2}\rho_0(s)\di s\\
&\to \frac{z_r^*-iz_i^*}{(z_i^*)^2+(z_r^*)^2}=\frac{1}{w}=\gamma\pi z+i\nu\pi-\sqrt{(\gamma\pi z+i\nu\pi)^2-2\gamma\pi},\quad t\to\infty.
\end{align*}
Let $z=x+iy$, $y\geq0$ and the imaginary part is given by
\begin{multline}
\Im(\frac{1}{w})=\pi\nu\\
-\frac{\sqrt{\sqrt{[\pi^2\gamma^2x^2-\pi^2(\gamma y+\nu)^2-2\pi\gamma]^2+4\pi^4\gamma^2x^2(\gamma y+\nu)^2}-[\pi^2\gamma^2x^2-\pi^2(\gamma y+\nu)^2-2\pi\gamma]}}{\sqrt{2}}<0.
\end{multline}
Consider the trace on the real line $x\in\mathbb{R}$ and $y=0$, and we obtain
\begin{align*}
\rho(x,t)=-\Im(f(x,t))\to\rho_\infty(x),\quad t\to \infty.
\end{align*}
For $\nu=0$, it is the same as in \cite[Eq. (2.15)]{gao2020large}. For $\gamma=0$, we have $\rho_\infty=0.$
\end{proof}

\bibliographystyle{plain}
\bibliography{bibofHilbert}

\end{document}